\documentclass[12pt]{amsart}
\usepackage{graphicx}
\usepackage{amscd,amssymb,amsthm,amsmath,amssymb,mathrsfs,enumerate,amsfonts}

\usepackage{tikz}
\usepackage{tikz-cd}

\usepackage[margin=2cm]{geometry}

\usepackage[style=alphabetic]{biblatex} 
\newcommand{\F}{\mathbb{F}}
\renewcommand{\P}{\mathbb{P}}
\newcommand{\Z}{\mathbb{Z}}
\renewcommand{\O}{\mathcal{O}}
\newcommand{\C}{\mathbb{C}}

\newcommand{\A}{\mathbb{A}}

\newcommand{\Q}{\mathbb{Q}}
\newcommand{\wt}[1]{\widetilde{#1}}

\newcommand{\e}{\varepsilon}

\newtheorem{theorem}{Theorem}
\newtheorem{lemma}[theorem]{Lemma}
\newtheorem{cor}[theorem]{Corollary}

\theoremstyle{remark}
\newtheorem{remark}[equation]{Remark}

\theoremstyle{definition}
\newtheorem{definition}[theorem]{Definition}

\title{K-moduli of Fano threefolds of rank 2
and degree 28}
\author{Joseph Malbon}

\begin{document}

\begin{abstract}
    We find all possible K-polystable degenerations of smooth K-polystable Fano threefolds in the family \textnumero2.21, and describe the component of the moduli space $M^\text{K-ps}_{3,28}$ which parametrises this family.
\end{abstract}

\maketitle

\section{Introduction}

By now, K-stability has proved to be a very useful stability condition for Fano varieties as it enables the construction of projective moduli spaces of K-polystable Fano varieties, known as \emph{K-moduli spaces}. Considerable work has been devoted to explicitly describing these spaces and the objects they parametrise. For instance, in \cite{CFFK, CT, EJP, LX, LZ} the K-moduli components of families \textnumero1.13, \textnumero2.15, \textnumero3.3, \textnumero3.10, and \textnumero4.1 are described, respectively.

In \cite{book} it is shown that a general member of family \textnumero2.21 is K-stable, which implies that the corresponding K-moduli component $M^{\text{\textnumero} 2.21}$ is nonempty; moreover, it is a projective surface. The goal of this paper (cf. Theorem \ref{theorem:main}) is to describe all K-polystable Fano threefolds which arise as degenerations of members of family \textnumero2.21, and to give a complete description of the surface $M^{\text{\textnumero} 2.21}$.

\begin{definition}
    Fano threefolds in the family \textnumero2.21 are obtained by blowing up a smooth quadric hypersurface along a rational normal quartic curve. By the classification of smooth Fano threefolds in \cite{MM}, they are characterised by the invariants
    $$(-K_X)^3=28,\quad \rho(X)=2.$$  
    Let $M^{\text{K-ps}}_{3,28}$ be the moduli space of K-polystable $\Q$-Fano threefolds of volume $28$, whose existence is the celebrated result attributed to many people (cf. \cite{ABHLX20,BHLLX21,BLX19,BX19,CP21,Jia20,LWX21,LXZ22,Xu20,XZ20,XZ21}). Let $M^{\text{\textnumero} 2.21}$ denote the irreducible component of $M^{\text{K-ps}}_{3,28}$ whose general point parametrises a smooth member of family \textnumero2.21, and let $M^\text{sm}\subset M^\text{\textnumero 2.21}$ be the subset parametrising smooth members.
\end{definition}

In \cite{K-STAB} it was shown that for members of the family \textnumero2.21, K-polystability is equivalent to GIT-polystability for the action of $\mathrm{PGL}_2(\C)$ on the natural parameter space:
\begin{equation}
    \label{eq:parameter space}
    \P\bigg(H^0\big(\P^4, \mathcal{I}_{C_4}(2)\big)\bigg)
\end{equation}
It follows that the moduli space $M^{\mathrm{sm}}$ is isomorphic to an open subset of the corresponding GIT-quotient. This quotient gives a modular compactification of $M^{\mathrm{sm}}$ which, whilst being projective, parametrises varieties which are not K-polystable. We thus give an alternative compactification of $M^{\mathrm{sm}}$ which exclusively parametrises K-polystable degenerations of family \textnumero2.21. Namely, we show the following:
\begin{theorem}
\label{theorem:main}
    The boundary $\partial M^\text{\textnumero2.21}:= M^\text{\textnumero 2.21}\setminus M^\text{sm}$ is a divisor on $M^\text{\textnumero 2.21}$ which has three components $\Delta_1,\Delta_2,\Delta_3$, each isomorphic to $\P^1$, with $\Delta_1\cap\Delta_3=\varnothing$ and $\Delta_2$ meeting $\Delta_1$ and $\Delta_3$ each in one point. Moreover, the boundary admits the following stratification: 
    $$\partial M^\text{\textnumero2.21}=\Delta_1^\circ\sqcup\Delta_{12}\sqcup \Delta_2^\circ\sqcup\Delta_{23}\sqcup\Delta_3^\circ,$$
    where:
    \begin{align*}
        \Delta_{12}&=\Delta_1\cap\Delta_2,\\ 
        \Delta_{23}&=\Delta_2\cap\Delta_3 ,\\ 
        \intertext{and:} \\
        \Delta_1^\circ&=\Delta_1\setminus\Delta_{12},\\
        \Delta_2^\circ&=\Delta_2\setminus(\Delta_{12}\cup\Delta_{23}),\\
        \Delta_3^\circ&=\Delta_3\setminus \Delta_{23}.
    \end{align*}   
    These strata have the following modular interpretations:
    \begin{itemize}
        \item $\Delta_1^\circ$ parametrises threefolds of the form $\mathrm{Bl}_{C_4}Q^\mathrm{iso}$, where $Q^\mathrm{iso}$ is a quadric threefold with an isolated singularity; \\
        \item $\Delta_2^\circ$ parametrises threefolds of the form $\mathrm{Bl}_{C_{1,2,1}}Q^\mathrm{sm}$, where $Q^\mathrm{sm}$ is a smooth quadric threefold and $C_{1,2,1}$ is a union of two disjoint lines and a conic, with each line meeting the conic transversely in one point: 
        \begin{figure}[h]
\centering
\begin{tikzpicture}[scale=1.2]
  \draw[thick, domain=-2:2, smooth, variable=\x]
    plot ({\x},{0.35*\x*\x});

  \draw[thick] (-1.8,-0.6) -- (-0.9,1.4);

  \draw[thick] (1.8,-0.6) -- (0.9,1.4);

  \node at (0,-0.9) {$C_{1,2,1}$};
\end{tikzpicture}
\end{figure}
        \item $\Delta_3^\circ$ parametrises threefolds of the form $\mathrm{Bl}_{C_{2,2}}Q^\mathrm{sing}$, where for every point of $\Delta_3^\circ$ except one, we have $Q^\mathrm{sing}\cong Q^\mathrm{iso}$, and $C_{2,2}$ is a union of two conics meeting transversely in a single point:

\begin{figure}[h]
\centering
\begin{tikzpicture}[scale=1.2]
\draw[thick, domain=-1:3, smooth, variable=\x]
    plot ({\x},{0.35*(\x-1)^2});
\draw[thick, domain=-3:1, smooth, variable=\x]
    plot ({\x},{0.35*(\x+1)^2});
    \node at (0,-0.7) {$C_{2,2}$};
\end{tikzpicture}
\end{figure}
    and at the special point, $Q^\mathrm{sing}$ is isomorphic to an irreducible quadric with a non-isolated singularity $Q^\mathrm{niso}$, and the curve $C_{2,2}^\mathrm{nred}$ is a non-reduced scheme supported on a conic $C_{2,2}^\mathrm{red}$. Moreover, there exists a cubic ruled surface $S\cong\F_1\subset \P^4$ containing $C_{2,2}^\mathrm{nred}$ such that, in $\mathrm{Div}(S)$, $C_{2,2}^\mathrm{nred}=2C_{2,2}^\mathrm{red}$:
\begin{figure}[h]
\centering
\begin{tikzpicture}[scale=1.2]
  \draw[line width=8pt, gray, opacity=0.2, domain=-2:2, smooth, variable=\x]
    plot ({\x},{0.35*\x*\x});

  \draw[thick, black, domain=-2:2, smooth, variable=\x]
    plot ({\x},{0.35*\x*\x});

    \node at (0,-0.7) {$C_{2,2}^\mathrm{nred}$};
\end{tikzpicture}
\end{figure}
    \item $\Delta_{12}$ parametrises the unique K-polystable Fano threefold $\mathrm{Bl}_{C_{1,2,1}}Q^\mathrm{iso}$;\\
    \item $\Delta_{23}$ parametrises the unique K-polystable Fano threefold $\mathrm{Bl}_{C_{1,1,1,1}}Q^\mathrm{iso}$, where $C_{1,1,1,1}$ is a union of 4 lines meeting according to the $A_4$-Dynkin diagram:
\begin{figure}
\begin{tikzpicture}[scale=1.2]
  \draw[thick] (-3.4,-0.4) -- (-1.1,1.4);
  \draw[thick] (-2.0,1.4) -- (0.3,-0.4);
  \draw[thick] (-0.3,-0.4) -- (2.0,1.4);
  \draw[thick] (1.1,1.4) -- (3.4,-0.4);

  \node at (0,-1.0) {$C_{1,1,1,1}$};
\end{tikzpicture} 
\end{figure}
\end{itemize}
\vspace{0.5cm}
In each of the above cases, the singularities of the quadric are disjoint from the quartic curve, and moreover this curve is not contained in a hyperplane in $\P^4$.
\end{theorem}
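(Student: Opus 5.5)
Fix the rational normal quartic $C_4\subset\P^4$ with its $\mathrm{PGL}_2(\C)=\mathrm{Aut}(C_4)$-action. The plan is to view $M^{\text{\textnumero}2.21}$ through the GIT quotient of the parameter space \eqref{eq:parameter space}, and then to repair the boundary by replacing non-K-polystable limits with their K-polystable replacements. As $\mathrm{SL}_2$-representations, $H^0(\P^4,\O(2))=\mathrm{Sym}^2(V_4)=V_8\oplus V_4\oplus V_0$ and $H^0(C_4,\O(8))=V_8$, so $H^0(\mathcal I_{C_4}(2))\cong V_4\oplus V_0$. The parameter space is therefore $\P(V_4\oplus V_0)$, a quadric being given by a pair $(f,\lambda)$ with $f$ a binary quartic. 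The GIT quotient $\P(V_4\oplus V_0)/\!\!/\mathrm{SL}_2$ is a weighted projective plane, with coordinates $\lambda$, $I_2(f)$ and $I_3(f)$. By the result of \cite{K-STAB}, $M^{\mathrm{sm}}$ is the open subset where the quadric $Q_{(f,\lambda)}$ is smooth. A first step is to compute $\det Q_{(f,\lambda)}$ as a weighted-homogeneous polynomial in $(\lambda,I_2,I_3)$. Its vanishing locus is the GIT discriminant curve, and the boundary of the GIT compactification consists of this curve together with the strictly semistable and polystable points of special type, e.g.\ $f$ with a double root.

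Next I would classify the GIT-polystable points with $Q$ singular. The aim is to show that when $\mathrm{Sing}(Q)\cap C_4=\varnothing$ and $Q$ has corank one, the pair $(Q^{\mathrm{iso}},C_4)$ is a K-polystable limit. Here I would use a deformation argument, or the openness of K-semistability plus $\delta$-invariant estimates via Abban--Zhuang as in \cite{book}. This part of the discriminant curve is $\Delta_1$. For the remaining points, where the vertex lies on $C_4$, the quadric has higher corank, or $f$ degenerates, I would locate the K-polystable replacement by a test configuration. One uses a $\mathbb G_m$-equivariant degeneration of the pair $(Q,C)$ inside the Hilbert scheme of quartic curves of degree $4$ and arithmetic genus $0$, moving $C_4$ off the singular point. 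This produces reducible curves $C_{1,2,1}$, $C_{2,2}$ and $C_{1,1,1,1}$ on quadrics with their singular point away from the curve. Concretely, I would blow up the bad point in the GIT quotient, in a Kirwan-type partial desingularisation. The exceptional curves, once the K-polystable objects are attached, become $\Delta_2$ and $\Delta_3$. Their $\P^1$ parameters would be, respectively, the relative position of the conic and the two lines up to $\mathrm{Aut}$, and the cross-ratio-type modulus of the two conics on $Q^{\mathrm{iso}}$.

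To justify each stratum I would prove the following:
(a) Each listed $\mathrm{Bl}_C Q$ is K-polystable. I would do this using its reductive automorphism group and equivariant $\delta$-estimates, or by reducing to a torus-invariant computation of the Futaki invariant. For $\Delta_{12}$ and $\Delta_{23}$ these are the unique points with extra $\mathbb G_m$-symmetry.
(b) Each arises as a limit of smooth members, via explicit one-parameter families.
(c) These exhaust the K-polystable limits. By separatedness of K-moduli, together with a local computation (Luna étale slice) of the K-moduli stack at the $\mathbb G_m$-fixed points $\Delta_{12}$ and $\Delta_{23}$, the space is locally a quotient of $\A^2$ by $\mathbb G_m$. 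This shows that exactly two boundary branches meet there, giving the incidence pattern and $\Delta_1\cap\Delta_3=\varnothing$.
The special point of $\Delta_3$ with $Q^{\mathrm{niso}}$ and $C_{2,2}^{\mathrm{nred}}$ would be found as the limit where the two conics collide. There I would verify directly that the scheme is $2C^{\mathrm{red}}$ on a cubic scroll $\F_1$ containing it.

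The main obstacle I expect is step (c): excluding other K-polystable degenerations, and in particular identifying correctly the replacement over the non-GIT-stable point and its exceptional curve. For this I would first try the local VGIT/wall-crossing approach at $\Delta_{23}$ and $\Delta_{12}$, combined with the fact that $M^{\text{\textnumero}2.21}$ is a projective surface birational to the GIT quotient. Comparing the resulting boundary divisors then pins down the components. Finally, non-degeneracy of the curve, i.e.\ not lying in a hyperplane, and disjointness from $\mathrm{Sing}(Q)$ are read off from the explicit normal forms.
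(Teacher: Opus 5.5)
Your identification $\overline{M}^{\mathrm{GIT}}\cong\P(1,2,3)$ is fine. The step that fails is the claim that corank-one quadrics with $\mathrm{Sing}(Q)\cap C_4=\varnothing$ give K-polystable pairs forming $\Delta_1$, and with it your picture of $\Delta_3$ as an exceptional curve.

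\textbf{The discriminant has two components.} Identify $V_4\oplus V_0\cong\mathrm{Sym}^2\C^3$ under $\mathrm{SO}_3\cong\mathrm{PGL}_2$. In the paper's model, with $C_4=S\cap\{\mathrm{tr}A=0\}$, the quadric attached to a symmetric matrix $B'$ is $\{\mathrm{tr}(B'\,\mathrm{adj}A)=0\}\cap\{\mathrm{tr}A=0\}$. In terms of the eigenvalues of $B'$, its determinant is, up to a constant,
$$\alpha\beta\gamma\,(\alpha^2+\beta^2+\gamma^2-2\alpha\beta-2\beta\gamma-2\gamma\alpha).$$
So the discriminant has two components, meeting only at the image of $\mathrm{diag}(1,1,0)$, where the vertex line meets $C_4$. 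Generic points of both components have $Q$ of corank one with vertex off $C_4$.

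\textbf{One component is only strictly K-semistable.} On $\{\det B'=0\}$, say $B'=\mathrm{diag}(\alpha,\beta,0)$, the vertex $[0:1:0:0:0:0]$ lies on the bisecant of $C_4$ through the images of $[1:\pm i:0]$. So the node of $X$ lies on the exceptional divisor of the other extremal contraction, which is $\mathrm{Bl}_{C_{2,2}}Q^{\mathrm{sm}}$. These GIT-stable pairs are not K-polystable. Write $\wt{H}_B$ for the pullback of $\{\mathrm{tr}(BA)=0\}$. The family $\wt{H}_{\mathrm{diag}(1,1,s)}\cap\tilde{\chi}(\wt{H}_{B'})$ is isotrivial for $s\neq0$, via $\mathrm{diag}(1,1,\sqrt{s})$. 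At $s=0$ it is the paper's fibre over $[\sqrt{\alpha}:\sqrt{\beta}:0]$, namely $\mathrm{Bl}_{C_{2,2}}Q^{\mathrm{iso}}$, which has two nodes rather than one. Hence no $\delta$-estimate can prove your claim.

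\textbf{Consequence for the boundary.} $\Delta_3$ is not exceptional. It is the strict transform of the component $\{\det B'=0\}$, with polystable replacements attached pointwise. The map $T/\Gamma\to\P^2/\Gamma\cong\P(1,2,3)$, which agrees with your GIT quotient via $[a:b:c]\mapsto[a^2:b^2:c^2]$, contracts only $\Delta_2$. A boundary with two exceptional curves over one point therefore has the wrong shape. It would even have Picard number $3$ instead of $2$.

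\textbf{The global identification is missing.} Even with the correct list of objects, your plan has no mechanism for proving that each $\Delta_i\cong\P^1$ with the stated incidences. That needs an isomorphism of the moduli component with an explicit surface. The paper obtains it in four steps:
\begin{enumerate}
\item It builds proper flat families over all of $T$ with every fibre K-polystable, using the deformed adjugates $\chi_t$ to fill in over the curves $E_i$. Your exhaustion step (c) then comes for free from properness of $T$ and separatedness.
\item It shows that fibre-isomorphism is given exactly by $\Gamma$-orbits.
\item It proves the moduli space is normal via unobstructedness: Namikawa's result for the nodal fibres, and a Koszul plus Akizuki--Nakano argument on $\wt{\P}^5$ for $\mathrm{Bl}_{C_{2,2}^{\mathrm{nred}}}Q^{\mathrm{niso}}$.
\item It applies Zariski's Main Theorem.
\end{enumerate}
Normality appears nowhere in your proposal. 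Without it, a bijective morphism from a normal surface onto the moduli space need not be an isomorphism.

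\textbf{The Luna-slice substitute fails.} A local model $\A^2/\!\!/\mathbb{G}_m$ has dimension at most one, so it cannot describe a neighbourhood in a surface. Also, $\Delta_{12}$ and $\Delta_{23}$ are not singled out by $\mathbb{G}_m$-symmetry. The torus $\mathrm{diag}(1,\mu,1,\mu^{-1},1,1)$ acting on $(x_0,\dots,x_5)$ preserves the hyperplane, the quadric and the ideal $(g_0(0),\dots,g_5(0))$ of every fibre over $E_i$. So every point of $\Delta_2$ carries a $\mathbb{G}_m$-action.
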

The following corollary is immediate:
\begin{cor}
    Let $X$ be a $\Q$-Fano threefold which is K-polystable and $\Q$-Gorenstein smoothable to a smooth Fano threefold in \textnumero2.21. Then $X$ is the blow-up of a quadric hypersurface along a connected curve in $\P^4$ of degree 4 and arithmetic genus 0.
\end{cor}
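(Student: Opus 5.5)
The corollary should follow from Theorem~\ref{theorem:main} once we show that $[X]$ lies in $M^{\text{\textnumero}2.21}$, and then check each stratum. The plan has three steps.

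\textbf{Step 1: $[X]$ lies in $M^{\text{\textnumero}2.21}$.} Let $\mathcal{X}\to T$ be a $\Q$-Gorenstein smoothing over a pointed smooth curve $(T,0)$, with $\mathcal{X}_0\cong X$ and $\mathcal{X}_t$ a smooth member of \textnumero2.21 for $t\neq 0$.
\begin{itemize}
\item Since $X$ is K-polystable, in particular K-semistable, openness of K-semistability gives, after shrinking $T$, that every fibre is K-semistable.
\item The family therefore induces a morphism $T\to M^{\text{K-ps}}_{3,28}$ sending $0\mapsto[X]$. Here we use that $X$ is K-polystable, so it is its own polystable representative.
\item Each $\mathcal{X}_t$ with $t\neq0$ is a K-semistable smooth member of \textnumero2.21. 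Members of \textnumero2.21 form an irreducible family (quadrics containing a rational normal quartic), and a general member is K-stable. Hence the K-semistable members form an irreducible open subset of the stack. Its image is irreducible and contains $M^\text{sm}$ as a dense subset, so it lies in $M^{\text{\textnumero}2.21}$.
\item $M^{\text{\textnumero}2.21}$ is closed, being an irreducible component of a projective, hence separated, moduli space. So the limit point $[X]$ lies in $M^{\text{\textnumero}2.21}$.
\end{itemize}

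\textbf{Step 2: apply the theorem.} Either $[X]\in M^\text{sm}$, and then $X=\mathrm{Bl}_{C_4}Q$ with $C_4$ a rational normal quartic on a smooth quadric. Or $[X]$ lies in one of the strata $\Delta_1^\circ,\Delta_{12},\Delta_2^\circ,\Delta_{23},\Delta_3^\circ$. By Theorem~\ref{theorem:main}, $X$ is then the blow-up of a quadric threefold along one of the curves $C_4$, $C_{1,2,1}$, $C_{2,2}$, $C_{2,2}^\mathrm{nred}$ or $C_{1,1,1,1}$.

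\textbf{Step 3: each curve is connected, of degree $4$ and arithmetic genus $0$.} For the reduced curves, connectedness and degree $4$ are visible from the configurations. The arithmetic genus is computed from the additivity formula $p_a(A\cup B)=p_a(A)+p_a(B)+\#(A\cap B)-1$ for transverse intersections, applied to trees of smooth rational curves:
\begin{itemize}
\item $C_{1,2,1}$ is a chain line--conic--line with two nodes, so $p_a=0$.
\item $C_{2,2}$ is two conics meeting in one node, so $p_a=0$.
\item $C_{1,1,1,1}$ is an $A_4$-chain of four lines, so $p_a=0$.
\end{itemize}

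For $C_{2,2}^\mathrm{nred}$, use the Cartier divisor $2C$ on $S\cong\F_1$, where $C=C_{2,2}^\mathrm{red}$ is a conic.
\begin{itemize}
\item Adjunction on $S$ gives $p_a(2C)=1+\tfrac12(4C^2+2K_S\cdot C)$.
\item Since $C$ is a smooth rational curve, $C^2+K_S\cdot C=-2$. Its degree is $2$ with respect to the hyperplane class $H=e+2f$.
\item A conic on the cubic scroll is a section in the class $e+f$. Then $C^2=1$ and $K_S\cdot C=-3$, so $p_a=1+\tfrac12(4-6)=0$, and the degree is $2\cdot 2=4$.
\item The support is connected.
\end{itemize}
As a uniform alternative, every curve arising here is a flat limit of rational normal quartics with the same Hilbert polynomial $4t+1$. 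This is consistent with each being non-degenerate in $\P^4$, and it gives degree $4$ and $p_a=0$ directly.

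The only step that is not bookkeeping is Step~1. The delicate point is ensuring the moduli map lands in the component $M^{\text{\textnumero}2.21}$ rather than in another component of $M^{\text{K-ps}}_{3,28}$. The irreducibility argument for the K-semistable locus of smooth \textnumero2.21 members, combined with properness, is what I would rely on to handle this.
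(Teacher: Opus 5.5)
Your proposal is correct and follows the paper's route: the paper calls the corollary immediate from Theorem~\ref{theorem:main}, meaning one places $[X]$ in $M^{\text{\textnumero}2.21}$ and reads off the possible centres $C_4$, $C_{1,2,1}$, $C_{2,2}$, $C_{2,2}^\mathrm{nred}$, $C_{1,1,1,1}$ from the stratification. Your Step~1 (openness of K-semistability plus closedness of the component) and Step~3 (the genus count, including the $\F_1$ computation with conic class $e+f$, or equivalently the constant Hilbert polynomial $4t+1$ in the flat families $\mathscr{C}_{ij}$) spell out details the paper leaves implicit.
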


\noindent\textbf{Structure of the paper}. The rest of this paper is dedicated to the proof of Theorem \ref{theorem:main}. Let us give an overview of the proof and a description of the paper. In Section \ref{section:families} we find an open cover $T=\bigcup U_{ij}$, where $T$ is the blow-up of $\P^2$ in 6 points (not in general position), and flat projective morphisms:
$$\big\{\mathscr{X}_{ij}\to U_{ij}\big\}_{i=1,\ldots,6,\ j=1,2}$$
such that:
\begin{enumerate}
    \item For every smooth member $X$ of family \textnumero2.21, there exists $i,j$, and $x\in U_{ij}$ such that the corresponding fibre of $\mathscr{X}_{ij}$ is isomorphic to $X$,
    \item For each $i,j$, $K_{\mathscr{X}_{ij}/ U_{ij}}$ is Cartier,
    \item For each $i_1,j_1$ and $i_2,j_2$, and every $x\in U_{i_1j_1}\cap U_{i_2j_2}$, the fibres of $\mathscr{X}_{i_1j_1}$ and $\mathscr{X}_{i_2j_2}$ over $x$ are isomorphic.
\end{enumerate}

In Section \ref{section:K-polystability}, we show that every fibre of each of the above families is a K-polystable Fano variety with $\big(-K_{(\mathscr{X}_{ij})_x}\big)^3=28$. We prove this in a case-by-case manner, making use of the Abban--Zhuang method of admissible flags to show that for each fibre $X$, the inequality:
$$A_X(D):=1+\mathrm{discrep}_X(D)>S(-K_X;D):=\frac{1}{{\mathrm{vol}(-K_X)}}\int_0^\infty\mathrm{vol}(-K_X-uD)du$$
holds for every $G$-invariant prime divisor $D$ over $X$ for a suitably chosen subgroup $G\subset\mathrm{Aut}(X)$, thereby concluding that $X$ is K-polystable by the Fujita--Li valuative criterion (\cite{Fujita2019}) and its equivariant version (\cite{ZZ}).

This, in conjunction with properties $(1)$ and $(2)$ as above, yields morphisms of stacks:
$$U_{ij}\to\mathscr{M}^\text{\textnumero2.21}$$
for each $i,j$, where $\mathscr{M}^\text{\textnumero2.21}$ denotes the irreducible component of the moduli stack $\mathscr{M}^\text{K-ss}_{3,28}$ (see \cite[Theorem 2.13]{LZ} for the definition) whose general point parametrises a K-stable Fano threefold in family \textnumero2.21. These morphisms descend to morphisms of good moduli spaces:
$$U_{ij}\to M^\text{\textnumero2.21},$$
which by property $(3)$ agree on overlaps, and hence glue to give a morphism
$$T\to M^\text{\textnumero2.21}.$$
Furthermore, we also find an action of a finite group $\Gamma$ on $T$ whose orbits give the fibre-isomorphism equivalence relation for each family $\mathscr{X}_{ij}\to U_{ij}$, thereby obtaining a bijective morphism:
$$T/\Gamma\to M^\text{\textnumero2.21}.$$
Now, to conclude that this morphism is an isomorphism, we must show that $M^{\text{K-ps}}_{3,28}$ is normal at the moduli point of every fibre of the families $\{\mathscr{X}_{ij}\to U_{ij}\}$. If $X$ is a fibre with at worst isolated ordinary double points (in particular, terminal Gorenstein) then by \cite[Proposition 3]{Namikawa} the deformations of $X$ are unobstructed. This implies that the stack $\mathscr{M}^\text{\textnumero2.21}$ is smooth at the moduli point $[X]$, and then it follows from \cite[Theorem 4.16(viii)]{Alper} that the K-moduli space $M^\text{\textnumero2.21}$ is normal at $[X]$.

The only fibre of $\{\mathscr{X}_{ij}\to U_{ij}\}$ with non-terminal singularities is, up to isomorphism, the variety $\mathrm{Bl}_{C_{2,2}^\mathrm{nred}}(Q^\mathrm{niso})$. In Section \ref{section:deformation} we show that its deformations are unobstructed, which completes the proof that $M^\text{\textnumero2.21}$ is normal, and hence that it is isomorphic to the quotient $T/\Gamma$. Under this correspondence, the divisors $\Delta_1,\Delta_2,\Delta_3$ described in the statement of Theorem \ref{theorem:main} are given by images of the lines on $T$ with negative self-intersection under the quotient map. The moduli points of these divisors correspond to the fibres of the $\mathscr{X}_{ij}$ over these lines.\\ 

\noindent\textbf{Acknowledgements.}
The author thanks Prof. Ivan Cheltsov for his guidance during this project, and also the Simons Foundation for partially funding the research travel that made this work possible. 

\section{The families $\{\mathscr{X}_{ij}\to U_{ij}\}$, and the group $\Gamma$}
\label{section:families}

In this section, we construct the open cover $T=\bigcup U_{ij}$ and morphisms $\mathscr{X}_{ij}\to U_{ij}$ satisfying properties $(1)-(3)$ in the previous section.

First, observe that the boundary threefolds described in
Theorem \ref{theorem:main} are obtained by blowing up reducible
centres. This suggests that the universal family over the natural
parameter space defined in \eqref{eq:parameter space} is not suitable
for constructing the desired compactification of the family, since
it does not parametrise degenerations of the curve $C_4$. With this
in mind, we now give a different parametrisation of the smooth
members.

Recall that the adjugate of an $(n\times n)$-matrix $A$ has the property that:
$$\mathrm{adj}(\mathrm{adj}(A))=\mathrm{det}(A)^{n-2}A,$$
and therefore defines a birational involution, $\chi$, on the projectivisation of the space of $(n\times n)$-matrices. Let us identify $\P^5$ with the projectivisation of the space of symmetric $(3\times3)$-matrices, and write $\chi$ as:
\begin{equation}
    \label{eq:involution}
    \begin{aligned}
    \chi\colon \P^5&\dashrightarrow\P^5 \\ 
    \begin{pmatrix}
x_0 & x_1 & x_5 \\
x_1 & x_2 & x_3 \\
x_5 & x_3 & x_4
\end{pmatrix}&\mapsto 
\begin{pmatrix}
g_0 & g_1 & g_5 \\
g_1 & g_2 & g_3 \\
g_5 & g_3 & g_4
\end{pmatrix}
\end{aligned}
\end{equation}
where 
\begin{align*}
    g_0 &= x_2 x_4 - x_3^2, \\
    g_1 &= x_3 x_5-x_1 x_4, \\
    g_2 &= x_0 x_4 - x_5^2, \\
    g_3 &= x_1 x_5-x_0 x_3, \\
    g_4 &= x_0 x_2 - x_1^2, \\
    g_5 &= x_1 x_3 - x_2 x_5.
\end{align*}
Note that for symmetric $(3\times3)$-matrices, the adjugate vanishes precisely on matrices of the form $\mathbf{v}\mathbf{v}^{T}$, for $\mathbf{v}\in \C^3$. Thus, the base locus of $\chi$ is equal to the standard Veronese surface $S\subset\P^5$:
$$[x:y:z]\mapsto[x^2:xy:y^2:yz:z^2:xz].$$
Let $\pi\colon\widetilde{\P}^5\to\P^5$ be the blow-up along $S$, and let $E$ be the $\pi$-exceptional divisor. Then $\chi$ lifts to a biregular involution $\tilde{\chi}$ of $\widetilde{\P}^5$, which by construction swaps the linear systems $|\pi^*\O_{\P^5}(1)|$ and $|\pi^*\O_{\P^5}(2) - E|$ on $\widetilde{\P}^5$. Moreover, if we write $E'\subset\wt{\P}^5$ for the $\pi$-strict transform of the secant variety of $S$, which itself is a cubic fourfold singular along $S$, then the involution $\tilde{\chi}$ swaps $E$ and $E'$. 

Now, since $\tilde{\chi}$ is an involution we have that for any $H\in|\pi^*\O_{\P^5}(1)|$, the intersection 
$$X:=H\cap\tilde{\chi}(H)$$
is $\tilde{\chi}$-invariant. If $H$ is general, then this threefold $X$ is equal to the blow-up of a quadric threefold $\pi(\tilde{\chi}(H))\cap\pi(H)$ along the rational normal quartic curve $S\cap\pi(H)$, hence is a member of the family \textnumero2.21.

With this background established, let us now start defining our families. Let $$\mathscr{H}=\big\{ax_0+bx_2+cx_4=0\big\}\subset\wt{\P}^5\times \P^2,$$
where $a,b,c$ are coordinates on this $\P^2$ and by slight abuse of notation we view the $x_i$ as coordinates on $\wt{\P}^5$. The involution $\tilde{\chi}$ on $\wt{\P}^5$ extends naturally to $\wt{\P}^5\times \P^2$ by acting trivially on $\P^2$; let
$$\mathscr{X}=\mathscr{H}\cap\tilde{\chi}(\mathscr{H}),$$
which we view as a family of threefolds $\mathscr{X}\to \P^2$. By construction, every fibre of this family is of the form $\mathrm{Bl}_C(Q)$, where $Q:=\pi(H)\cap\pi(H')$ is quadric threefold in $\pi(H)\cong\P^4$ and $C:=\pi(H)\cap S\subset Q$ is a (not necessarily integral) curve of degree four. One checks that this morphism is smooth precisely over $\P^2\setminus(\Delta_1\cup\Delta_3)$, where:
\begin{align*}
    \Delta_1&:=\{(a + b + c)(a - b - c)(a - b + c)(a + b - c)=0\},\\
    \Delta_3&:=\{abc=0\}.
\end{align*}

\begin{remark}
This is an abuse of notation since in Theorem \ref{theorem:main} we have already defined the $\Delta_i$ to be divisorial components of the boundary of $M^\text{\textnumero2.21}$. However, under the quotient by the $\Gamma$-action the components of $\Delta_1$ and $\Delta_3$ (and $\Delta_2$ which we define in the next subsection) are identified with the corresponding 
boundary components; we hope this vindicates the notation. 
\end{remark}

\begin{lemma}
\label{lemma:Fanoness}
Let $\Sigma:=\Delta_1\cap\Delta_3$. Then for every $x\in \P^2\setminus \Sigma$, the fibre $\mathscr{X}_x$ of $\mathscr{X}$ over $x$ is a Fano threefold of degree 28 with at worst ordinary double point singularities.
\end{lemma}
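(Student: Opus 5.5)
Each fibre is a blow-up, so the plan is to compute the fibre $\mathscr{X}_x=\mathrm{Bl}_C Q$ explicitly for $x=[a:b:c]$ and read off Fanoness, degree and singularities from the pair $(Q,C)$.

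\textbf{Step 1: describe $Q$ and $C$.} Set $H=\{ax_0+bx_2+cx_4=0\}$. Then $\pi(\tilde\chi(H))$ is the quadric $ag_0+bg_2+cg_4=0$, that is,
$$a(x_2x_4-x_3^2)+b(x_0x_4-x_5^2)+c(x_0x_2-x_1^2)=0.$$
Restricting to $\pi(H)\cong\P^4$ gives $Q$. The curve $C=\pi(H)\cap S$ is the image under the Veronese map of the plane conic $\{ax^2+by^2+cz^2=0\}$. Hence $C$ is a smooth rational normal quartic when $abc\neq0$, and when $abc=0$ it degenerates to the image of a line pair (two conics meeting in a point) or of a double line. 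On $\P^2\setminus\Sigma$, note that $abc=0$ together with $x\notin\Delta_1$ forces at most one coordinate to vanish. So off $\Sigma$ the conic is either smooth or a reduced line pair, and $C$ is reduced of degree 4 with $p_a=0$, not contained in a hyperplane.

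\textbf{Step 2: singularities of $Q$.} Eliminate one variable using $H$, say $x_0=-(bx_2+cx_4)/a$ when $a\ne0$ (using symmetry otherwise), and compute the rank of the resulting $5\times5$ symmetric matrix. I expect its determinant to be, up to a unit, the product defining $\Delta_1$ times a power of $abc$. Two facts then need to be checked. First, on $\P^2\setminus\Sigma$ the rank is always at least 4, so $Q$ is either smooth or has one isolated node. Second, that node is disjoint from $C$. The second fact can be checked directly: write the vertex as a symmetric matrix and verify that it does not have rank one, i.e.\ that it is not in $S$.

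\textbf{Step 3: singularities and invariants of $\mathrm{Bl}_CQ$.} Since $\mathscr{X}_x=H\cap\tilde\chi(H)$ is a complete intersection of two divisors in the smooth fivefold $\widetilde{\P}^5$, it is Gorenstein. Where $C$ is a local complete intersection in the smooth locus of $Q$, the blow-up is smooth away from the preimages of singular points of $C$. Consequently:
\begin{itemize}
\item The node of $Q$ survives as an ordinary double point, since it lies off $C$.
\item At the node of the reducible curve $C=C_1\cup C_2$, the blow-up of a smooth threefold along two smooth curves meeting transversally acquires exactly one ordinary double point. This is a standard local computation: locally the ideal is $(x,yz)$, and the blow-up is $\{xu=yzv\}$, which has an $A_1$ point.
\end{itemize}

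For the invariants, $-K_{\mathscr{X}_x}$ is the restriction of $-K_{\widetilde{\P}^5}-H-\tilde\chi(H)$. With $h=\pi^*\O(1)$ we have $-K=6h-2E$, $H=h$ and $\tilde\chi(H)=2h-E$, so $-K_X=(3h-E)|_X$. Its top self-intersection $(3h-E)^3\cdot h\cdot(2h-E)$ is constant in the flat family, and it equals 28 because the general fibre lies in family \textnumero2.21.

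\textbf{Step 4: ampleness.} Note that $3h-E=h+(2h-E)$, and both summands are nef, being pullbacks along the two contractions $\pi$ and $\pi\circ\tilde\chi$. It therefore suffices to show that no curve in $X$ is contracted by both. Such a curve would be a fibre of $E\to S$, i.e.\ a line over a point $p\in C$, lying in both $H$ and $\tilde\chi(H)$. That happens exactly when the fibre of $\mathrm{Bl}_CQ\to Q$ over $p$ is two-dimensional, which requires $Q$ to be singular at $p$ or $C$ to be non-lci at $p$. Both are excluded by Step 2 off $\Sigma$.

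I expect Step 2 to be the main obstacle. It is the explicit rank computation, and above all the check that the vertex of $Q$ avoids $C$ on all of $\Delta_1\setminus\Sigma$ and not just generically. Off $\Sigma$, however, a transitive symmetry group permuting the components of $\Delta_1$ (sign changes of $a,b,c$, together with permutations) should reduce this to a single one-parameter computation.
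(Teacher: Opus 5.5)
The gap is in Step 1, in your claim that $abc=0$ together with $x\notin\Delta_1$ forces at most one of $a,b,c$ to vanish. This is false. At $x=[1:0:0]$ each of the four forms $\pm a\pm b\pm c$ equals $\pm1$, so the three coordinate points lie in $\Delta_3\setminus\Sigma$; recall that $\Sigma$ is the six points such as $[1:0:-1]$. Over these points the conic $ax^2+by^2+cz^2=0$ is a double line, so $C=\pi(H)\cap S$ is the non-reduced curve $2\ell$ on $S$. Moreover $Q=\{x_0=0,\ x_2x_4-x_3^2=0\}$ has rank $3$ and is singular along the whole line $\{x_0=x_2=x_3=x_4=0\}$. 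Consequently your Step 2 claim that the rank is at least $4$ on $\P^2\setminus\Sigma$ fails. Step 3 also gives nothing here: every point of a non-reduced $C$ is a singular point of $C$, so ``smooth away from the singular points of $C$'' is empty.

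This fibre, $\mathrm{Bl}_{C^{\mathrm{nred}}_{2,2}}(Q^{\mathrm{niso}})$, must be analysed separately, as the paper does. The vertex line of $Q$ is disjoint from $C$, whose support lies in $\{x_1=x_5=0\}$, so $Q$ is smooth along $C$. Since $C$ is the divisor $2\ell$ on the smooth surface $S$, it has embedding dimension $2$ and is locally $(x^2,y)$ in $Q$. The blow-up then has the chart $\{x^2=yu\}\subset\C^4$, which is a transverse $A_1$ singularity along a curve in the exceptional divisor. So this fibre is singular along two disjoint curves of transverse $A_1$ singularities and has no isolated nodes. Your argument would instead ``prove'' that every fibre off $\Sigma$ has only isolated ordinary double points, which is false. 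The lemma holds only if ``ordinary double point'' is read to include transverse $A_1$ singularities along curves, which is how the paper's proof treats it.

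A smaller point concerns Step 4. You assert that a curve is contracted by both $\pi$ and $\pi\circ\tilde\chi$ exactly when some fibre of $\mathrm{Bl}_CQ\to Q$ is two-dimensional, and this is unjustified: whether such a curve exists is a property of $\wt{\P}^5$ alone, not of $Q$ or $C$. The clean argument, used in the paper, is that $(\pi,\pi\circ\tilde\chi)$ embeds $\wt{\P}^5$ in $\P^5\times\P^5$ as the graph of $\chi$. Hence $h+(2h-E)$ is the restriction of $\O(1,1)$ and is ample on every fibre, including the non-reduced one.
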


\begin{proof}
Let us first prove that $X:=\mathscr{X}_x$ is a Fano variety of degree 28. By the blow-up formula we have that $-K_{\wt{\P}^5}\sim 6H-2E=2H+2H'$, where $H'\in |\pi^*\O_{\P^5}(2) - E|$. For $x\in\P^2\setminus \Sigma $ we have that $X\subset\wt{\P}^5$ is an integral complete intersection subscheme of the form $H\cap H'$, so that its normal bundle is:
$$\mathcal{N}_{X/\wt{\P}^5}\cong \O_X(H|_X)\oplus \O_X(H'|_X).$$
Hence by the adjunction formula we have that $-K_X\sim H|_X+H'|_X$, which is ample because it is the restriction of the very ample line bundle $\O_{\P^5\times\P^5}(1,1)$ to $X\subset\P^5\times\P^5$. Moreover since $(-K_X)^3$ is invariant under deformation, we have that $(-K_X)^3=28$.

Let us now examine the singular fibres. If $x\in \Delta_1\setminus \Sigma$ the corresponding fibre is a threefold of the form $\mathrm{Bl}_{C_4}(Q^\mathrm{iso})$, which has an isolated ordinary double point since $\mathrm{Sing}(Q^\mathrm{iso})\cap C_4=\varnothing$ in this case. If $x\in \Delta_3\setminus \Sigma$ the corresponding fibre is a threefold of the form $\mathrm{Bl}_{C_{2,2}}(Q^\mathrm{iso})$ or $\mathrm{Bl}_{C_{2,2}^\mathrm{nred}}(Q^\mathrm{niso})$. Threefolds of the form $\mathrm{Bl}_{C_{2,2}}(Q^\mathrm{iso})$ have two isolated ordinary double points since $\mathrm{Sing}(Q^\mathrm{iso})\cap C_{2,2}=\varnothing$ in this case.

The threefold $\mathrm{Bl}_{C_{2,2}^\mathrm{nred}}(Q^\mathrm{niso})$ has singular locus consisting of two disjoint curves $Z_1\cup Z_2$. One, say $Z_1,$ comes from the singularities of $Q^\mathrm{niso}$ which is a line of transverse $A_1$-singularities. The other, $Z_2$, is contained in the exceptional divisor of the blow-up of $Q^\mathrm{niso}$ along $C_{2,2}^\mathrm{nred}$. Since $Q^\mathrm{niso}$ is smooth along $C_{2,2}^\mathrm{nred}$, then the curve $C_{2,2}^\mathrm{nred}$ is locally of the form $V(x^2,y)$ inside of $\C_{x,y,z}^3$. Therefore $\mathrm{Bl}_{C_{2,2}^\mathrm{nred}}(Q^\mathrm{niso})$ is given locally by $\{xz-y^2=0\}\subset\C^4_{x,y,z,w}$, with $Z_2=\{x=y=z=0\}$, and hence $Z_2$ is also a smooth curve of transverse $A_1$-singularities.
\end{proof}

\subsection{The group action}
\label{subsec:group action}
We have a natural action of the group $\mathrm{PGL}_3(\C)$ on this $\P^5$ given by:
\begin{align*}
    \mathrm{PGL}_3(\C) \times \P^5&\to \P^5 \\ 
    ([M],[A])&\mapsto [M.A.M^\mathrm{T}].
\end{align*}
Clearly this action preserves the locus of matrices of the form $\mathbf{v}\mathbf{v}^{\mathrm{T}}$, for $\mathbf{v}\in \C^3$, and hence preserves the Veronese surface $S$. This $\mathrm{PGL}_3(\C)$-action therefore lifts to an action on $\widetilde{\P}^5$. The following properties are straightforward to prove:
\begin{itemize}
    \item The injective homomorphism $\mathrm{PGL}_3(\C)\hookrightarrow \mathrm{Aut}(\P^5, S)$ is an isomorphism, where this latter group is the group of projective transformations of $\P^5$ which preserve $S$. Moreover, $\mathrm{Aut}(\wt{\P}^5)$ is generated by $\mathrm{Aut}(\P^5, S)$ and $\tilde{\chi}$,
    \item $\mathrm{PGL}_3(\C)$ and $\tilde{\chi}$ satisfy the identity:
$$\mathrm{adj}(M. A. M^\mathrm{T}) =  \mathrm{det}(M)^2 . (M^{-1})^\mathrm{T}  . \mathrm{adj}(A). M^{-1}.$$
\end{itemize}


\begin{definition}
\label{def:H}
Let $\Gamma\subset\mathrm{PGL}_3(\C)$ be the subgroup, isomorphic to $\Z_4^2\rtimes\mathfrak{S}_3$, generated by the matrices:
$$
M_1:=\begin{pmatrix}
    1 & 0 & 0 \\ 
    0 & i  & 0 \\
    0 & 0 & 1
\end{pmatrix},\quad
M_2:=\begin{pmatrix}
    0&0&1 \\ 
    0&1 & 0 \\
    1 & 0& 0
\end{pmatrix},\quad
M_3:=\begin{pmatrix}
    0 & 0 & 1 \\ 
    1& 0 & 0 \\ 
    0 & 1 & 0
\end{pmatrix}.
$$
We consider $\Gamma$ to be acting on $\wt{\P}^5$ as above. There is also a $\Gamma$-action on the base $\P^2$ of the family $\mathscr{X}$ such that the morphism $\mathscr{X}\to \P^2 $ is $\Gamma$-equivariant, given by:
\begin{align*}
\Gamma&\longrightarrow \mathrm{PGL}_3(\C), \\ 
M_1, M_2, M_3&\longmapsto M_1^2,  M_2,  M_3.
\end{align*}
We note that the image of this projective representation is isomorphic to $\Z_2^2\rtimes\mathfrak{S}_3\cong \mathfrak{S}_4$.
\end{definition}

\begin{lemma}
\label{lemma:orbits}
    For all $x,y\in\P^2\setminus\Sigma$, we have that $\mathscr{X}_x\cong\mathscr{X}_y$ if and only if $y\in \Gamma\cdot x$.
\end{lemma}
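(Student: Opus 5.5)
The "if" direction is immediate from the construction. Each $g\in\Gamma\subset\mathrm{PGL}_3(\C)$ acts on $\wt{\P}^5\times\P^2$ by the lifted action on $\wt{\P}^5$ and by the projective representation of Definition \ref{def:H} on $\P^2$. We first check that this action preserves $\mathscr{H}$. Indeed, $M_1$ sends $ax_0+bx_2+cx_4$ to $ax_0-bx_2+cx_4$, matching $M_1^2=\mathrm{diag}(1,-1,1)$. The permutation matrices $M_2,M_3$ permute the diagonal entries $x_0,x_2,x_4$ and hence permute $a,b,c$ accordingly. Next we check that $g$ commutes with $\tilde\chi$ up to an element of $\mathrm{PGL}_3(\C)$. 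By the identity $\mathrm{adj}(MAM^T)=\det(M)^2(M^{-1})^T\mathrm{adj}(A)M^{-1}$, we get $\tilde\chi\circ g=g'\circ\tilde\chi$ with $g'=(M^{-1})^T$. For our generators $M_1$, $M_2$, $M_3$, the matrix $(M^{-1})^T$ again lies in $\Gamma$; it is diagonal or a permutation matrix, and it induces the same map on $\P^2$. This last fact holds because $(M_1^{-1})^T=\mathrm{diag}(1,-i,1)$ also squares to $\mathrm{diag}(1,-1,1)$. Consequently $g$ maps $\mathscr{H}_x\cap\tilde\chi(\mathscr{H}_x)$ isomorphically onto $\mathscr{H}_{gx}\cap\tilde\chi(\mathscr{H}_{gx})$. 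This gives $\mathscr{X}_x\cong\mathscr{X}_{g\cdot x}$.

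For the "only if" direction, let $\phi\colon\mathscr{X}_x\to\mathscr{X}_y$ be an isomorphism. The plan is to show that $\phi$ is induced by an automorphism of $\wt{\P}^5$ preserving the relevant hyperplanes, and then to identify that automorphism. First, the embedding $X=\mathscr{X}_x\subset\wt{\P}^5\subset\P^5\times\P^5$ is given by $|-K_X|$ split into its two pieces $H|_X$ and $H'|_X$. These are the two extremal contractions, namely the blow-down to the quadric and the map given by $H'$. Here we use that $\mathrm{Pic}$ or the class group has rank $2$ on the fibres, or more robustly that the two contractions are intrinsic to $X$. The isomorphism $\phi$ therefore either preserves the pair $(H|_X,H'|_X)$ or swaps it. In the swapped case we compose with $\tilde\chi$, which restricts to an automorphism of $X$ exchanging the two classes. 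Next we show that $H^0(\wt{\P}^5,\pi^*\O(1))\to H^0(X,H|_X)$ is surjective with one-dimensional kernel, so $H|_X$ embeds $X$ into $\pi(H)\cong\P^4$ linearly normally; the same holds for $H'$. Then $\phi$ induces a linear isomorphism $\pi(H_x)\to\pi(H_y)$ carrying the quadric $Q_x$ to $Q_y$ and the base curve $C_x=S\cap\pi(H_x)$ to $C_y$.

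The key step is to extend this to an element of $\mathrm{Aut}(\P^5,S)=\mathrm{PGL}_3(\C)$. The degree-$4$ curve $C_x$ is a hyperplane section of the Veronese surface, namely the image of the conic $q_x=\{ax^2+by^2+cz^2=0\}\subset\P^2$. A linear isomorphism between hyperplane sections of $S$ that carries $C_x$ to $C_y$ is induced from an isomorphism of conics $q_x\to q_y$. Since $\mathrm{PGL}_3$ acts transitively on pairs (point, smooth conic), this isomorphism extends to some $M\in\mathrm{PGL}_3(\C)$; in the degenerate cases we use the embedding of $C$ into $\P^4$ together with the pencil of quadrics containing it. The element $M$ is determined up to the stabiliser of $q_y$. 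We then need the extension to also match the quadrics $Q$, i.e.\ the $\tilde\chi$-side, and this is where the real constraint lies. After normalising, the problem reduces to the following: when is there $M\in\mathrm{PGL}_3$ with $M\,\mathrm{diag}(a,b,c)$-type data preserved on both the $H$ and the $\tilde\chi(H)$ side? A conic $q$ together with the dual data from $\mathrm{adj}$, namely the conic $\{bc\,x^2+ac\,y^2+ab\,z^2=0\}$, gives a pair of conics in $\P^2$. The isomorphism class of such a pair is governed by the simultaneous diagonalisation. The pencil spanned by $\mathrm{diag}(a,b,c)$ and $\mathrm{diag}(bc,ac,ab)$ has invariant its eigenvalue ratios $a^2:b^2:c^2$ up to permutation, which is exactly $\Gamma$-orbits on $[a:b:c]$, since $\Gamma$ acts on $\P^2$ by sign changes and permutations. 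So the plan is to show that the isomorphism class of the fibre determines the pair of conics $(q_x,q_x^\vee)$ up to $\mathrm{PGL}_3$, and that this pair determines $[a^2:b^2:c^2]$ up to $\mathfrak S_3$.

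The main obstacle is the step that $\phi$ is induced by an ambient automorphism for the singular fibres. On $\Delta_3$, where one of $a,b,c$ vanishes and the conic is reducible, linear normality and the description of the contractions need separate checking. The same is true for the fibre $\mathrm{Bl}_{C_{2,2}^{\mathrm{nred}}}Q^{\mathrm{niso}}$. There we would argue directly using the explicit $\mathrm{Aut}$ computation, or by comparing the invariants of the singular loci, which are one versus two nodes versus curves of $A_1$-singularities, to separate the strata before applying the pair-of-conics argument within each stratum.
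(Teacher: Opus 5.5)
Up to the last step your route is the paper's: $\varphi$ preserves or swaps the rays $H|_X,H'|_X$ of $\mathrm{Nef}(X)$ (compose with $\tilde\chi$ if it swaps them). It then descends to a linear isomorphism $\overline{\varphi}\colon\Pi_x\to\Pi_y$ carrying $(Q_x,C_x)$ to $(Q_y,C_y)$, which is lifted to $\mathrm{PGL}_3(\C)$ through the conic. Your ``if'' direction is more explicit than the paper's. Replacing the paper's computer calculation by simultaneous diagonalisation is a genuinely nicer final step.

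Your lift, however, is misjustified, and this leads you to put the difficulty in the wrong place. Transitivity of $\mathrm{PGL}_3(\C)$ does not extend a \emph{given} isomorphism $q_x\to q_y$. The paper's argument runs as follows:
\begin{itemize}
\item $\overline{\varphi}$ carries $\O_{\Pi_x}(1)|_{C_x}=\O_{\P^2}(2)|_{q_x}$ to its counterpart on $C_y$;
\item the Picard group of a plane conic has no $2$-torsion, so $\O_{\P^2}(1)$ is carried along as well;
\item plane conics are linearly normal, so $\overline{\varphi}|_{C_x}$ lifts to a \emph{unique} $\psi$;
\item $C_x$ spans $\Pi_x$, so $\psi|_{\Pi_x}=\overline{\varphi}$, and hence $\psi(Q_x)=Q_y$ automatically.
\end{itemize}
So $M$ is not ``determined up to the stabiliser of $q_y$'' with the quadrics still to be matched. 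Since line pairs have $\mathrm{Pic}\cong\Z^2$, the argument also works verbatim over $\Delta_3$. The ambient-automorphism step is therefore not the obstacle you flag.

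The genuine gap is the classification step on $\Delta_3=\{abc=0\}$, where your invariant carries no information. For $D=\mathrm{diag}(a,b,0)$ with $ab\neq0$ you have $\mathrm{adj}(D)=\mathrm{diag}(0,0,ab)$. Then $P=\mathrm{diag}(a^{-1/2},b^{-1/2},(ab)^{-1/2})$ takes $(D,\mathrm{adj}D)$ to $(\mathrm{diag}(1,1,0),\mathrm{diag}(0,0,1))$ for \emph{every} such $[a:b:0]$. But the lemma says the fibres over $[a:b:0]$ and $[a':b':0]$ are isomorphic only if $[a'^2:b'^2]\in\{[a^2:b^2],[b^2:a^2]\}$. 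So ``the pair-of-conics argument within each stratum'' cannot work. Singular loci do not rescue it either: all these fibres are $\mathrm{Bl}_{C_{2,2}}(Q^\mathrm{iso})$ with two nodes.

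The root cause is that $\mathrm{adj}(D)\propto D^{-1}$ stands in for the correct data only when $D$ is invertible. If $\psi$ acts by $A\mapsto MAM^T$, then:
\begin{itemize}
\item it sends the hyperplane $\mathrm{tr}(DA)=0$ to the one with matrix $M^{-T}DM^{-1}$;
\item it sends the quadric $\mathrm{tr}(D\,\mathrm{adj}A)=0$ to $\mathrm{tr}(MDM^T\mathrm{adj}A)=0$.
\end{itemize}
A quadric containing the nondegenerate surface $S$ is determined by its restriction to a hyperplane. So the conditions are $M^{-T}D_xM^{-1}\propto D_y$ and $MD_xM^T\propto D_y$: the same matrix viewed as a conic in $\P^2$ and as a conic in the dual plane.

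For $abc\neq0$ this is exactly your pencil $\langle D,D^{-1}\rangle$. For $\mathrm{rank}\,D=2$, first permute coordinates by elements of $\Gamma$ so that $x$ and $y$ both lie on $\{c=0\}$. The two kernel conditions then force $M$ to preserve $\langle e_3\rangle$ and $\langle e_1,e_2\rangle$. This reduces everything to the $2\times2$ pair $(D',D'^{-1})$, whose invariant is $[a^2:b^2]$ up to swap. Since rank is a congruence invariant, this also separates the strata. The rank-one case (the three coordinate vertices, a single $\Gamma$-orbit) is then immediate.
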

\begin{proof}
    Let us prove the forward implication; the converse is true by the construction of $\mathscr{X}\to\P^2$.
    
    We first prove that every isomorphism $\mathscr{X}_x\cong\mathscr{X}_y$ is induced by an element of the group $\mathrm{Aut}(\P^5, S)$ which maps the corresponding quadric threefolds $\pi(\mathscr{X}_x),\pi(\mathscr{X}_y)$ to one another. Since every fibre $X$ of $\mathscr{X}\to \P^2$ is the blowup of a quadric threefold along a connected lci subscheme of codimension $\geq 2$ we have that $\mathrm{Pic}(X)\cong \Z[H|_X]\oplus \Z[E|_X]$. Moreover, we have that $\mathrm{Nef}(X)$ is spanned by $\mathbb{R}_{\geq0}[H|_X]$ and $\mathbb{R}_{\geq0}[H'|_X]$, and that these two rays are swapped under the action of $\tilde{\chi}|_X$ on $\mathrm{Nef}(X)$.
    
    Suppose that we have an isomorphism $\varphi\colon \mathscr{X}_x\xrightarrow{\sim} \mathscr{X}_y$. Then $\varphi$ maps the extremal rays of $\mathrm{Nef}(\mathscr{X}_x)$ to the extremal rays of $\mathrm{Nef}(\mathscr{X}_y)$, so that we either have that:
    $$
    \varphi^*H|_{\mathscr{X}_y}=H|_{\mathscr{X}_x}\text{ and }    \varphi^*H'|_{\mathscr{X}_y}=H'|_{\mathscr{X}_x}
    $$
    or
    $$
    \varphi^*H|_{\mathscr{X}_y}=H'|_{\mathscr{X}_x}\text{ and }\varphi^*H'|_{\mathscr{X}_y}=H|_{\mathscr{X}_x}.
    $$
    Let us suppose we are in the first case. Then $\varphi$ induces an isomorphism $\overline{\varphi}\colon\Pi_x\xrightarrow{\sim}\Pi_y$ between the hyperplanes $\Pi_\alpha\subset\P^5$ spanned by $\pi(\mathscr{X}_\alpha)$, for $\alpha\in\{x,y\}$, such that the following diagram commutes:
    $$\begin{tikzcd}
	{\mathscr{X}_x} & {\mathscr{X}_y} \\
	{\Pi_x} & {\Pi_y}
	\arrow["\varphi", from=1-1, to=1-2]
	\arrow["\pi|_{\mathscr{X}_x}"', from=1-1, to=2-1]
	\arrow["\pi|_{\mathscr{X}_y}", from=1-2, to=2-2]
	\arrow["\overline{\varphi}", from=2-1, to=2-2]
\end{tikzcd}$$
The morphism $\varphi$ maps the exceptional divisor $E|_{\mathscr{X}_x}$ of $\pi|_{\mathscr{X}_x}$ to that of $\pi|_{\mathscr{X}_y}$, so that $\overline{\varphi}$ maps the curve $ C_x:=\Pi_x\cap S$ to $C_y:=\Pi_y\cap S$. The induced map on Picard groups:
$$\overline{\varphi}|_{C_x}^*\colon\mathrm{Pic}(C_y)\xrightarrow{\sim}\mathrm{Pic}(C_x)$$
maps $\O_S(2)|_{C_y}$ to $\O_S(2)|_{C_x}$ (under the identification $S\cong\P^2)$, which implies that $\overline{\varphi}|_{C_x}^*$ maps $\O_S(1)|_{C_y}$ to $\O_S(1)|_{C_x}$ since the Picard group of a plane conic has no 2-torsion. Since $C_\alpha$ is linearly normal in $\P^2$, the map $\overline{\varphi}|_{C_x}$ lifts to a unique automorphism $\psi\in\mathrm{Aut}(S)\cong\mathrm{PGL}_3(\C)$ such that $\psi|_{C_x}=\overline{\varphi}|_{C_x}$.

We consider this $\psi$ to be acting on $\P^5$ as described in (\ref{subsec:group action}). Since each $C_\alpha$ is nondegenerate and linearly normal in $\Pi_\alpha$, we have that $\psi$ maps $\Pi_x\xrightarrow{\sim}\Pi_y$ and moreover $\psi|_{\Pi_x}$ is uniquely determined by its restriction to $C_x$. Therefore, we have that $\psi|_{\Pi_x}=\overline{\varphi}$. The map $\psi$ lifts to an automorphism $\tilde{\psi}$ of $\wt{\P}^5$ whose restriction to $\mathscr{X}_x$ agrees with $\varphi$ on a dense open subset of $\mathscr{X}_x$, and hence $\varphi=\tilde{\psi}|_{\mathscr{X}_x}$. Therefore $\mathscr{X}_x$ and $\mathscr{X}_y$ are related by an element of $\mathrm{Aut}(\P^5,S)$. A computer calculation then shows that this element belongs to the subgroup $\Gamma$ of $\mathrm{Aut}(\P^5,S)\cong\mathrm{PGL}_3(\C)$.

Now suppose that $\varphi^*H|_{\mathscr{X}_y}=H'|_{\mathscr{X}_x}$. Then $(\varphi\circ\tilde{\chi}|_{\mathscr{X}_x})^*H|_{\mathscr{X}_y}=H|_{\mathscr{X}_x}$, so that by the same argument as above we have that there exists $\psi\in\mathrm{Aut}(\P^5, S)$ whose lift to $\wt{\P}^5$ satisfies $\tilde{\psi}|_{\mathscr{X}_x}=\varphi\circ\tilde{\chi}|_{\mathscr{X}_x}$, so that $\mathscr{X}_x$ and $\mathscr{X}_y$ are again related by an element of $\mathrm{Aut}(\P^5, S)$, and hence an element of $\Gamma$.
\end{proof}

\subsection{Modification to the family}
In Section \ref{section:K-polystability} it is shown that, for $p\in \P^2\setminus\Sigma$, the fibre of $\mathscr{X}$ over $p$ is K-polstable. Now, observe that the fibres of $\mathscr{X}$ over $\Sigma$ are reducible. By \cite[Theorem 1.2]{Odaka}, K-semistable Fano varieties have klt singularities, and hence are irreducible. Since we are trying to build families of K-polystable (and therefore K-semistable) Fano varieties, we must make a modification to the family $\mathscr{X}$ in order to obtain one with only K-polystable fibres. \\

To this end, let $\varepsilon\colon T\to\P^2$ be the blow-up along $\Sigma$, and let $\Delta_2\subset T$ be the $\varepsilon$-exceptional divisor. Let $U_0:=T\setminus\Delta_2$, and let $\mathscr{X}_0\to U_0$ denote the restriction to $U_0$ of the $\varepsilon$-pullback $\mathscr{X}\times_{\P^2}T$; every fibre of this family is irreducible. Now, for each point $p_i$ in the set
$$\Sigma=\Big\{\,[1:0:-1], \,\,[1:0:1], \,\,[1:1:0], \,\,[1:-1:0],\,\,[0:1:1],\,\,[0:1:-1]\,\Big\}$$
we define open subsets $U_i\subset T$ by:
$$U_i:=\varepsilon^{-1}\big(\P^2\setminus (\Sigma_i\cup\check{p}_i)\big),$$
where $\check{p}_i\subset\P^2$ is the line which is projective dual to $p_i$ and $\Sigma_i:=\Sigma\setminus\{p_i\}$. Let $E_i$ be the component of $\Delta_2$ over $p_i$. Then $U_i$ is an open neighbourhood of $E_i$, and is disjoint from the other exceptional divisors. Writing $U_{i1}$ and $U_{i2}$ for the natural affine charts of $U_i$ then as $p_i$ ranges over $\Sigma$ we thus obtain an open cover:
\begin{equation}
\label{eq:cover}
T=\bigcup_i \big(U_{i1}\cup U_{i2}\big).
\end{equation}
We do not include the open set $U_0$ in this cover, since it is redundant, although we will use the family $\mathscr{X}_0$ to obtain isomorphisms between the fibres of the other families.

Let us now define a family of threefolds over each open set in the cover (\ref{eq:cover}). We introduce a \emph{deformed} version of the adjugate for any $t\in \C$, which on symmetric $(3\times 3)$-matrices is given by:
\begin{equation}
    \begin{aligned}
    \chi_t\colon \P^5&\dashrightarrow\P^5 \\ 
    \begin{pmatrix}
x_0 & x_1 & x_5 \\
x_1 & x_2 & x_3 \\
x_5 & x_3 & x_4
\end{pmatrix}&\mapsto 
\begin{pmatrix}
g_0(t) & g_1(t) & g_5(t) \\
g_1(t) & g_2(t) & g_3(t) \\
g_5(t) & g_3(t) & g_4(t)
\end{pmatrix},
\end{aligned}
\end{equation}
where 
\begin{align*}
    g_0(t) &= x_2 x_4 - tx_3^2, \\
    g_1(t) &= tx_3 x_5-x_1 x_4, \\
    g_2(t) &= x_0 x_4 - t^2x_5^2, \\
    g_3(t) &= tx_1 x_5-x_0 x_3, \\
    g_4(t) &= x_0 x_2 - tx_1^2, \\
    g_5(t) &= x_1 x_3 - x_2 x_5.
\end{align*}
One checks that this deformed adjugate is a birational involution for every $t\in\C$. Then each open set $U_{ij}$ in the cover (\ref{eq:cover}), after choosing affine coordinates $(t_{ij},v_{ij})$ such that $E_i\cap U_{ij}$ is defined by $t_{ij}=0$, parametrises the involutions $\chi_t$ as follows:
\begin{align*}
    \psi_{ij}\colon\P^5\times U_{ij}&\dashrightarrow\P^5\times U_{ij} \\ 
    \big(p, t_{ij}, v_{ij}\big) &\mapsto \big(\chi_{t_{ij}}(p), t_{ij},v_{ij}).
\end{align*}
The base locus of each $U_{ij}$-birational map $\psi_{ij}$ is therefore given by:
$$\mathscr{S}_{ij}=\big\{g_0(t_{ij})=g_1(t_{ij})=g_2(t_{ij})=g_3(t_{ij})=g_4(t_{ij})=g_5(t_{ij})=0\big\}.$$
Let us now define the families. Let 
\begin{align*}\mathscr{H}_{i1}&=\bigg\{\dfrac{1}{2}(x_0+x_4) + v_{i1}x_2 - x_5 = 0\bigg\}\subset\P^5\times U_{i1},\\
\mathscr{H}_{i2}&=\bigg\{\dfrac{v_{i2}}{2}(x_0+x_4) + x_2 - x_5 = 0\bigg\}\subset\P^5\times U_{i2},
\end{align*}
for $i=1,\ldots, 6$. Then each the restriction $\psi_{ij}$ to $\mathscr{H}_{ij}$ is birational map whose base locus is equal to $\mathscr{C}_{ij}:=\mathscr{S}_{ij}\cap \mathscr{H}_{ij}$.
\begin{remark}
    For each $v_{ij}$, the coordinate-projection morphism $\mathscr{C}_{ij}\to \A^1_{t_{ij}}$ is a one-parameter degeneration of rational normal quartics to a curve of type $C_{1,2,1}$ or $C_{1,1,1,1}$.
\end{remark}
Then
\begin{align*}\psi_{i1}(\mathscr{H}_{i1})&=\bigg\{\dfrac{1}{2}(g_0(t_{i1})+g_4(t_{i1})) + v_{i1}g_2(t_{i1}) - g_5(t_{i1}) = 0\bigg\}\subset\P^5\times U_{i1},\\
\psi_{i2}(\mathscr{H}_{i2})&=\bigg\{\dfrac{v_{i2}}{2}(g_0(t_{i2})+g_4(t_{i2})) + g_2(t_{i2}) - g_5(t_{i2}) = 0\bigg\}\subset\P^5\times U_{i2},
\end{align*}
Let $\mathscr{Q}_{ij}=\mathscr{H}_{ij}\cap \psi_{ij}(\mathscr{H}_{ij})$. Then $\mathscr{C}_{ij}\subset \mathscr{Q}_{ij} $.

\begin{definition}
    For each $i\in\{1,\ldots,6\},j\in\{1,2\}$, let $\mathscr{X}_{ij}$ be the blow-up of the family of quadrics $\mathscr{Q}_{ij}$ along the family of curves $\mathscr{C}_{ij}$. 
\end{definition}
\begin{remark}
    Each $\mathscr{Q}_{ij}$ is $\psi_{t_{ij}}$-invariant. Therefore $\psi_{ij}$ lifts to a biregular involution, $\tilde{\psi}_{ij}$ on $\mathscr{X}_{ij}$.
\end{remark}

\begin{lemma}
    Each family $\mathscr{X}_{ij}\to U_{ij}$ is a proper, flat, Gorenstein morphism whose fibres are Gorenstein Fano varieties isomorphic to $\mathrm{Bl}_{(C_{ij})_u}(\mathscr{Q}_{ij})_u$. 
\end{lemma}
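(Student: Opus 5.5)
Rather than working with the blow-up $\mathscr{X}_{ij}=\mathrm{Bl}_{\mathscr{C}_{ij}}\mathscr{Q}_{ij}$ directly, I will realise it as a relative complete intersection, exactly as in the construction of $\mathscr{X}\to\P^2$. Let $\wt{\mathscr{P}}_{ij}\to\P^5\times U_{ij}$ be the blow-up along $\mathscr{S}_{ij}$. Over $t_{ij}\neq0$, the involution $\chi_t$ is conjugate to $\chi$ by a diagonal coordinate change (rescaling the $x_k$ by powers of $t$). So $\mathscr{S}_{ij}$ restricted to $t\neq 0$ is a family of Veronese surfaces. At $t=0$ the six quadrics $g_k(0)$ should still cut out a surface $S_0$ of degree $4$, presumably a degenerate Veronese, i.e.\ a union of planes or a reducible degeneration of the Veronese surface.

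\textbf{Step 1: flatness of $\mathscr{S}_{ij}$ and the involution.} I will check that $\mathscr{S}_{ij}\to U_{ij}$ is flat with Hilbert polynomial that of the Veronese surface. I would do this with a Gröbner basis computation over $\C[t]$: verify that the initial ideal is independent of $t$, or equivalently that the ideal has no $t$-torsion. Moreover, $\mathscr{S}_{ij}$ is determinantal (the $2\times2$ minors of a symmetric matrix deformed by $t$), so it is Cohen--Macaulay of codimension $3$ in $\P^5\times U_{ij}$. Then the identity $\chi_t\circ\chi_t=\det_t\cdot\mathrm{id}$ (checked symbolically) shows that $\psi_{ij}$ lifts to a biregular involution of $\wt{\mathscr{P}}_{ij}$. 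This involution swaps $|H|$ with $|2H-E|$, since the deformed adjugate has the same bidegree structure.

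\textbf{Step 2: complete intersection.} Let $\wt{\mathscr{H}}_{ij}$ be the strict transform of $\mathscr{H}_{ij}$; it is a Cartier divisor in the class $H$. Then $\mathscr{X}_{ij}=\wt{\mathscr{H}}_{ij}\cap\tilde\psi_{ij}(\wt{\mathscr{H}}_{ij})$, the intersection of two Cartier divisors of classes $H$ and $H'=2H-E$. One inclusion is clear: the blow-up of $\mathscr{Q}_{ij}$ along $\mathscr{C}_{ij}=\mathscr{S}_{ij}\cap\mathscr{H}_{ij}$ is the strict transform of $\mathscr{Q}_{ij}$ inside $\wt{\mathscr{H}}_{ij}=\mathrm{Bl}_{\mathscr{C}_{ij}}\mathscr{H}_{ij}$, and it is contained in the intersection. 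For equality I need the intersection to have no component inside $E$, i.e.\ pure relative dimension $3$. On each fibre this amounts to two checks:
\begin{itemize}
\item $E\cap\wt{\mathscr{H}}\cap\tilde\psi(\wt{\mathscr{H}})$ has dimension $2$;
\item the fibre of $\mathscr{C}$ over $u$ is a curve, i.e.\ $\mathscr{H}_u$ contains no component of $S_u$.
\end{itemize}
The second check is where the choice of $\mathscr{H}_{ij}$ and the removal of $\check p_i$ from $U_i$ enter. Once pure dimensionality holds, the intersection is a local complete intersection of codimension $2$ in the smooth-over-$U_{ij}$ total space $\wt{\mathscr{P}}_{ij}$, after verifying that $\wt{\mathscr{P}}_{ij}$ is regular, or at least Gorenstein.

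\textbf{Step 3: conclusions.} Flatness follows from miracle flatness: the total space is Cohen--Macaulay, the base $U_{ij}$ is smooth, and all fibres have dimension $3$. Properness holds since $\mathscr{X}_{ij}$ is closed in $\P^5\times\P^5\times U_{ij}$. The fibres are complete intersections in the Gorenstein fibres of $\wt{\mathscr{P}}_{ij}$, hence Gorenstein. Adjunction, as in Lemma \ref{lemma:Fanoness}, gives
$$\omega_{\mathscr{X}_{ij}/U_{ij}}^{-1}\cong\O(H+H')|_{\mathscr{X}_{ij}}.$$
This is relatively very ample via the embedding into $\P^5\times\P^5$, so the fibres are Fano. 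Integrality of the fibres follows from the description of the fibres over $t\ne0$ as fibres of $\mathscr{X}_0$, together with an explicit check on $E_i$. Finally, blow-up commutes with base change here because the fibre of the complete intersection is the complete intersection of the fibre. This identifies each fibre with $\mathrm{Bl}_{(\mathscr{C}_{ij})_u}(\mathscr{Q}_{ij})_u$.

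\textbf{Main obstacle.} I expect the hardest step to be the $t=0$ fibres: showing that $\wt{\mathscr{P}}_{ij}$ is Gorenstein near $t=0$ and that no excess component appears inside $E$. My first attempt would be the explicit local charts of the blow-up at the singular points of $S_0$, relying on the Cohen--Macaulay determinantal structure to control the exceptional divisor.
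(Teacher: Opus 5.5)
Your route is genuinely different from the paper's. It can be completed, but three points need repair.

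\textbf{Comparison.} The paper never blows up the ambient $\P^5\times U_{ij}$. It checks by hand, on the charts $\{x_k\neq0\}$ for $k=0,\dots,4$, that $\mathscr{C}_{ij}\hookrightarrow\mathscr{Q}_{ij}$ is a regular embedding. So $\mathscr{C}_{ij}$ is lci, hence Cohen--Macaulay, and flat over $U_{ij}$ by miracle flatness. Lemma~\ref{lemma:blow-up-fibres} then gives the fibres. Since blowing up a regular embedding is an lci morphism, $\mathscr{X}_{ij}$ is lci, hence Gorenstein, CM and flat. Ampleness of $-K=3H-E$ comes from the embedding in $\P^4\times\P^4$.

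You instead realise $\mathscr{X}_{ij}$ as an $(H,H')$ complete intersection in $\wt{\mathscr{P}}_{ij}$, in parallel with Lemma~\ref{lemma:Fanoness}. This gives you the involution $\tilde\psi_{ij}$ and $\omega^{-1}_{\mathscr{X}_{ij}/U_{ij}}\cong\O(H+H')$ uniformly. The lift is indeed biregular, but the reason is that the graph closure of a birational involution is swap-invariant and equals the blow-up of the base ideal; the identity $\chi_t^2=\det_t\cdot\mathrm{id}$ alone only shows $\psi_{ij}$ is a birational involution. The price is that you must control the blow-up of the degenerating Veronese family at $t=0$. The paper avoids this because $\mathscr{C}_{ij}$ stays away from the bad locus.

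\textbf{First repair: Step 1.} At $t=0$ the fibre of $\mathscr{S}_{ij}$ is the chain $\{x_0=x_1=x_2=0\}\cup\{x_0=x_4=x_1x_3-x_2x_5=0\}\cup\{x_2=x_3=x_4=0\}$.
\begin{itemize}
\item $\mathscr{S}_{ij}$ is not symmetric-determinantal there. The $g_k(t)$ are the cofactors of $\begin{pmatrix}x_0&\sqrt t x_1&tx_5\\ \sqrt t x_1&x_2&\sqrt t x_3\\ tx_5&\sqrt t x_3&x_4\end{pmatrix}$ only up to the factors $\sqrt t,\sqrt t,t$ on $g_1,g_3,g_5$.
\item At the common point $p=[0:0:0:0:0:1]$, in the chart $x_5=1$, the ideal is $(x_2-x_1x_3)+I_2\begin{pmatrix}t&x_1&x_0\\ x_4&x_3&t\end{pmatrix}$. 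This is still CM, but not lci.
\item $\wt{\mathscr{P}}_{ij}$ is singular over $p$. In the chart where $x_2-x_1x_3$ generates the exceptional ideal, every relation is quadratic at the point with vanishing fibre coordinates.
\item $\wt{\mathscr{P}}_{ij}\to U_{ij}$ is not smooth at $t=0$.
\end{itemize}
So ``verify that $\wt{\mathscr{P}}_{ij}$ is regular'' fails globally. The fix is that $\mathscr{H}_{ij}$ never contains $p$: its equation takes the value $-1$ there. Away from $\{p\}\times(E_i\cap U_{ij})$ the family $\mathscr{S}_{ij}$ is smooth; for example, on $\{x_3\neq0\}$ it is the graph $x_1=x_2x_5$, $t=x_2x_4$, $x_0=tx_1x_5$. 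Hence $\wt{\mathscr{P}}_{ij}$ is smooth near $\mathscr{X}_{ij}$, $E$ is a $\P^2$-bundle there, and your Step~2 goes through on that open set.

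\textbf{Second repair: fibre identification.} The sentence ``the fibre of the complete intersection is the complete intersection of the fibre'' only places $(\mathscr{X}_{ij})_u$ inside $(\wt{\mathscr{P}}_{ij})_u$. It does not identify it with $\mathrm{Bl}_{(\mathscr{C}_{ij})_u}(\mathscr{Q}_{ij})_u$. You need one of two arguments.
\begin{itemize}
\item Show $(\wt{\mathscr{P}}_{ij})_u\cong\mathrm{Bl}_{S_u}\P^5$ near $\mathscr{X}_u$. This is a base-change statement of exactly the type of Lemma~\ref{lemma:blow-up-fibres}, applicable because $\mathscr{S}_{ij}$ is smooth and flat there.
\item Or use a closure argument. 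The surjection of Rees algebras makes $\mathrm{Bl}_{(\mathscr{C}_{ij})_u}(\mathscr{Q}_{ij})_u$ a closed subscheme of $(\mathscr{X}_{ij})_u$. The fibre $(\mathscr{X}_{ij})_u$ is CM, being a fibre of a flat family with CM total space over a regular base. It has no component in $E$ by your fibrewise check, and off $E$ it agrees with $(\mathscr{Q}_{ij})_u\setminus(\mathscr{C}_{ij})_u$. So it is reduced, and the two subschemes coincide.
\end{itemize}
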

\begin{proof}

Let us show that for each family $\mathscr{X}_{ij}\to U_{ij}$ and every $u\in U_{ij}$, we have that:
    $$(\mathscr{X}_{ij})_u\cong\mathrm{Bl}_{(C_{ij})_u}(\mathscr{Q}_{ij})_u.$$
    We claim that the inclusion $\mathscr{C}_{ij}\hookrightarrow \mathscr{Q}_{ij} $ is a regular embedding. Indeed, $\mathscr{C}_{ij}$ is covered by the affine opens $\{x_i\neq 0\}\subset \P^5\times U_{ij}$, for $i=0,\ldots,4$. It is straightforward to check by hand that on each of these affine patches, the 7 equations defining the three-dimensional variety $\mathscr{C}_{ij}$ inside of the five-dimensional variety $\mathscr{Q}_{ij}\cap\{x_i\neq0\}$ reduce to a regular sequence of length 2. 
    
    Now since $\mathscr{Q}_{ij}$ is a complete intersection inside the smooth variety $\P^5\times U_{ij}$, it follows that $\mathscr{C}_{ij}$ is an lci variety since the composition of lci morphisms is lci (cf. \cite[Tag 069J]{stacks-project}), and hence that it is Cohen-Macaulay. Moreover $U_{ij}$ is smooth, and the coordinate projection map $\mathscr{C}_{ij}\to U_{ij}$ is equidimensional, hence flat by Miracle Flatness (cf. \cite[Tag 00R4]{stacks-project}). Therefore
    $$(\mathscr{X}_{ij})_u\cong\mathrm{Bl}_{(C_{ij})_u}(\mathscr{Q}_{ij})_u,$$
    by Lemma \ref{lemma:blow-up-fibres}.

    Let us now show that each fibre is Fano. Let $X$ be the fibre of $\mathscr{X}_{ij}$ over any point $u\in U_{ij}$; then by the above $X\cong \mathrm{Bl}_{C}Q$ for some quadric threefold $Q$ and quartic curve $C$ obtained as a hyperplane section of the (possibly reducible) surface $(\mathscr{S}_{ij})_u\subset\P^5$. Write $H$ for the pullback of a hyperplane section of $Q$ to $X$, and $E$ the exceptional divisor of $X\to Q$. Then since the ideal of $(\mathscr{S}_{ij})_u$ in $\P^5$ is generated by 6 quadrics which are generically smooth along $(\mathscr{S}_{ij})_u$, then the ideal of $C$ in $Q$ is generated by $5$ quadrics generically smooth along $C$. Then since $X$ is the blow-up of $Q$ along $C$, it admits an embedding into $\P^4\times\P^4$ with $\O_{\P^4\times\P^4}(1,0)|_X\cong\O_X( H)$ and $\O_{\P^4\times\P^4}(0,1)|_X\cong\O_X( 2H-E)$. Then since each $Q$ is smooth along $C$, the adjunction formula for the blow-up gives:
    $$
    -K_X\sim \O_X(3H-E)\cong \O_{\P^4\times\P^4}(1,1)|_X
    $$
    which is ample.

    Finally, let us prove that the morphism $\mathscr{X}_{ij}\to U_{ij}$ has the properties described in the statement of the lemma. Clearly it is proper. For flatness and Gorensteinness, since $\mathscr{C}_{ij}\hookrightarrow\mathscr{Q}_{ij}$ is a regular embedding, the blow-up morphism $\mathscr{X}_{ij}\to \mathscr{Q}_{ij}$ is an lci morphism by \cite[Lemma 2.1]{aluffi}. The variety $\mathscr{Q}_{ij}$ is a complete intersection inside of the smooth variety $\P^5\times U_{ij}$, hence $\mathscr{X}_{ij}$ is an lci variety since the composition of lci morphisms is lci.
    
    This implies that $K_{\mathscr{X}_{ij}/U_{ij}}\cong K_{\mathscr{X}_{ij}}$ is Cartier and that $\mathscr{X}_{ij}$ is Cohen-Macaulay; thus $\mathscr{X}_{ij}\to U_{ij}$ is flat by Miracle Flatness. 
\end{proof}
We used the following lemma, which is known to experts:
\begin{lemma}
\label{lemma:blow-up-fibres}
Let $Z\hookrightarrow X$ be a regular closed immersion of Noetherian schemes, and let $X\to S$ be a morphism to a Noetherian scheme $S$. Suppose that the composition $\pi\colon Z\hookrightarrow X \to S$ is flat. Then for every $s\in S$, the fibre of $\mathrm{Bl}_ZX$ over $s$ is isomorphic to $\mathrm{Bl}_{Z_s}X_s$.
\end{lemma}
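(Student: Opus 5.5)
We argue locally on $X$ and compare Rees algebras. Let $\mathcal I\subset\O_X$ be the ideal sheaf of $Z$, so that $\mathrm{Bl}_ZX=\mathrm{Proj}\bigoplus_{n\ge0}\mathcal I^n$. The fibre of $\mathrm{Bl}_ZX$ over $s$ is $\mathrm{Proj}\bigoplus_n \mathcal I^n\otimes_{\O_S}\kappa(s)$. The ideal of $Z_s$ in $X_s$ is the image $\mathcal I_s$ of $\mathcal I\otimes\kappa(s)\to\O_{X_s}$, so $\mathrm{Bl}_{Z_s}X_s=\mathrm{Proj}\bigoplus_n\mathcal I_s^n$. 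There is a natural surjection of graded $\O_{X_s}$-algebras
$$\bigoplus_n \mathcal I^n\otimes\kappa(s)\twoheadrightarrow\bigoplus_n\mathcal I_s^n,$$
and it suffices to show this is an isomorphism in every degree, at least in large degree. Since $\mathcal I$ is locally generated by a regular sequence, $\mathcal I^n/\mathcal I^{n+1}\cong\mathrm{Sym}^n(\mathcal I/\mathcal I^2)$ is a locally free $\O_Z$-module. It is therefore flat over $S$, because $Z\to S$ is flat by hypothesis.

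\textbf{Step 1 (flatness of $\O_X/\mathcal I^n$ over $S$).} We induct on $n$ using the exact sequences
$$0\to\mathcal I^n/\mathcal I^{n+1}\to\O_X/\mathcal I^{n+1}\to\O_X/\mathcal I^n\to0.$$
The base case $n=1$ is the flatness of $\O_Z$. Since the kernel is flat, an extension of flat modules is flat, so every $\O_X/\mathcal I^n$ is $S$-flat.

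\textbf{Step 2 (base change of the powers).} Tensoring $0\to\mathcal I^n\to\O_X\to\O_X/\mathcal I^n\to0$ with $\kappa(s)$ over $\O_{S,s}$ gives an exact sequence, because $\mathrm{Tor}_1(\O_X/\mathcal I^n,\kappa(s))=0$ by Step 1. Thus $\mathcal I^n\otimes\kappa(s)\to\O_{X_s}$ is injective, and its image is the ideal generated by $\mathcal I^n$ in $\O_{X_s}$, which is $\mathcal I_s^n$. Hence $\mathcal I^n\otimes\kappa(s)\cong\mathcal I_s^n$ for all $n$, compatibly with multiplication. The surjection above is then an isomorphism of graded algebras, and taking $\mathrm{Proj}$ (which commutes with base change $-\otimes\kappa(s)$, the Rees algebra of $X$ being an $\O_S$-algebra) yields $(\mathrm{Bl}_ZX)_s\cong\mathrm{Bl}_{Z_s}X_s$.

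The main obstacle is the bookkeeping in Step 2. One must be careful that $\mathrm{Tor}$ is computed over $\O_{S,s}$ for the non-flat $X$, and that the relevant vanishing is for the quotient $\O_X/\mathcal I^n$, not for $X$ itself. I would localize at a point $x\in X$ over $s$ and apply the long exact $\mathrm{Tor}$ sequence there. Notably, no flatness of $X$ over $S$ is needed for this argument; only the regularity of the immersion, via the local freeness of the conormal powers, and the flatness of $Z$ are used.
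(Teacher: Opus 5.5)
Your proposal is correct and follows essentially the same route as the paper. Both arguments prove that $\O_X/\mathcal I^n$ is $S$-flat by induction on the sequences $0\to\mathcal I^n/\mathcal I^{n+1}\to\O_X/\mathcal I^{n+1}\to\O_X/\mathcal I^n\to 0$, using that $\mathrm{Sym}^n(\mathcal I/\mathcal I^2)$ is locally free over $Z$; then use the resulting $\mathrm{Tor}_1$-vanishing to obtain $\mathcal I^n\otimes\kappa(s)\cong\mathcal I_s^n$; and finally use that relative $\mathrm{Proj}$ commutes with base change.
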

\begin{proof}
Let $s\in S$, and write $X_s$ (resp. $Z_s$) for the scheme-theoretic fibre of $X$ (resp. $Z$) over $s$. Let $ \mathcal I_s:=\mathcal I_{Z_s/X_s}$ and $\mathcal I =\mathcal I_{Z/X}$. For $\mathcal F$ a coherent sheaf of $\O_X$-modules, write $\mathcal F\otimes k(s)$ for $\mathcal F\otimes_{\pi^{-1}\O_S}\pi^{-1} k(s)$

Note that $\mathcal I\otimes k(s)\cong \mathcal I_{s}$. Indeed since $\O_Z$ is flat as a $\pi^{-1}\O_S$-module, the exact sequence 
$$0\longrightarrow\mathcal I\longrightarrow \O_X\longrightarrow\O_Z\longrightarrow 0$$
remains exact after tensoring with $k(s)$.

Now let us show that $\O_X/\mathcal I ^n$ is flat over $S$ for every $n\geq1$. For this we work with sequences of the form:

\begin{equation}
    \label{eq:ses}
0\longrightarrow\mathcal I ^n/\mathcal I ^{n+1}\longrightarrow \O_X/\mathcal I ^{n+1}\longrightarrow \O_X/\mathcal I ^{n}\longrightarrow 0.
\end{equation}
Since $Z\hookrightarrow X$ is a regular embedding, the conormal sheaf $\mathcal I/\mathcal I^2$ is a locally free $\O_Z$-module, then the sheaf $\mathrm{Sym}_{\O_Z}^n(\mathcal I/\mathcal I^2)\cong \mathcal I ^n/\mathcal I^{n+1}$ is also locally free (cf. \cite[Tag 063H]{stacks-project}), and hence flat over $Z$. Since $Z\to S$ is flat, this sheaf is therefore flat over $S$. Then since $\O_X/\mathcal I \cong\O_Z$ is flat, $\O_X/\mathcal I^2$ is flat by the exact sequence (\ref{eq:ses}) for $n=1$, and so the claim follows by induction on $n$.

We now show that every power of $\mathcal I$ commutes with restriction to the fibre. Since $\mathcal O_X/\mathcal I^n$ is flat over $S$, tensoring the exact sequence
$$
0\longrightarrow
\mathcal I^n
\longrightarrow
\mathcal O_X
\longrightarrow
\mathcal O_X/\mathcal I^n
\longrightarrow 0.
$$
by $k(s)$ gives an injective morphism
$$\mathcal I^n\otimes k(s)\longrightarrow \O_X\otimes k(s)\cong \O_{X_s}$$
for every $n\geq1$. The image of this morphism is the extension  of the ideal $\mathcal I^n$ under the map $\O_X\to\O_{X_s}$. Since extension of ideals commutes with taking powers, we have
\begin{align*}
\operatorname{im}\bigl(
\mathcal I^n\otimes k(s)\to\mathcal O_{X_s}
\bigr)
&=
\left(
\operatorname{im}\bigl(
\mathcal I\otimes k(s)\to\mathcal O_{X_s}
\bigr)
\right)^n \\ 
&=\mathcal I_s^n.
\end{align*}
By injectivity of the above morphism it follows that $\mathcal I^n\otimes k(s)
\cong
\mathcal I_s^n $ for every $n\geq 1$. Putting all this together,  and using the fact that relative $\operatorname{Proj}$ commutes with base change along $X_s\hookrightarrow X$, we obtain
\begin{align*}
(\operatorname{Bl}_Z X)_s
&\cong
\operatorname{Proj}_{X_s}
\left(
\left(
\bigoplus_{n\geq 0}\mathcal I^n
\right)\otimes k(s)
\right)\\
&\cong
\operatorname{Proj}_{X_s}
\left(
\bigoplus_{n\geq 0}\mathcal I_s^n
\right)\\
&=
\operatorname{Bl}_{Z_s}X_s.
\end{align*}
\end{proof}
We now show that these families satisfy property $(3)$ from page 3, which says that for every $p\in T$ the threefold $(\mathscr{X}_{ij})_p$ does not depend on $i$ or $j$. Namely, we show the following: 
\begin{lemma}
\label{lemma:fibrewise-compatibility}
    For every $p\in U_{i_1j_1}\cap U_{i_2j_2} $, the fibres $(\mathscr{X}_{i_1j_1})_p$ and $(\mathscr{X}_{i_2j_2})_p$ are isomorphic.
\end{lemma}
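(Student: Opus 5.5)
The proof splits according to where $p$ lies relative to the exceptional divisor $\Delta_2$. By construction, each $U_i$ meets $\Delta_2$ only in $E_i$. So if $p\in U_{i_1j_1}\cap U_{i_2j_2}$ with $i_1\neq i_2$, then $p\notin\Delta_2$. If $i_1=i_2$, the point $p$ lies in the overlap of the two affine charts of the same $U_i$.

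\textbf{Case 1: $p\in U_0=T\setminus\Delta_2$.} Here I will compare each family $\mathscr{X}_{ij}$ with the pullback family $\mathscr{X}_0$. The fibre of $\mathscr{X}_0$ over $p$ is $\mathscr{X}_{\varepsilon(p)}$, which is the blow-up of $\pi(H)\cap\pi(\tilde\chi(H))$ along $\pi(H)\cap S$ for $H=\{ax_0+bx_2+cx_4=0\}$. For $t_{ij}\neq0$, the plan is to write down an explicit element $g_{ij}(t,v)\in\mathrm{PGL}_3(\C)$, acting on $\P^5$ by $A\mapsto MAM^{\mathrm T}$, possibly composed with a diagonal rescaling of coordinates, with two properties. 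First, it conjugates the deformed adjugate $\chi_t$ to the standard adjugate $\chi$ up to scalar. For example, the substitution $x_5\mapsto \sqrt{t}^{\,\pm1}x_5$, with $x_1,x_3$ rescaled compatibly, turns $g_k(t)$ into $g_k$ up to monomial factors, using the identity $\mathrm{adj}(MAM^{\mathrm T})=\det(M)^2(M^{-1})^{\mathrm T}\mathrm{adj}(A)M^{-1}$. Second, it carries the hyperplane $\mathscr{H}_{ij}|_{(t,v)}$ to a hyperplane of the diagonal form $ax_0+bx_2+cx_4=0$, with $[a:b:c]=\varepsilon(t,v)$. Such a transformation maps $\mathscr{S}_{ij,(t,v)}$ to $S$, the hyperplane to the hyperplane, and $\chi_t$ of the hyperplane to $\chi$ of the image hyperplane. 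It therefore induces an isomorphism of the pairs (quadric, quartic curve), and hence of the blow-ups.

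The main obstacle is to check that the coordinates $(t_{ij},v_{ij})$ on $U_{ij}$ are precisely the blow-up coordinates of $\varepsilon$ at $p_i$ under this identification. Concretely, a diagonalising change of basis must take the quadratic form $\tfrac12(x_0+x_4)+vx_2-x_5$, twisted by the $t$-rescaling, to a diagonal form whose coefficients are affine-linear in $(t,tv)$ around $p_i$. I would first verify this for one point, say $p_1=[1:0:-1]$ in chart $j=1$. Diagonalising the $x_0,x_4,x_5$ block amounts to a $45^\circ$ rotation of the $(e_1,e_3)$-plane. I would then transport the result to the other $p_i$ using the $\mathfrak S_4$-action of $\Gamma$ on the base, which permutes $\Sigma$ transitively and is compatible with the family by $\Gamma$-equivariance. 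Once every fibre over $U_{ij}\setminus E_i$ is identified with the fibre of $\mathscr{X}_0$, any two such fibres over the same point are isomorphic.

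\textbf{Case 2: $p\in E_i\cap U_{i1}\cap U_{i2}$.} Here $t=0$ and $v_{i2}=v_{i1}^{-1}\neq0$. The two hyperplanes are $\tfrac12(x_0+x_4)+v x_2-x_5=0$ and $\tfrac{1}{2v}(x_0+x_4)+x_2-\tfrac1v x_5=0$. These differ, after rescaling the equation, by replacing $x_5$ with $x_5/v$ up to an overall factor. The plan is to find a diagonal matrix $D$ that rescales $x_5$ relative to $(x_0,x_4)$ and preserves $\chi_0$ up to scalar. Since $\chi_0$ kills the $x_3^2,x_5^2,x_1^2$ terms, it has more torus symmetry than $\chi$, so the torus weights can be chosen freely enough for this. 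By direct inspection of the $g_k(0)$, $D$ maps one fibre's data $(\mathscr{S}_0,\mathscr{H},\chi_0(\mathscr{H}))$ to the other's. If a pure torus element does not suffice, I would combine it with the swap $x_0\leftrightarrow x_4$, which is $M_2$. Alternatively, the fibre over $E_i$ can be treated as a flat limit: both charts extend the same family over the punctured neighbourhood, so their fibres are K-polystable limits of the same punctured family and are isomorphic by separatedness of K-polystable degenerations. However, that argument relies on Section~\ref{section:K-polystability}, so the explicit torus argument is preferable.
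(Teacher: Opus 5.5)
Your proposal is correct and is essentially the paper's proof: off $\Delta_2$ the paper identifies $(\mathscr{X}_{1j})_x$ with $(\mathscr{X}_0)_x$ via the rescaling $[x_0:tx_1:tx_2:tx_3:x_4:tx_5]$ followed by the action of $A_t$, which is your $45^\circ$ rotation with compensating middle entry $\sqrt{2/t}$. This lands on $\varepsilon(t,v)=[t+1:2tv:t-1]$ and is transported to the other $p_i$ by $\Gamma$. On $U_{i1}\cap U_{i2}$ the paper uses $\mathrm{diag}(1,v^{-1/2},1,v^{-1/2},1,v^{-1})$ together with $(t,v)\mapsto(vt,1/v)$, which at $t=0$ is exactly your torus element. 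The one thing your write-up leaves implicit is chart $j=2$ away from $E_i$. You can either repeat the $A_t$ computation with $t_{12}$, which gives $[tv+1:2t:tv-1]$, or, as the paper does, use your Case 2 map for all $t$, since it intertwines $\chi_t$ with $\chi_{vt}$.
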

\begin{proof}
    Note that for each $i=1,\ldots,6$ we may choose affine coordinates $t_{ij},v_{ij}$ on $U_{ij}$ for $j=1,2$ which are related over $U_{i1}\cap U_{i2}$ by:
\begin{equation}
\label{eq:coordinates}
t_{i2}=t_{i1}v_{i1}, v_{i2}=v_{i1}^{-1}.
\end{equation}
Let $x\in U_{i1}\cap U_{i2}$. Since $v_{i1}(x)\neq0$, we may choose
a square root of $v_{i1}(x)$. Then the transformation
$$
\begin{aligned}
\Bigl(
[x_0:x_1:x_2:x_3:x_4:x_5],t_{i1},v_{i1}
\Bigr)
\mapsto
\Biggl(
\left[
x_0:
\frac{x_1}{\sqrt{v_{i1}}}:
x_2:
\frac{x_3}{\sqrt{v_{i1}}}:
x_4:
\frac{x_5}{v_{i1}}
\right],
v_{i1}t_{i1},
\frac{1}{v_{i1}}
\Biggr)
\end{aligned}
$$
identifies the base schemes and defining equations associated with
the two fibres. It therefore lifts to an isomorphism
$$
(\mathscr X_{i1})_x
\xrightarrow{\sim}
(\mathscr X_{i2})_x.
$$
Here this formula is understood fibrewise; the choice of square root
need not vary algebraically over the whole overlap.

We may therefore assume that $j_1=j_2$. In other words, it remains to prove that $(\mathscr{X}_{i_1j})_x\cong(\mathscr{X}_{i_2j})_x$ for every $x\in U_{i_1j}\cap U_{i_2j}$, and every $i_1,i_2\in\{1,\ldots,6\}$ and $j\in\{1,2\}$.

For this, note that each $U_{i_1j}\cap U_{i_2j}$ is contained in $U_0$ when $i_1\neq i_2$, and hence it is enough to  show that for every $x\in U_{ij}\setminus (E_i\cap U_{ij})$ the fibres of $\mathscr{X}_{ij}$ and $\mathscr{X}_0$ over $x$ are isomorphic. By $\Gamma$-equivariance, it suffices to show that this holds just for one $i$, say the open set $U_1$ corresponding to the point $p_1=[1:0:-1]$. Indeed, the group $\Gamma$ acts transitively on the set $\Sigma$, and hence its induced action on the open cover $\{U_i\}$ is transitive. Thus, for any $x\in U_{ij}\setminus (E_i\cap U_{ij})$ there exists a $g\in \Gamma$ such that $g(x)\in U_{1j}\setminus (E_1\cap U_{1j})$, yielding isomorphisms of fibres:
\begin{equation}
\label{eq:fibre isomorphism}
(\mathscr{X}_{ij})_x\cong( \mathscr{X}_{1j})_{g(x)}\cong(\mathscr{X}_0)_{g(x)}\cong(\mathscr{X}_0)_x,    
\end{equation}
for $j=1,2$. To prove the isomorphism statement over $U_0\cap U_1$, observe that we may choose affine coordinates $(t_{11},v_{11})$ and $(t_{12},v_{12})$ on $U_{11}$ and $U_{12}$ respectively such that:
\begin{enumerate}
    \item the blow-up map $\varepsilon$ is given locally on $U_{11}$ by:
$$(t_{11},v_{11})\mapsto[t_{11}+1:2t_{11}v_{11}:t_{11}-1],$$
and on $U_{12}$ by:
$$(t_{12},v_{12})\mapsto[t_{12}v_{12}+1:2t_{12}:t_{12}v_{12}-1],$$
    \item the coordinates $(t_{11},v_{11})$ and $(t_{12},v_{12})$ are related over $U_{11}\cap U_{12}$ by (\ref{eq:coordinates}). 
\end{enumerate}
Then for any $x=(t_{11},v_{11})\in U_{11}\setminus ( E_1\cap U_{11})$, there exists an isomorphism $(\mathscr{X}_{11})_x\cong \mathscr{X}_{\varepsilon(x)}\cong(\mathscr{X}_0)_x$ given by the scaling operation:
$$\big([x_0:x_1:x_2:x_3:x_4:x_5],t_{11},v_{11}\big)\mapsto \big([x_0:t_{11}x_1:t_{11}x_2:t_{11}x_3:x_4:t_{11}x_5],t_{11},v_{11}\big) $$
followed by the action (in the sense of (\ref{subsec:group action})) by the matrix
$$
A_{t_{11}}:=
\begin{pmatrix}
1 & 0 & -1 \\
0 & \sqrt{2/t_{11}} & 0 \\
1 & 0 & 1
\end{pmatrix}.
$$
A direct calculation shows that the resulting projective
transformation identifies the base scheme and defining equations of
$(\mathscr X_{11})_x$ with those of
$\mathscr X_{\varepsilon(x)}$. It therefore lifts to the corresponding
blow-ups and induces an isomorphism
$$
(\mathscr X_{11})_x
\cong
\mathscr X_{\varepsilon(x)}
\cong
(\mathscr X_0)_x.
$$
For the family $\mathscr{X}_{12}\to U_{12}$, we obtain isomorphisms $(\mathscr{X}_{12})_x\cong (\mathscr{X}_0)_x$ for every $x=(t_{12},v_{12})\in U_{12}\setminus U_{12}\cap E_1$ just as above by swapping $t_{11}$ with $t_{12}$.
\end{proof}
\begin{remark}[Geometric description of the fibres of $\mathscr{X}_{ij}\to U_{ij}$]
\label{remark:fibres-description}
    By Lemma \ref{lemma:blow-up-fibres}, the fibres of $\{\mathscr{X}_{ij}\to U_{ij}\}$ are threefolds of the form $\mathrm{Bl}_C(Q)$, where $Q\subset\P^4$ is an irreducible quadric threefold and $C\subset Q$ is a (not necessarily integral) curve of degree four along which $Q$ is smooth. Away from the exceptional curve $E_i\subset U_{ij}$, by the results of the preceding paragraphs the fibres of $\mathscr{X}_{ij}\to U_{ij}$ are isomorphic to fibres of $\mathscr{X}_0\to U_0$, which are described in Lemma \ref{lemma:Fanoness}. On the other hand, for fibres of $\mathscr{X}_{i1}$ over $E_i\cap U_{i1}$ the quadric $Q$ is given by the equations:
    $$
    \begin{cases}
        \dfrac{1}{2}(x_0+x_4) + v_{i1}x_2 - x_5 = 0, \vspace{0.5em}\\
        \dfrac{1}{2}(x_0x_2+x_2x_4) + v_{i1}x_0x_4 - x_1x_3 + x_2x_5  = 0,
    \end{cases}
    $$    
    and the curve $C$ is the union of two disjoint lines and a conic, given by:
    $$\{x_0=x_1=x_2=x_4-2x_5=0\}\cup \{x_0=x_4= x_5-v_{i1}x_2=x_1x_3-v_{i1}x_2^2=0\}\cup \{x_2=x_3=x_4= x_0-2x_5=0\}.$$
    For the fibres of $\mathscr{X}_{i2}$ over $E_i\cap U_{i2}$ the quadric $Q$ is given by the equations:
    $$
    \begin{cases}
        \dfrac{v_{i2}}{2}(x_0+x_4) + x_2 - x_5 = 0, \vspace{0.5em}\\
        \dfrac{v_{i2}}{2}(x_0x_2+x_2x_4) + x_0x_4 - x_1x_3 + x_2x_5  = 0,
    \end{cases}
    $$    
    and the curve $C$ is given by:
    $$
    \{x_0=x_1=x_2=2x_5-v_{i2}x_4=0\}\cup \{x_0=x_4= x_5-x_2= x_1x_3-x_2^2=0\}\cup\{x_2=x_3=x_4= 2x_5-v_{i2}x_0=0\}.
    $$
    Then for each value of $v_{i1}\in E_i\cong\P^1$, we have the following:
\begin{enumerate}
    \item $\mathrm{Bl}_{C_{1,2,1}}(Q^\mathrm{sm})$, the general fibre,
    \item $\mathrm{Bl}_{C_{1,2,1}}(Q^\mathrm{sm})$, the fibre over $v_{i1}=v_{i2}^{-1}=\infty$, which is invariant under the action of the torus in $\mathrm{PGL}_3(\C)$ given by:
    $$
    \begin{pmatrix}
        1 & 0 & 0 \\ 
        0 & \lambda &  0 \\
        0 & 0 &\lambda^2
    \end{pmatrix},
    $$
    \item $\mathrm{Bl}_{C_{1,2,1}}(Q^\mathrm{iso})$, the fibre over $v_{i1}=\pm 1$,
    \item $\mathrm{Bl}_{C_{1,1,1,1}}(Q^\mathrm{iso})$, the fibre over $v_{i1}=0$.
\end{enumerate}
\end{remark}
To complete the proof that the families $\{\mathscr{X}_{ij}\to U_{ij}\}$ satisfy the desired properties, it remains to show that the fibre-isomorphism relation between the members is given precisely by orbits of the $\Gamma$-action on $T$.
\begin{lemma}
\label{lemma:T-orbit-relation}
    For all $x,y\in T$, and any $U_{i_1j_1}\ni x$ and $U_{i_2,j_2}\ni y$, we have that $y\in \Gamma\cdot x$ if and only if $(\mathscr{X}_{i_1j_1})_x\cong (\mathscr{X}_{i_2j_2})_y$.
\end{lemma}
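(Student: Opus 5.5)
We reduce the statement to Lemma \ref{lemma:orbits} by comparing with the family over $\P^2$. First we need the $\Gamma$-action on $T$. Since $\Sigma$ is $\Gamma$-invariant, the action on $\P^2$ lifts to $T$, and $\Gamma$ permutes the exceptional curves $E_i$ transitively. We then split into cases according to whether $x$ and $y$ lie in $\Delta_2$. Fibres over $\Delta_2$ are of the form $\mathrm{Bl}_{C_{1,2,1}}Q$ or $\mathrm{Bl}_{C_{1,1,1,1}}Q^\mathrm{iso}$, where the curve is reducible. Fibres over $U_0$ are blow-ups along $C_4$ or $C_{2,2}$ (or $C_{2,2}^\mathrm{nred}$). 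These two kinds of fibre are never isomorphic. To see this, note that the argument of Lemma \ref{lemma:orbits} (Picard rank $2$, nef cone spanned by $H|_X$ and $H'|_X$) shows that any isomorphism identifies the blow-up centres, up to composing with $\tilde\chi$. Moreover, $\tilde\chi$ preserves the isomorphism type of the centre, because it exchanges the two contractions of the same variety. Comparing the number and degrees of the components of $C$ then rules out mixed isomorphisms, both for $\mathrm{Bl}_{C_{1,2,1}}$ versus $\mathrm{Bl}_{C_{2,2}}$ and for the other pairings.

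\textbf{Case $x,y\in U_0$.} By Lemma \ref{lemma:fibrewise-compatibility} and equation (\ref{eq:fibre isomorphism}), $(\mathscr{X}_{i_1j_1})_x\cong\mathscr{X}_{\varepsilon(x)}$ and $(\mathscr{X}_{i_2j_2})_y\cong\mathscr{X}_{\varepsilon(y)}$. Here $\varepsilon(x),\varepsilon(y)\in\P^2\setminus\Sigma$, and $\varepsilon$ is $\Gamma$-equivariant and bijective off $\Delta_2$. The claim therefore follows directly from Lemma \ref{lemma:orbits}.

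\textbf{Case $x,y\in\Delta_2$.} Acting by $\Gamma$, which is compatible with the families in the sense of (\ref{eq:fibre isomorphism}), we may assume $x,y\in E_1$. Using the coordinate change of Lemma \ref{lemma:fibrewise-compatibility}, we may also take both points in the chart $U_{11}$, with coordinates $v=v_{11}(x)$ and $v'=v_{11}(y)$. The backward direction follows from $\Gamma$-equivariance. For the forward direction we rerun the proof of Lemma \ref{lemma:orbits}. An isomorphism of fibres preserves or swaps the rays $H|_X$ and $H'|_X$, and after composing with $\tilde\psi_{11}$ if necessary it preserves $H$. It therefore induces a linear isomorphism of the spanning hyperplanes that carries the curve $C_{1,2,1}$ (or $C_{1,1,1,1}$) of Remark \ref{remark:fibres-description} to the other one. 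This linear map must also preserve the degenerate surface $(\mathscr{S}_{11})_0$ that contains them. Here the obstacle is that the Veronese argument no longer applies directly. Instead we would compute the stabiliser in $\mathrm{PGL}_5$ of the degenerate surface $\{g_k(0)=0\}$, a union of planes and a quadric cone, together with the involution $\chi_0$. We expect this stabiliser to contain the torus $\mathrm{diag}(1,\lambda,\lambda^2)$ from Remark \ref{remark:fibres-description} and finitely many extra elements. A direct calculation should then show that two hyperplanes $\mathscr{H}_{11}|_{t=0}$ with parameters $v$ and $v'$ are related by this stabiliser exactly when $v'=\pm v$. The expected $\pm$ symmetry is consistent with the special fibres at $v=\pm1$ and with the invariance of the fibres over $0$ and $\infty$. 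Finally we check that $v\mapsto -v$ is realised by the stabiliser of $E_1$ in $\Gamma$, which has order $96/6=16$ and acts on $E_1\cong\P^1$ through a group of order $2$, presumably generated by the image of $M_1$.

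\textbf{Main obstacle.} The hardest step is this last computation: identifying the automorphism group of the degenerate configuration, and showing that its action on the parameter $v$ agrees exactly with the action of $\mathrm{Stab}_\Gamma(E_1)$ on $E_1$. We would do this by writing down how the linear maps act on the three (or four) components of $C$, using that the lines and conic must be permuted compatibly with their incidence graph, and then checking the surviving maps by computer as in Lemma \ref{lemma:orbits}.
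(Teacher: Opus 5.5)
Your overall architecture matches the paper's. You separate $U_0$ from $\Delta_2$, and your nef-cone/centre comparison for why these never mix is more explicit than the paper's one-line assertion. You then handle $U_0$ by Lemma \ref{lemma:orbits}, and on $\Delta_2$ reduce to $E_1$, descend to a linear isomorphism and show $v'=\pm v$. The $\Delta_2$ case has real problems, though.

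First, you cannot place both points in $U_{11}$. The change of coordinates (\ref{eq:coordinates}) only exists on $U_{11}\cap U_{12}$, and the point $v_{11}=\infty$ (i.e.\ $v_{12}=0$, the torus-invariant fibre $\mathrm{Bl}_{C_{1,2,1}}(Q^\mathrm{sm})$) lies only in $U_{12}$. Its fibre has two nodes, like the general fibre, so nothing in your argument separates it from the others. The paper instead first discards $v_{11}=0$: it is a $\Gamma_{p_1}$-fixed point, and its fibre is the only one with four nodes. It then works entirely in $U_{12}$. In addition, the backward direction on $\Delta_2$ is not supplied by Lemma \ref{lemma:fibrewise-compatibility}, whose isomorphisms only concern points off the exceptional curves. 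You need explicit lifts, such as the paper's $[x_0:ix_1:-x_2:ix_3:x_4:x_5]$, $v_{11}\mapsto -v_{11}$ for $M_1$.

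Second, and more seriously, your key computation targets the wrong object. The surface $\{g_k(0)=0\}$ is $P_1\cup P_2\cup Q_0$, where $P_1=\{x_0=x_1=x_2=0\}$, $P_2=\{x_2=x_3=x_4=0\}$, and $Q_0=\{x_0=x_4=x_1x_3-x_2x_5=0\}$ is a smooth quadric surface, not a cone. Its stabiliser is far from ``a torus plus finitely many elements''. For example, scaling $x_1$ and $x_2$ by $\kappa$ preserves all three components and sends the hyperplane $\Pi_v=\mathscr{H}_{11}|_{t_{11}=0}$ to $\Pi_{v/\kappa}$. So all your hyperplanes with $v\neq0$ are related by this stabiliser, and ``related exactly when $v'=\pm v$'' is false. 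The chain $C_{1,2,1}\subset\P^4$ has no moduli; the parameter $v$ is carried by the quadric $Q_v=\Pi_v\cap\chi_0(\Pi_v)$. Your claim that $\bar\varphi$ extends to this stabiliser at all is also only asserted.

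The fix is the paper's route: compute directly in $\P^4$ with both $Q$ and $C$. In $U_{12}$, after eliminating $x_5$, the curve is independent of $w=v_{12}$, namely $\{x_0=x_1=x_2=0\}\cup\{x_0=x_4=x_1x_3-x_2^2=0\}\cup\{x_2=x_3=x_4=0\}$, and $Q_w=\{x_0x_4-x_1x_3+x_2^2+wx_2(x_0+x_4)=0\}$. A component-preserving automorphism of $(\P^4,C)$ is $\mathrm{diag}(s_0,\dots,s_4)$ with $s_2^2=s_1s_3$, composed with unipotents along the two lines. The unipotents would create $x_0x_3$ or $x_1x_4$ terms, so they must vanish. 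Matching the remaining coefficients then forces $w'=\pm w$ (for $w\neq0$ one gets $s_0=s_4=\pm s_2$). The swap $x_0\leftrightarrow x_4$, $x_1\leftrightarrow x_3$ preserves every $Q_w$, so it does not change this conclusion.
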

\begin{proof}
    Since $U_0\subset T$ is $\Gamma$-stable and fibres of $\{\mathscr{X}_{ij}\to U_{ij}\}$ over points of $\Delta_2=T\setminus U_0$ are never isomorphic to fibres of $\mathscr{X}_0\to U_0$, then we may assume that either $x,y\in U_0$ or $x,y \in \Delta_2$. Assume first the former. Then by (\ref{eq:fibre isomorphism}), there exist isomorphisms:
    \begin{align*}
        (\mathscr{X}_{i_1j_1})_x&\cong \mathscr{X}_{\e(x)}, \\
        (\mathscr{X}_{i_2j_2})_y&\cong \mathscr{X}_{\e(y)}
    \end{align*}
    for any open subsets $U_{i_1j_1}\ni x$ and $U_{i_2j_2}\ni y$ in the cover. By Lemma \ref{lemma:orbits} we have that $(\mathscr{X}_{i_1j_1})_x\cong(\mathscr{X}_{i_2j_2})_y$ if and only if $\e(y)\in \Gamma\cdot \e(x)$, and hence if and only if $y\in \Gamma\cdot x$.

    Now suppose that $x,y\in \Delta_2$; therefore $x\in E_1$ and $y\in E_i$ for some $i$, after renumbering the indices of the exceptional divisors $\{E_i\,|\,i=1,\ldots,6\}.$ The stabiliser $\Gamma_{p_1}$ of the point $p_1$ is generated by the matrices $M_1,M_2$, which act via the induced projective representation on $E_1\cong \P(T_{p_1}\P^2)$ by:
    \begin{align*}
        M_1\colon v_{11}, v_{12}&\mapsto-v_{11},-v_{12} \\
        M_2\colon v_{11}, v_{12}&\mapsto v_{11}, v_{12}. 
    \end{align*}
    There is also an action of the group $\Gamma_{p_1}$ on the total spaces of the restricted families over $E_1$:
    $$\mathscr{X}_{11}\times_{U_{11}}(E_1\cap U_{11})\to E_1\cap U_{11}\quad \text{ and } \quad\mathscr{X}_{12}\times_{U_{12}}(E_1\cap U_{12})\to E_1\cap U_{12}$$
    such that they are $\Gamma_{p_1}$-equivariant, given by the projective transformations:
    \begin{align*}
    M_1\colon \mathscr{X}_{11}\times_{U_{11}}(E_1\cap U_{11}) &\longrightarrow \mathscr{X}_{11}\times_{U_{11}}(E_1\cap U_{11}), \\
    \big([x_0:x_1:x_2:x_3:x_4:x_5],  v_{11}\big) &\longmapsto \big([x_0:ix_1:-x_2:ix_3:x_4:x_5],  -v_{11}\big),
    \end{align*}
    and 
    \begin{align*}
    M_1\colon \mathscr{X}_{12}\times_{U_{12}}(E_1\cap U_{12}) &\longrightarrow \mathscr{X}_{12}\times_{U_{12}}(E_1\cap U_{12}), \\
    \big([x_0:x_1:x_2:x_3:x_4:x_5],  v_{12}\big) &\longmapsto \big([-x_0:x_1:x_2:x_3:-x_4:x_5],  -v_{12}\big)        
    \end{align*}
    while $M_2$ acts trivially on the base $E_1$ and fibrewise by the projective transformation:
    $$\big([x_0:x_1:x_2:x_3:x_4:x_5], v_{1j}\big)\longmapsto\big( [x_4:x_3:x_2:x_1:x_0:x_5], v_{1j}\big)$$
    for $j=1,2$. We thus have that fibres of $\mathscr{X}_{11}\times_{U_{11}}(E_1\cap U_{11})$ and $\mathscr{X}_{12}\times_{U_{12}}(E_1\cap U_{12})$ over each $\Gamma_{p_1}$-orbit in $E_1$ are isomorphic, and so fibres of the restricted families
    $$\big\{\mathscr{X}_{ij}\times_{U_{ij}}(E_i\cap U_{ij})\to E_i\cap U_{ij}\big\}$$ over each $\Gamma$-orbit in $\Delta_2$ are isomorphic. 
    Now suppose that there exists an abstract isomorphism
$$
\phi\colon (\mathscr{X}_{1j_1})_x\xrightarrow{\sim}(\mathscr{X}_{ij_2})_y.
$$
Choose $g\in\Gamma$ such that $g(E_i)=E_1$. By the $\Gamma$-equivariance of the construction, $g$ induces an isomorphism from $(\mathscr{X}_{ij_2})_y$ to the corresponding fibre over $g(y)\in E_1$. Replacing $y$ by $g(y)$ and composing $\phi$ with this isomorphism, we may therefore assume that $x,y\in E_1$.
Note that the point $0\in E_1$ given by $v_{11}=0$ is a fixed-point for the $\Gamma_{p_1}$-action. Moreover, the fibre $(\mathscr{X}_{11})_0$ over it has four isolated ordinary double points, whereas the fibres of $\mathscr{X}_{11}$ and $\mathscr{X}_{12}$ over every other point of $E_1$ have just two or three singular points, and hence are not isomorphic to $(\mathscr{X}_{11})_0$. Thus, we may assume that $x,y\in E_1\cap U_{12}$. After possibly replacing $\phi$ by $
\phi\circ\widetilde{\psi}_{12}|_{(\mathscr{X}_{12})_x}, $
where $\widetilde{\psi}_{12}$ exchanges the two extremal rays of $\mathrm{Nef}((\mathscr{X}_{12})_x)$, we may assume that $\phi$ preserves the extremal ray corresponding to the blow-down morphisms
$$
(\mathscr{X}_{12})_x
\cong
\mathrm{Bl}_{(\mathscr{C}_{12})_x}(\mathscr{Q}_{12})_x
\longrightarrow
(\mathscr{Q}_{12})_x,\quad\text{ and }\quad
(\mathscr{X}_{12})_y
\cong
\mathrm{Bl}_{(\mathscr{C}_{12})_y}(\mathscr{Q}_{12})_y
\longrightarrow
(\mathscr{Q}_{12})_y.
$$
It follows that $\phi$ descends to an isomorphism
$$
\overline{\phi}\colon
(\mathscr{Q}_{12})_x\xrightarrow{\sim}(\mathscr{Q}_{12})_y
$$
which maps $(\mathscr{C}_{12})_x$ isomorphically onto $(\mathscr{C}_{12})_y$. Moreover, $\overline{\phi}$ preserves the hyperplane class, and hence is induced by a projective linear isomorphism of the ambient $\P^4$'s, which may be extended to a projective transformation of $\P^5$. Using the explicit equations for $(\mathscr{Q}_{12})_x$ and $(\mathscr{C}_{12})_x$ given in Remark \ref{remark:fibres-description}, a direct calculation shows that
$$
v_{12}(y)=\pm v_{12}(x).
$$
Since the nontrivial element in the effective action of $\Gamma_{p_1}$ on $E_1$ sends $v_{12}\mapsto-v_{12}$, it follows that $x$ and $y$ lie in the same $\Gamma_{p_1}$-orbit.
\end{proof}

\section{K-polystability of the fibres}
\label{section:K-polystability}
In this section, we show that every fibre of the families $\{\mathscr{X}_{ij}\to U_{ij}\}$ is K-polystable. Let us first consider their smooth fibres.
\begin{lemma}
\label{lemma:smooth}
    Let $X$ be a fibre of $\{\mathscr{X}_{ij}\to U_{ij}\}$, and suppose that $X$ is smooth. Then $X$ is K-polystable.    
\end{lemma}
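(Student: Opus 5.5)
Smooth fibres of the families $\{\mathscr{X}_{ij}\to U_{ij}\}$ are exactly the smooth members of family \textnumero2.21. By Remark \ref{remark:fibres-description} and Lemma \ref{lemma:Fanoness}, a smooth fibre lies over a point of $U_0\setminus(\Delta_1\cup\Delta_3)$. There the fibre is $\mathrm{Bl}_{C_4}Q$, where $Q=\pi(H)\cap\pi(\tilde\chi(H))$ is a smooth quadric and $C_4=S\cap\pi(H)$ is a rational normal quartic. The fibres over $\Delta_2$ always have singular points, as listed in the remark, so they do not arise. The plan is to reduce to the known criterion of \cite{K-STAB}: for smooth members of \textnumero2.21, K-polystability is equivalent to GIT-polystability of the corresponding point of $\P\big(H^0(\P^4,\mathcal{I}_{C_4}(2))\big)$ under the $\mathrm{PGL}_2(\C)$-action. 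This action is induced by the stabiliser of $C_4$, namely $\mathrm{PGL}_2(\C)$ acting on $\P^4=\P(\mathrm{Sym}^4\C^2)$.

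First I will identify the quadric. After identifying $\pi(H)\cong\P^4$ and $C_4$ with the standard rational normal quartic, $\pi(H)\cap\pi(\tilde\chi(H))$ is cut out by the restriction of the quadric $\frac12$-weighted combination $ag_0+bg_2+cg_4$ of adjugate entries. This lies in the $5$-dimensional space $H^0(\mathcal I_{C_4}(2))$, since the $g_k$ generate the ideal of $S$. With coordinates $x_0,x_2,x_4$ on the diagonal, $ax_0+bx_2+cx_4=0$ is a hyperplane section of the Veronese surface by a conic-apolar condition. The corresponding $C_4$ is the image of the conic $\{ax^2+by^2+cz^2=0\}\subset\P^2$ under the Veronese map. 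So the point of the parameter space determined by the fibre is the quadric $ag_0+bg_2+cg_4$ restricted to this conic.

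Next I will use the $\mathrm{PGL}_2(\C)$-equivariant decomposition $H^0(\mathcal I_{C_4}(2))\cong \mathrm{Sym}^4\C^2\oplus\C$ (the trivial summand is the quadric containing the secant-free invariant; equivalently, pencils of binary quartics together with the $\mathrm{SL}_2$-invariant quadric). I will compute the binary quartic attached to $ag_0+bg_2+cg_4$. I expect it to be, up to the trivial summand, a quartic whose roots correspond to the $4$ points of the conic lying on the coordinate lines' special configuration, with distinct roots exactly when $(a\pm b\pm c)abc\neq 0$. By classical invariant theory, a binary quartic is GIT-stable iff it has no root of multiplicity $\ge2$. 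The trivial summand is fixed by the group, so it does not affect the criterion. Hence, off $\Delta_1\cup\Delta_3$, the quartic has four distinct roots, the point is GIT-stable (or polystable), and \cite{K-STAB} gives K-polystability.

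The main obstacle is this explicit identification: matching the paper's $\P^2$ of parameters $[a:b:c]$ with the $\mathrm{PGL}_2$-GIT data of \cite{K-STAB}, and checking that the smoothness locus corresponds exactly to GIT-stable points. If the direct matching proves messy, I will argue more softly. A smooth fibre is a smooth member of \textnumero2.21, and the GIT-semistable locus is open and $\mathrm{PGL}_2$-invariant. So it suffices to show that a smooth $\mathrm{Bl}_{C_4}Q$ is never GIT-unstable. I will check this with the Hilbert--Mumford criterion against the $1$-parameter subgroups $\mathrm{diag}(\lambda,\lambda^{-1})$: instability forces $Q$ to contain high-order osculating data of $C_4$ at a point. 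That would make $Q$ singular or tangent to $C_4$ in a way that makes the blow-up singular, contradicting smoothness. Strictly semistable points then correspond to positive-dimensional stabilisers, whose closed orbits are polystable. Either way, \cite{K-STAB} concludes the proof.
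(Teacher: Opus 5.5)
Your reduction to the GIT criterion of \cite{K-STAB} is a legitimate strategy, but the step that carries it fails. You expect the quartic component $f$ of the point $[f:c]\in\P(\mathrm{Sym}^4\C^2\oplus\C)$ to have four distinct roots for every $[a:b:c]\notin\Delta_1\cup\Delta_3$, which would make the point GIT-stable. This is false.

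Take $[a:b:a]$ with $ab\neq0$ and $b\neq\pm2a$, so the fibre is smooth. Every complex rotation $M$ in the $(x,z)$-plane satisfies $M^{T}BM=B$ for $B=\mathrm{diag}(a,b,a)$, and also $M^{-T}=M$. By the identity $\mathrm{adj}(MAM^{T})=\det(M)^2M^{-T}\mathrm{adj}(A)M^{-1}$, it therefore preserves $S$, $H=\{\mathrm{tr}(BA)=0\}$ and $\tilde\chi(H)=\{\mathrm{tr}(B\,\mathrm{adj}A)=0\}$. So this smooth fibre carries a $\mathbb{G}_m$-action, and its point has a positive-dimensional stabiliser. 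Hence $f$ is a $\mathbb{G}_m$-weight vector with repeated roots, and the point cannot be stable. The same happens along the $\Gamma$-translates $a=\pm b$, $a=\pm c$, $b=\pm c$. At $[1:1:1]$ all of $\mathrm{SO}_3(\C)$ acts and $f=0$; this is the member with $\mathrm{Aut}^0\cong\mathrm{PGL}_2(\C)$. So what you must prove is GIT-polystability, and your main argument gives no way to get it. (Minor: $h^0(\mathcal{I}_{C_4}(2))=6$, not $5$.)

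Your fallback cannot fill this gap, because it uses only smoothness of $X$. Some smooth members of \textnumero2.21 are not K-polystable. The member with $\mathrm{Aut}^0\cong\mathbb{G}_a$ corresponds to $[x^4:c]$ with $c\neq0$. It is GIT-semistable, but its orbit closure contains the $\mathrm{PGL}_2(\C)$-fixed point $[0:1]$. Quartics with exactly one double root give further smooth members that are strictly semistable and not polystable, and these have finite stabilisers. So ``never GIT-unstable'' does not give polystability. Your claim that strictly semistable points have positive-dimensional stabilisers with closed orbits is also false.

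The missing ingredient is the symmetry the paper uses. Every fibre of $\mathscr{X}_0$ is invariant under $\langle\mathrm{diag}(1,1,-1),\mathrm{diag}(1,-1,1)\rangle$, so $\Z_2\times\Z_2\subseteq\mathrm{Aut}(Q,C_4)$, and the paper concludes at once from \cite[Main Theorem]{K-STAB}. If you want to stay within GIT, use this Klein group $V\subset\mathrm{PGL}_2(\C)$. It fixes $[f:c]$, so the root divisor of $f$ is $V$-invariant. Then exactly one of three cases holds:
\begin{itemize}
\item $f$ has four distinct roots, and the point is stable;
\item $f$ is the square of the quadratic form vanishing at the two fixed points of an involution in $V$; then $[f:c]$ is $\mathbb{G}_m$-fixed, and a Hilbert--Mumford check shows its orbit is closed in the semistable locus;
\item $f=0$.
\end{itemize}
In every case the point is GIT-polystable, which is what the equivalence requires.
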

\begin{proof}
    By (\ref{eq:fibre isomorphism}), we have that $X$ is isomorphic to a fibre of the family $\mathscr{X}_0\to U_0$. Then the smooth quadric threefold $Q$, which $X$ is the blow-up of, is invariant under the action of the subgroup of $\mathrm{PGL}_3(\C)$ generated by:
$$\begin{pmatrix}
    1 & 0 & 0 \\ 
    0 & 1  & 0 \\
    0 & 0 & -1
\end{pmatrix},\quad
\begin{pmatrix}
    1 & 0 & 0 \\ 
    0 & -1 & 0 \\
    0 & 0 & 1
\end{pmatrix}
$$
which is isomorphic to $\Z_2\times\Z_2$. Therefore $\Z_2\times\Z_2\subseteq\mathrm{Aut}(Q,C_4)$, and so $X$ is K-polystable by \cite[Main Theorem]{K-STAB}.
\end{proof}

For the remainder of this section, we prove that all singular fibres of $\{\mathscr{X}_{ij}\to U_{ij}\}$ are K-polystable. We will first prove, in Lemma \ref{lemma:isolated}, that fibres with isolated singularities are K-polystable. Then in Lemma \ref{lemma:non-isolated} we prove that the unique (up to isomorphism) fibre with non-isolated singularities is K-polystable.

\subsection{Preliminaries}
Let us prepare some preliminary results which can be used in most cases. If $X$ is a threefold appearing as a fibre of $\{\mathscr{X}_{ij}\to U_{ij}\}$, with involution $\chi$, then we write $\mathrm{Ex}(X)$ for the union of all the exceptional divisors of $X$; that is the divisor,
$$\mathrm{Ex}(X):=E \cup \chi(E),$$
where $E$ is the exceptional locus of the blow-up $X\to Q$.

\begin{lemma}
    \label{lemma:delpezzo}
    Let $Q\subset \P^4$ be a quadric which is either smooth or has an isolated singularity, and let $X$ be the blow-up along a curve $C\subset Q$ which is one of the reduced curves listed in Theorem \ref{theorem:main}. Fix a smooth point $x\in X\setminus\mathrm{Ex}(X)$, and let $S$ be the pullback of a general hyperplane section of $Q$ containing $x$. Then:
    \begin{enumerate}
        \item[$(\mathrm{i})$] $S$ is a smooth del Pezzo surface of degree 4,
        \item[$(\mathrm{ii})$] The point $x$ doesn't lie on any exceptional curve in $S$.
    \end{enumerate}
\end{lemma}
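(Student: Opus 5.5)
The plan is to work on $Q$ rather than on $X$. Write $f\colon X\to Q$ for the blow-up and put $y:=f(x)$. Because $x\notin \mathrm{Ex}(X)$, the point $y$ lies off $C$, and since $x$ is a smooth point of $X$ we may assume $y\notin\mathrm{Sing}(Q)$. Let $\Pi_y\cong\P^3$ be the (three-dimensional) linear system of hyperplanes through $y$, and let $H\in\Pi_y$ be general with $S_Q:=H\cap Q$. Then $S:=f^*S_Q$ is the blow-up of $S_Q$ at the points $H\cap C$. There are four such points counted with multiplicity, since $\deg C=4$.

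For (i) I would verify three things.

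First, $S_Q$ is a smooth quadric surface. The singular members of $\Pi_y|_Q$ are exactly the tangent hyperplanes $T_zQ$ with $z\in Q$ smooth and $y\in T_zQ$, together with the hyperplanes through the vertex if $Q=Q^{\mathrm{iso}}$. Each of these loci has codimension $\geq1$ in $\Pi_y$.

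Second, $H$ meets $C$ transversally in four distinct smooth points of $C$. This is Bertini for the linear system cut on $C$ by hyperplanes through $y$. It is base-point free on $C$ because $y\notin C$, and its general member avoids the finitely many nodes of $C$. Reducedness of the intersection holds because $C$ is reduced, nondegenerate, and no component of $C$ is contracted by projection from $y$. The one case needing a check is a line component of $C$ passing through $y$, which is impossible since $y\notin C$.

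Third, no three of these four points lie on a line of $S_Q$, and no two lie on a common ruling. Given this, $S$ is the blow-up of $\P^1\times\P^1$ in four points in general position, i.e. a smooth del Pezzo surface of degree $8-4=4$. The smoothness of $S$ also uses that $Q$ is smooth along $C$, so that the blow-up of $X$ restricts to the blow-up of $S_Q$ at $H\cap C$.

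For (ii), the $(-1)$-curves on a quartic del Pezzo obtained this way are of three kinds: the 4 exceptional curves; the $2\cdot 4=8$ strict transforms of rulings of $S_Q$ through one of the points $p_k\in H\cap C$; and the 4 strict transforms of conics on $S_Q$ through three of the $p_k$. The first kind lies in $E$, which does not contain $x$. For the second kind, the two rulings of $S_Q$ through $y$ are the lines $\ell\subset Q$ through $y$ contained in $H$. I will show that for general $H\in\Pi_y$ neither of them meets $C$. The lines of $Q$ through $y$ sweep out the cone $T_yQ\cap Q$, which meets $C$ in finitely many points (at most 8, since $C$ is not contained in $T_yQ$). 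So only finitely many lines through $y$ meet $C$, and requiring $H$ to contain one of them is a proper closed condition on $\Pi_y$. For the third kind, the conic $H\cap\langle y,p_a,p_b,p_c\rangle\cap Q$ contains $y$ only when $y,p_a,p_b,p_c$ are coplanar. As $H$ varies, the triples of points of $C$ move in a 3-dimensional family, whereas coplanarity with $y$ imposes one condition. This condition is nontrivial because $C$ spans $\P^4$ and is not contained in a cone over a curve with vertex $y$. Hence a general $H$ avoids it.

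I expect the main obstacle to be the genericity statements for the reducible curves $C_{1,2,1}$, $C_{2,2}$ and $C_{1,1,1,1}$, namely Bertini transversality and the coplanarity condition. These can degenerate when a component of $C$ lies in a plane through $y$, for instance a line or conic component together with $y$ spanning a plane or a $\P^3$. I would handle this case by case using the explicit components from Remark \ref{remark:fibres-description}. The key inputs are that the four intersection points are spread over different components (or are two points on a conic, etc.) and that $y\notin C$. This should reduce each degenerate configuration to a proper closed subset of $\Pi_y$.
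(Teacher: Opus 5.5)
Your proof of (ii) has a genuine gap. You only ever use half of the hypothesis $x\notin\mathrm{Ex}(X)$, namely $y\notin C$, and never $x\notin\tilde{\chi}(E)$. For the reducible curves, (ii) is false under the weaker hypothesis, so no case-by-case check based on $y\notin C$ can succeed.

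Here is a counterexample. Take $C=L_1\cup C_2\cup L_3$ of type $C_{1,2,1}$ and put $W=\langle L_1,C_2\rangle\cong\P^3$. Let $y\in(W\cap Q)\setminus C$. For \emph{every} hyperplane $H\ni y$, the plane $H\cap W$ contains $y$, the two points of $H\cap C_2$, and the point of $H\cap L_1$. So the strict transform of the conic $H\cap W\cap Q\subset S_Q$ is a $(-1)$-curve of $S$ through $x$. This happens even though $C$ spans $\P^4$, so your non-triviality argument for the third kind of curve fails.

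Such points $y$ are excluded exactly by $x\notin\tilde{\chi}(E)$. Indeed, $W\cap Q$ is a quadric surface on which $L_1+C_2$ has bidegree $(2,1)$; it is smooth, e.g., for the curves of Remark~\ref{remark:fibres-description}, where $W$ is cut out by $x_0=0$. Hence $W\cap Q$ is swept out by lines of $Q$ meeting $C$ twice. Their strict transforms are contracted by the linear system of quadrics through $C$, so they lie in $\tilde{\chi}(E)$. The missing idea is the paper's. Since $y$ lies on no secant line of $C$, projection from $y$ maps $C$ isomorphically onto a curve $C'\subset\P^3$ whose trisecant lines form a one-dimensional family. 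The planes of $\P^3$ containing one of them then fill at most a surface in $\Pi_y\cong\P^3$.

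Part (i) also has holes. Your third step is the actual content of (i), but it is only asserted. Saying that no two points of $H\cap C$ lie on a ruling of $S_Q$ means that a general $H\ni y$ contains no secant line of $C$ lying in $Q$. The paper proves this by an incidence count: such lines form a one-parameter family, and the hyperplanes through $y$ containing a fixed one form a $\P^1$.

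Your list of conditions is also incomplete. ``No three on a line'' already follows from ``no two on a ruling''. What you additionally need is that the four points are not coplanar in $\langle S_Q\rangle\cong\P^3$. Otherwise, with $h_1,h_2$ the ruling classes and $e_1,\dots,e_4$ the exceptional curves, the strict transform of the $(1,1)$-curve through them has class $h_1+h_2-e_1-e_2-e_3-e_4$. That is a $(-2)$-curve, so $-K_S$ is not ample. The paper gets non-coplanarity for every hyperplane containing no component of $C$ from two vanishings: $H^1(\mathcal{J}_C)=0$, because $C$ is connected and reduced, and $H^0(\mathcal{J}_C(1))=0$, because $C$ is nondegenerate.

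Your treatment of the ruling-type $(-1)$-curves in (ii) matches the paper's.
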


\begin{proof}
    First we prove $(\mathrm{i})$. Since $Q$ has isolated singularities,  the image $\pi(S)$ of $S$ on $Q$ is smooth by Bertini's theorem. Since $C$ is reduced and $S$ is general, then $\pi(S)$ intersects $C$ transversally in four points; hence $S$ is smooth and moreover $(-K_S)^2=4$. We show that $-K_S$ is ample. Since $S$ is the blow-up of a hyperplane section $H\cong\P^1\times\P^1$ along $\Sigma:=H\cap C$, we must show that:
    \begin{enumerate}
        \item No two points of $\Sigma$ lie on a line in $H$.  
        \item $\Sigma$ is not contained in a plane.
    \end{enumerate}
    For $(1)$, observe that hyperplane sections of $Q$ containing $\pi(x)$ are naturally parametrised by $\mathcal{P}\cong\P^3$. We must show that for a general $H\in \mathcal{P}$, $H$ does not contain any secant of $C$ (we understand ``secant line" to include the case of tangent lines). Under the involution $\tilde{\chi}$ described in Section \ref{section:families} (or $\tilde{\chi}_0$ in the case where $C=C_{1,2,1}$ or $C_{1,1,1,1}$), the secants and line-components of  of curve $C$ are naturally parametrised by a dense open subset $U\subset C$. Let $\mathcal{I}\subset \mathcal{P}\times U$ be the incidence subvariety: 
    $$\mathcal{I}=\bigg\{\big(H, L)\,\bigg|\,H\in \mathcal{P}, L\subset H,  L \text{ a secant of }C\bigg\}.$$
    Let $\mathrm{pr}_1 \colon\mathcal{I}\to \mathcal{P}, \mathrm{pr}_2 \colon \mathcal{I}\to U$ be the natural projections. The subvariety $\mathrm{pr}_1(\mathcal{I})$ of $\mathcal{P}$ parametrises hyperplane sections of $Q$ containing a secant of $C$. For a point $[L]\in U$ the fibre $\mathrm{pr}_2^{-1}([L])$ parametrises hyperplanes containing $\pi(x)$ and a given secant, and hence $\mathrm{pr}_2^{-1}([L])\cong \P^1$ since $\pi(x)$ is not contained in a secant of $C$. Therefore, the fibres of $\mathrm{pr}_2$ are all isomorphic to $\P^1$, hence $\mathrm{dim}(\mathcal{I})=2$ since $\mathrm{dim}(U)=1$, and so $(1)$ follows.
    
    For $(2)$, let $\Pi\subset\P^4$ be the ambient hyperplane whose restriction to $Q$ is $H$, so that $\Pi\cap C=\Sigma$. Since $\Pi$ contains no component of $C$, its defining equation is a non-zero-divisor on $\O_C$. The hyperplane restriction exact sequence therefore gives
    $$0\longrightarrow\mathcal{J}_C\longrightarrow\mathcal{J}_C(1)\longrightarrow\mathcal{J}_{\Sigma/\Pi}(1)\longrightarrow 0.$$
    Here $\mathcal{J}_C$ denotes the ideal sheaf of $C$ in $\P^4$, and $\mathcal{J}_{\Sigma/\Pi}$ denotes the ideal sheaf of $\Sigma$ in $\Pi$ pushed forward to $\P^4.$ The group $H^0(C,\O_C)=\C$ since $C$ is connected, reduced and projective, hence $H^1(\P^4, \mathcal{J}_C)$ vanishes. Moreover, the group $H^0(\P^4, \mathcal{J}_C(1))$ vanishes since $C$ is non-degenerate, and so $H^0(\Pi, \mathcal{J}_{\Sigma/\Pi}(1))=0$. Therefore, $\Sigma$ does not span a plane in $\Pi$. \\ 

    Now we prove $(\mathrm{ii})$. We must show that:
    \begin{enumerate}
        \item The four lines joining $\pi(x)$ to each point of $\Sigma$ do not lie in $Q$;
        \item $\pi(x)$ is not contained in the plane spanned by any three points of $\Sigma$. \\ 
    \end{enumerate}
   
    For $(1)$, note that the union of lines in $Q$ meeting ${\pi(x)}$ is given by $Q\cap T_{\pi(x)} Q$. This surface meets $C$ in four points, so there exist just four lines in $Q$ through $\pi(x)$ which meet $C$. A general hyperplane section $H$ of $Q$ containing $\pi(x)$ does not contain any of these lines, so $(1)$ is proven. 
    
    For $(2)$, let $\P^4\dashrightarrow\P^3$ be the projection from ${\pi(x)}$ and observe that there is a natural isomorphism between $\mathcal{P}$ and $|\O_{\P^3}(1)|$. Moreover since ${\pi(x)}$ lies on no secant of $C$, $C$ maps to an isomorphic curve in $\P^3$. So fix $H\in\mathcal{P}$, and suppose $H$ contains a plane through $\pi(x)$ and three points of $C$. Projecting to $\P^3$, the image of $H$ is a plane in $\P^3$ containing a trisecant $L$ of $C\subset\P^3$. We claim that if $H$ is general then this cannot happen.

    For this, note that trisecants of $C$ are parametrised by a one-dimensional variety. Indeed, the linear system $|\mathcal{I}_{C/\P^3}(3)|$ has base locus equal to $C$, and defines a birational map $\P^3\dashrightarrow Y\subset\P^6$ which contracts trisecants of $C$ onto a curve $Z\subset Y$. Now let us define the incidence subvariety:

    $$\mathcal{I}=\bigg\{\big(H, L)\,\bigg|\,H\in |\O_{\P^3}(1)|, L\subset H,  L \text{ a trisecant of $C_4$}\bigg\}.$$
    Let $\mathrm{pr}_1 \colon\mathcal{I}\to |\O_{\P^3}(1)|, \mathrm{pr}_2 \colon \mathcal{I}\to Z$ be the natural projections. Then the fibres of $\mathrm{pr_2}$ are 1-dimensional, so that $\mathrm{dim}(\mathcal{I})=2$, and hence $\mathrm{dim}(\mathrm{pr}_1(\mathcal{I}))\leq2$. Thus, we see that a general plane in $\P^3$ does not contain a trisecant of $C$. 
    \end{proof}
    
\begin{cor}
    \label{cor:delta}
    With the notation of Lemma \ref{lemma:delpezzo}, we have that $\delta_x(X)>1$. 
\end{cor}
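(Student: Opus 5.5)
We will run the Abban--Zhuang method with the flag $x\in S\subset X$, where $S$ is the surface from Lemma~\ref{lemma:delpezzo}. By the Abban--Zhuang estimate,
$$\delta_x(X)\ \geq\ \min\left\{\frac{1}{S(-K_X;S)},\ \delta_x\big(S;W^S_{\bullet,\bullet}\big)\right\},$$
where $W^S_{\bullet,\bullet}$ is the refinement of $-K_X$ by $S$. So it suffices to bound each term separately. This is the same template used for smooth members of \textnumero2.21 in \cite{book}. The point is that Lemma~\ref{lemma:delpezzo} places $x$ and $S$ in exactly the situation handled there, and the singularities of $Q$ and the reducibility of $C$ play no role near $x$.

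\textbf{Step 1: the first term.} Write $-K_X\sim 3H-E$ with $S\sim H$. For $u\in[0,1]$ the class $-K_X-uS=(3-u)H-E$ is nef, since it is a positive combination of $H$ and $2H-E$. For $u\in[1,3/2]$ we would use the Zariski decomposition given by the contraction $\chi$. Concretely, $(3-u)H-E$ lies on the ray through $H'=2H-E$ when $u=1$ and stops being big at $u=3/2$; indeed $\chi(E)\sim 3H-2E$, so $2(-K_X-uS)$ has negative part a multiple of $\chi(E)$. The intersection numbers $H^3=2$, $H^2E=0$, $HE^2=-4$, $E^3=-\deg N_{C/Q}$ are identical to the smooth case, because the computation is numerical on the blow-up of a quadric along a degree four, arithmetic genus $0$ lci curve. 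Computing $S(-K_X;S)=\frac{1}{28}\int_0^{3/2}\mathrm{vol}(-K_X-uS)\,du$ gives a value strictly less than $1$, as in the smooth case (roughly $1/2$).

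\textbf{Step 2: the refined term.} For $u\in[0,1]$ the restriction is $P(u)|_S=-K_S+(1-u)H|_S$. For $u\in[1,3/2]$ we subtract the negative part restricted to $S$, which is a union of exceptional curves of $S$; these are the conics $\chi(E)\cap S$. By Lemma~\ref{lemma:delpezzo}(ii), $x$ lies on no $(-1)$-curve of the degree $4$ del Pezzo surface $S$. We then take a further flag: a general smooth curve $x\in Z\subset S$, chosen in $|H|_S|$ (a conic through $x$) or in a pencil of conics avoiding the $(-1)$-curves. Applying Abban--Zhuang again gives
$$\delta_x\big(S;W^S\big)\geq\min\left\{\frac{1}{S(W^S;Z)},\ \frac{1}{S(W^{S,Z};x)}\right\}.$$
Both quantities are explicit integrals over $u\in[0,3/2]$ and $v$, computed from the intersection theory of $S$. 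Since $x$ is off every $(-1)$-curve, no negative part of the Zariski decomposition on $S$ passes through $x$, so the point term contains only the $(P\cdot Z)^2$ contribution.

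\textbf{Main obstacle.} The main obstacle is doing this bookkeeping of Zariski decompositions on $S$ for $u\in[1,3/2]$ and checking that both estimates are strictly below $1$. The difficulty is not conceptual, since the numerics coincide with those for a general point of a smooth member of \textnumero2.21. I therefore expect to quote the corresponding computation from \cite{book} (the section on family \textnumero2.21), verify that its only inputs are items (i) and (ii) of Lemma~\ref{lemma:delpezzo} together with the intersection numbers above, and conclude $\delta_x(X)>1$.
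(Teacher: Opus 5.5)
Your proposal follows essentially the same route as the paper. The paper also takes the flag through the general surface $S\in|H|$ containing $x$, which by Lemma~\ref{lemma:delpezzo} is a smooth quartic del Pezzo surface with $x$ off its $(-1)$-curves, and then repeats the smooth-case computation of \cite[Lemma 14]{K-STAB} verbatim to obtain the bound $\frac{112}{111}$.

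One correction to your Step 2 concerns the range $u\in[1,\frac{3}{2}]$. There the restricted negative part is $(u-1)\chi(E)|_S$ with $\chi(E)|_S\sim 3H|_S-2E|_S$. This class has self-intersection $2$ and anticanonical degree $4$, and for a smooth member or $C=C_4$ it is an irreducible rational curve. It is neither conics nor a union of $(-1)$-curves. So the reason it misses $x$ is the hypothesis $x\notin\mathrm{Ex}(X)$. Lemma~\ref{lemma:delpezzo}(ii) is what you need for a different purpose: to control the negative parts of the surface Zariski decompositions of $P(u)|_S-vZ$.
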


\begin{proof}
    Let $S$ be the pullback of a general hyperplane section of $Q$ containing $x$. Then $S$ is a smooth del Pezzo surface of degree 4 and $x$ does not lie on any $(-1)$-curve on $S$ by Lemma \ref{lemma:delpezzo}, so that by following the proof of \cite[Lemma 14]{K-STAB} verbatim we see that $\delta_x(X)=\frac{112}{111}>1.$
\end{proof}

\begin{lemma}
    \label{lemma:1ODP}
    Suppose that $Q\subset \P^4$ is an irreducible quadric hypersurface, and let $\pi\colon X\to Q$ be the blow-up along a curve $C$ which is either a rational normal quartic curve or a curve of type $C_{1,2,1}$ as described in Theorem \ref{theorem:main}. Suppose that $\mathrm{Sing}(X)$ consists of a single ordinary double point $x$ not contained in $\mathrm{Ex}(X)$. Let $f\colon\wt{X}\to X$ be the blow-up at $x$, $E$ be the exceptional divisor, and $\wt{Z}\subset E\cong\P^1\times\P^1$ an irreducible curve which is not a fibre of either projection to $\P^1$. Then $\delta_{\wt{Z}}(f^*(-K_X))>1$.   
\end{lemma}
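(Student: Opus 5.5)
We use the Abban--Zhuang method with the flag $\wt{Z}\subset E\subset\wt{X}$, and we prove $\delta_{\wt Z}(f^*(-K_X))>1$ through the refined estimate
$$\delta_{\wt{Z}}\big(f^*(-K_X)\big)\geq\min\left\{\frac{A_X(E)}{S_X(E)},\ \frac{1}{S(W^E_{\bullet,\bullet};\wt{Z})}\right\}.$$
Since $x$ is an ordinary double point, $A_X(E)=2$ and $E\cong\P^1\times\P^1$ with $E|_E\sim -\ell_1-\ell_2$, where $\ell_1,\ell_2$ are the rulings. Because $x\notin\mathrm{Ex}(X)$, the point $\pi(x)$ is the vertex of $Q$, so $Q$ is a cone over a smooth quadric surface and $C$ lies away from the vertex.

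\textbf{Step 1: the Zariski decomposition of $f^*(-K_X)-uE$.} Write $H$ for the pullback of the hyperplane class of $Q$ and $E_C$ for the exceptional divisor over $C$, so that $-K_X\sim 3H-E_C\sim H+H'$ with $H'=2H-E_C$. On $\wt X$ the class $f^*H-E$ is the pullback of the hyperplane class under projection from the vertex. That projection maps $\wt X$ to $\P^1\times\P^1$, away from the locus over $C$. The class $f^*H'-E$ plays the analogous role for $\chi(x)$, which is also a node of $X$ lying away from the exceptional divisors. From this we get that $f^*(-K_X)-uE$ is nef for $u\in[0,1]$, and we track when it stops being effective.

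To do this, intersect with the curves that become negative. These are the strict transforms of the lines through the vertex meeting $C$. Since $C$ has degree $4$, there are finitely many families of such lines, namely the cones over the points of $C$ projected to the quadric surface. Their classes in $E$ are the rulings, so after a threshold some ruling-swept surface $F$ must be subtracted in the negative part. We compute $\mathrm{vol}(f^*(-K_X)-uE)$ piecewise via $(f^*(-K_X)-uE)^3=28-2u^3$ on the nef range, and then correct for the fixed part. The pseudo-effective threshold should be $u=2$, since $f^*(-K_X)-2E\sim(f^*H-E)+(f^*H'-E)$ lies on the boundary. From this we obtain $S_X(E)<2$, which gives $A_X(E)/S_X(E)>1$.

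\textbf{Step 2: the refined term $S(W^E_{\bullet,\bullet};\wt Z)$.} Restricting $P(u)|_E$ gives $u(\ell_1+\ell_2)$ on the nef range, and a class $a(u)\ell_1+b(u)\ell_2$ after $N(u)$ appears. The formula is
$$S(W^E_{\bullet,\bullet};\wt Z)=\frac{3}{28}\int_0^{\tau}\Big(P(u)|_E\cdot P(u)|_E\cdot \tfrac{1}{?}\Big)\ldots$$
More precisely, $\frac{3}{28}\int_0^\tau\int_0^\infty\mathrm{vol}(P(u)|_E-v\wt Z)\,dv\,du$, plus the contribution $\frac{3}{28}\int_0^\tau (P(u)\cdot P(u)\cdot E)\,\mathrm{ord}_{\wt Z}(N(u)|_E)\,du$. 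Because $\wt Z$ is not a fibre of either ruling, its class is $\alpha\ell_1+\beta\ell_2$ with $\alpha,\beta\ge1$. Then $\mathrm{vol}(P(u)|_E-v\wt Z)$ is at most its value for the smallest case $\wt Z\sim\ell_1+\ell_2$. Also $N(u)|_E$ is supported on rulings, so $\wt Z$ is not contained in it and the correction term vanishes. The whole computation therefore reduces to the single case $\wt Z\in|\ell_1+\ell_2|$, an integral of $2(a-v)(b-v)$. We expect a value around $1/2$, well below $1$.

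\textbf{Main obstacle.} The hard step is pinning down the negative part $N(u)$ uniformly for both $C_4$ and $C_{1,2,1}$, since the configuration of lines through the vertex meeting $C$ depends on $C$. We would first check by explicit intersection numbers, using the cone structure and the involution $\chi$ that swaps the two nodes, that the volume only depends on degree data of $C$. Failing that, we would bound $\mathrm{vol}$ from above by its nef-range formula extended up to $u=2$. Such an overestimate should still keep both quotients above $1$.
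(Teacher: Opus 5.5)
Your framework matches the paper's: the Abban--Zhuang estimate for $\wt Z\subset E\subset\wt X$, $A_X(E)=2$, $E|_E\sim-\ell_1-\ell_2$, and the reduction to $\wt Z\sim\ell_1+\ell_2$ by monotonicity of the volume. However, the Zariski decomposition you propose is wrong, and both terms of the estimate inherit the error.

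\textbf{Step 1.} Your remark on $f^*H'-E$ is right once you note that $\chi(x)=x$, since $x$ is the only singular point. So $f^*H'-E=\chi^*(f^*H-E)$ is nef, and $f^*(-K_X)-uE$ is nef on all of $[0,2]$, not just $[0,1]$. But then $(f^*(-K_X)-2E)^3=28-16=12>0$. This class is big, not on the boundary of the pseudo-effective cone, so the threshold is $3$, not $2$.

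The curves that become negative for $u>2$ are indeed the strict transforms of lines through $\pi(x)$ meeting $C$. Each of them meets $E$ in a point, not along a ruling, and together they sweep out the strict transform $\wt V$ of the cone over $C$ with vertex $\pi(x)$. The missing input is this: because $\pi(x)$ lies on no secant of $C$, projection from $\pi(x)$ embeds $C$ in $E\cong\P^1\times\P^1$ as a curve of bidegree $(1,3)$. Write $H_1,H_2$ for the strict transforms of the two families of planes through the vertex, so $f^*H-E=H_1+H_2$. Then $\wt V\sim H_1+3H_2-E_C$, and for $u\in[2,3]$:
\begin{itemize}
\item $N(u)=(u-2)\wt V$;
\item $P(u)=(5-u)H_1+(3-u)(3H_2+E-E_C)$;
\item $\mathrm{vol}=12(3-u)^2$.
\end{itemize}
Hence $S(f^*(-K_X);E)=13/7$. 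Integrating only up to $u=2$ gives $12/7$, which underestimates $S$. That is the wrong direction, and the true margin is only $14/13$. Your fallback does not help either: beyond the nef range, $28-2u^3$ is not an upper bound for the volume. It drops below $12(3-u)^2$ immediately after $u=2$ and is negative before $u=3$.

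\textbf{Step 2.} The same error breaks this step. Here $N(u)|_E=(u-2)\,\wt V|_E$, and $\wt V|_E$ is the image of $C$. It is a reduced curve of class $\ell_1+3\ell_2$, not a union of rulings, and it can contain $\wt Z$. Indeed $\wt Z=\wt V\cap E$ is $G$-invariant and not a fibre, so it is exactly the kind of curve to which the lemma is applied in Lemma~\ref{lemma:isolated}.

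So the correction term does not vanish. You need reducedness of $\wt V|_E$ to get $\mathrm{ord}_{\wt Z}(N(u)|_E)\le u-2$, and this term then contributes $\frac{3}{28}\cdot4\int_2^3(6-2u)(u-2)\,du=\frac17$. Combined with $P(u)|_E\sim2\ell_1+(6-2u)\ell_2$ on $[2,3]$, one obtains $S(W_{\bullet,\bullet};\wt Z)\le 9/14$, not your ``around $1/2$''.

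This also dissolves your main obstacle. The decomposition depends only on the bidegree $(1,3)$ of the projected curve, which holds for both $C_4$ and $C_{1,2,1}$. For $C_{1,2,1}$, the two disjoint lines go to two rulings of the same family and the conic goes to a $(1,1)$-curve, so the image is still reduced.
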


\begin{proof}
     By \cite[Theorem 3.3]{AZ22} we have that $$\delta_{\wt{Z}}(f^*(-K_X))\geq\mathrm{min}\bigg\{\frac{A_X(E)}{S(f^*(-K_X);E)},\frac{1}{S(W_{\bullet,\bullet};{\wt{Z}})}\bigg\}.$$
     We have that $A_X(E)=2$. Denote by $V\subset Q$ the surface obtained by taking the closure of the union of lines in $Q$ which connect $\pi(x)$ to a point of $C$, and let $\wt{V}\subset \wt{X}$ be its strict transform. Since $x\notin\mathrm{Ex}(X)$, the point $\pi(x)$ lies on no secant line of
$C$ (we understand ``secant line" to include the case of tangent lines). Therefore, projection from $\pi(x)$ induces an isomorphism
of $C$ onto a curve in
$
E\simeq\P^1\times\P^1
$
of bidegree $(1,3)$ or $(3,1)$ (without loss of generality, we assume the former case). It follows that $\wt{V}\sim H_1+3H_2-F$, where $F$ is the exceptional divisor over $C$ and $H_1,H_2$ are strict transforms of the two generators of $\mathrm{Cl}(Q)$. We have the following Zariski decomposition $f^*(-K_X)-uE=P(u)+N(u)$ where 
     \begin{align*}
         P(u)&\sim_{\mathbb{R}}\begin{cases}
         3(H_1+H_2)+(3-u)E-F, & u\in[0,2],  \\ 
         (5-u)H_1+(3-u)(3H_2+E-F),&u\in[2,3],
         \end{cases} \\ 
         N(u)&=\begin{cases}
         0, &u\in[0,2], \\ 
         (u-2)\wt{V}, &u\in[2,3],
         \end{cases} 
     \end{align*}
     which has the property that

     $$\mathrm{vol}\big(f^*(-K_X)-uE\big)=\mathrm{vol}\big(P(u)\big)=P(u)^3$$
    for every $u\in[0,3]$. Before continuing with the computation, let us first justify this decomposition. 
    The divisor $P(2)\sim f^*(-K_X)-2E$ is nef because its complete linear system corresponds to double projection from $x\in X$ under the anticanonical embedding. Thus $P(u)$ is nef for $u\in [0,2]$, since it can be written as a convex sum of the nef divisors $P(0)$ and $P(2)$. For $u\in [2,3]$, we may write:
     $$P(u)=(3-u)(f^*(-K_X)-2E)+2(u-2)H_1,$$
     which shows that $P(u)$ is nef since $H_1$ is. Moreover, the birational morphism given by the linear system $|P(2)|$ contracts the strict transforms of (anticanonical) conics in $X$ containing $x$ which intersect $E$ transversely in one point. By the definition of $V$, these curves span the surface $\wt{V}$, which implies that $\wt{V}$ is exceptional for the map corresponding to $|P(2)|$. Moreover, we have that $H_1$ is trivial on the strict transforms of these conics, and hence is pulled back from a Cartier divisor on the base. Thus, we have that for $u\in[2,3]$:
     \begin{align*}
        \mathrm{vol}\big(f^*(-K_X)-uE\big)&=\mathrm{vol}\big((3-u)P(2)+2(u-2)H_1+(u-2)\wt{V}\big) \\ 
        &=\mathrm{vol}\big((3-u)P(2)+2(u-2)H_1\big) \\ 
        &=\big((3-u)P(2)+2(u-2)H_1\big)^3
     \end{align*}
     We have the following intersection numbers on $\wt{X}$:
     $$H_1^3=0,H_2^3=0,H_1^2H_2=0,H_1H_2^2=0,F^3=-10,EF^2=0,E^2F=0,E^3=2,$$
     $$E^2H_1=-1,E^2H_2=-1,EFH_1=0,EFH_2=0,EH_1^2=0,EH_2^2=0,EH_1H_2=1,$$
     $$F^2H_1=-3,F^2H_2=-1,FH_1^2=0,FH_2^2=0,FH_1H_2=0.$$
     Then:
     \begin{align*}
         S(f^*(-K_X); E)&=\frac{1}{\mathrm{vol}(-K_X)}\int_0^\infty\mathrm{vol}(f^*(-K_X)-uE)du \\ 
         &=\frac{1}{28}\bigg(2\int_0^2(14-u^3)du+12\int_2^3(u-3)^2du\bigg) \\ 
         &=\frac{13}{7}.
     \end{align*}
     Note that  $f^*(-K_X)-uE$ is big for $0\leq u<3$ but $\mathrm{vol}(f^*(-K_X)-3E)=0$, which implies that the pseudo-effective threshold of $f^*(-K_X)$ with respect to $E$ is equal to 3.
     
     Now we compute $S(W_{\bullet,\bullet};\wt{Z})$. Denote by $\mathbf{h}_1$ and $\mathbf{h}_2$ the restrictions of $H_1$ and $H_2$ to $E$ (i.e. the two rulings of $E\cong\P^1\times \P^1$). Then we have the following equivalences on $E$:
     \begin{align*}
         E|_E&\sim -\mathbf{h}_1-\mathbf{h}_2, \\ 
         P(u)|_E&\sim_\mathbb{R}\begin{cases}
         u\mathbf{h}_1+u\mathbf{h}_2, & u\in[0,2], \\ 
         2\mathbf{h}_1+(6-2u)\mathbf{h}_2,& u\in[2,3], \\ 
     \end{cases}    \\
     N(u)|_E &= \begin{cases}
     0, & u\in[0,2], \\ 
     (u-2)\wt{V}|_E, & u\in[2,3].
     \end{cases}
     \end{align*}
     Now, since ${\wt{Z}}$ is assumed to be not a fibre of either projection to $\P^1$, we have that the difference ${\wt{Z}}-\mathbf{h}_1-\mathbf{h}_2$ is effective, and hence that $$\mathrm{vol}\big(P(u)|_E-v{\wt{Z}}\big)\leq\mathrm{vol}\big(P(u)|_E-v\mathbf{h}_1-v\mathbf{h}_2\big).$$ 
     Similarly, since $\wt{V}|_E$ is a reduced curve linearly equivalent to $\mathbf{h}_1+3\mathbf{h}_2$ we have that $\mathrm{ord}_{\wt{Z}}\big(\wt{V}|_E\big)\leq1$. Then we have that:
     \begin{align*}
    S(W_{\bullet,\bullet};{\wt{Z}}) &= \frac{3}{28}\int_0^3(P(u)|_E)^2\mathrm{ord}_{\wt{Z}}\big(N(u)|_E\big)du +\frac{3}{28}\int_0^3\int_0^\infty\mathrm{vol}\big(P(u)|_E-v{\wt{Z}}\big)dvdu \\      
      &\leq\frac{3}{28}\Bigg(4\int_2^3(6-2u)(u-2)du 
         +2\int_0^2\int_0^u(u-v)^2 dvdu \\
         &+ 2\int_2^3\int_0^{6-2u}(2-v)(6-2u-v) dvdu\Bigg) \\ 
     &=\frac{9}{14}.
     \end{align*}
Thus 
    \begin{align*}
        \delta_{\wt{Z}}(f^*(-K_X))&\geq\mathrm{min}\bigg\{\dfrac{A_X(E)}{S(f^*(-K_X);E)},\dfrac{1}{S(W_{\bullet,\bullet};{\wt{Z}})}\bigg\} \\ 
        &\geq \mathrm{min}\bigg\{\dfrac{14}{13},\dfrac{14}{9}\bigg\} \\ 
        &>1.
    \end{align*}
\end{proof}

\subsection{Proofs}

With these results prepared, we are ready to prove K-polystability of the singular fibres.

\begin{lemma}
\label{lemma:isolated}
    Let $X$ be a singular fibre of one of the families $\{\mathscr{X}_{ij}\to U_{ij}\}$, and suppose that $X$ has isolated singularities. Then $X$ is K-polystable.
\end{lemma}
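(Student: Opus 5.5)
The plan is to use the equivariant Fujita--Li valuative criterion in its Abban--Zhuang form. For each singular fibre $X$ with isolated singularities, choose a reductive subgroup $G\subset\mathrm{Aut}(X)$ and show that $\delta_Z(X)>1$ for every $G$-invariant irreducible subvariety $Z\subset X$. The cases come from Lemma \ref{lemma:Fanoness} and Remark \ref{remark:fibres-description}:
\begin{enumerate}
\item $\mathrm{Bl}_{C_4}Q^\mathrm{iso}$, with one ordinary double point;
\item $\mathrm{Bl}_{C_{2,2}}Q^\mathrm{iso}$, with two ordinary double points;
\item $\mathrm{Bl}_{C_{1,2,1}}Q^\mathrm{sm}$;
\item $\mathrm{Bl}_{C_{1,2,1}}Q^\mathrm{iso}$;
\item $\mathrm{Bl}_{C_{1,1,1,1}}Q^\mathrm{iso}$, with four ordinary double points.
\end{enumerate}
In each case $G$ is generated by the involution $\tilde{\chi}$ (or $\tilde{\psi}_{ij}$), the finite group $\Z_2^2$ (or the stabiliser $\Gamma_{p_1}$) acting fibrewise as in Lemma \ref{lemma:T-orbit-relation}, and, where present, the one-dimensional torus from Remark \ref{remark:fibres-description}(2). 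With this choice, $G$-invariant points and curves are few and can be written down explicitly. For any $G$-invariant point $x$ that is smooth and lies off $\mathrm{Ex}(X)$, Corollary \ref{cor:delta} gives $\delta_x(X)=\frac{112}{111}>1$. This settles every $Z$ whose general point is such a point.

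The remaining centres are of two kinds. The first kind is the singular points. At a singular point $x$, which lies off $\mathrm{Ex}(X)$ because the singularities of $Q$ avoid $C$, let $f\colon\wt{X}\to X$ be the blow-up at $x$. Every divisor over $X$ with centre $x$ has centre on $\wt{X}$ equal to $E$, to a point of $E$, or to a curve in $E$. For the divisor $E$ itself, Lemma \ref{lemma:1ODP} gives $A/S=14/13$. For an irreducible curve in $E$ that is not a ruling, Lemma \ref{lemma:1ODP} gives $\delta>1$ directly. Rulings of $E$ and points of $E$ need a further flag $E\supset \wt Z\ni p$, with the same Zariski decomposition and refinement computations.

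The argument for the rulings depends on the symmetry available. When $\tilde{\chi}$ or a group element swaps the two rulings of $E\cong\P^1\times\P^1$, no $G$-invariant ruling exists. When $G$ contains a torus, only finitely many invariant curves and points arise, and the remaining flags can be computed by hand. Lemma \ref{lemma:1ODP} is stated for a single node. For several nodes (cases 2 and 5), I will redo its computation with the extra nodes ignored: they lie away from $x$, so the Zariski decomposition near $E$ is unchanged, up to checking that $\wt V$ avoids the other nodes or passes through them harmlessly. The second kind of remaining centre is a curve or point in $\mathrm{Ex}(X)$, or a $G$-invariant curve through it. For these I will take the flag $X\supset \mathrm{Ex}$-component $\supset$ curve. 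The $G$-action (in particular $\tilde\chi$, which swaps $E$ and $\chi(E)$) reduces this to a few explicit invariant curves, such as the lines of $C$ and the fibres over the nodes of $C$. For each such flag I will compute $S$ via the Zariski decomposition of $-K_X-uE$ on the blow-up of a quadric, and show that the relevant ratios exceed $1$.

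The main obstacle will be case-by-case control of the invariant centres in $\mathrm{Ex}(X)$ for the reducible curves $C_{1,2,1}$ and $C_{1,1,1,1}$, where the exceptional divisor is reducible and the Zariski decompositions change. I would first try to use symmetry to eliminate as many centres as possible, so that very few explicit Abban--Zhuang estimates remain. Once $\delta_G(X)>1$ is established, K-polystability follows by \cite{ZZ}. For fibres with a torus action, one also has to check that the Futaki invariant vanishes, and this follows from $\tilde\chi$ inverting the torus.
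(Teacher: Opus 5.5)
There is a genuine gap, and it sits exactly at what you call the main obstacle. Your overall framework matches the paper's: the equivariant valuative criterion with $G\supseteq(\Z_2\times\Z_2)\rtimes\langle\tilde\chi\rangle$, Corollary~\ref{cor:delta} at smooth points off $\mathrm{Ex}(X)$, and Lemma~\ref{lemma:1ODP} at an invariant node off $\mathrm{Ex}(X)$. But part of your plan rests on a false premise: singular points need not lie off $\mathrm{Ex}(X)$. The nodes of $X$ over the nodes of $C_{2,2}$, $C_{1,2,1}$, $C_{1,1,1,1}$ lie on $E$. Moreover, $\mathrm{Sing}(Q)\cap C=\varnothing$ only puts the vertex node off $E$, not off $\tilde\chi(E)$. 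For example, for $\mathrm{Bl}_{C_{2,2}}Q^{\mathrm{iso}}$ over $[a:b:0]$, the vertex of $Q$ is $[0:1:0:0:0:0]$. This is a rank-two matrix, so it lies on the secant cubic, whose strict transform is $\tilde\chi(E)$. Hence $\tilde\chi$ swaps the two nodes, and neither node is $G$-invariant.

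So rerunning Lemma~\ref{lemma:1ODP} in cases 2 and 5 targets the wrong points, and it could not work there anyway. That proof uses $x\notin\mathrm{Ex}(X)$ precisely to know that $\pi(x)$ lies on no secant of $C$. This is what makes $\wt V|_E$ a $(1,3)$-curve and produces the Zariski decomposition. In the paper, a node off $\mathrm{Ex}(X)$ occurs only for $\mathrm{Bl}_{C_4}Q^{\mathrm{iso}}$ and $\mathrm{Bl}_{C_{1,2,1}}Q^{\mathrm{iso}}$, namely the vertex. There, the invariant rulings and points of the exceptional quadric, which you defer to further flags, are excluded by symmetry. $G$ preserves both rulings and acts on their bases without fixed points. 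So the only $G$-invariant centres on the quadric are the quadric itself and non-ruling curves, which is exactly the input of Lemma~\ref{lemma:1ODP}. Your shortcuts are not available here: for a general $\mathrm{Bl}_{C_4}Q^{\mathrm{iso}}$ there is no torus and no ruling swap.

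The more serious gap is your treatment of centres in $\mathrm{Ex}(X)$. You propose flags $X\supset(\text{component of }\mathrm{Ex}(X))\supset\text{curve}$ but compute nothing. For reducible $C$ these flags are awkward even to set up, since the components of $E$ pass through nodes of $X$ where they are not $\Q$-Cartier. The curves you expect to be invariant are also not invariant:
\begin{itemize}
\item In $\mathrm{Bl}_{C_{1,2,1}}Q^{\mathrm{sm}}$, the involution $[x_0:x_1:x_2:x_3:x_4:x_5]\mapsto[x_4:x_3:x_2:x_1:x_0:x_5]$ swaps the two lines and the two nodes of $C$.
\item In $\mathrm{Bl}_{C_{2,2}}Q^{\mathrm{iso}}$, the fibre of $E$ over the node of $C$ contains the node of $X$ that $\tilde\chi$ sends off $E$, so that fibre is not $\tilde\chi$-invariant.
\end{itemize}

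The missing idea is that no computation is needed. Since $\tilde\chi$ swaps $E$ and $\tilde\chi(E)$, any irreducible $G$-invariant $Z\subset\mathrm{Ex}(X)$ lies in $E\cap\tilde\chi(E)$. The $\Z_2\times\Z_2$-action then rules out every candidate. For $C_{1,2,1}$, for instance, $Z$ would have to satisfy three conditions:
\begin{itemize}
\item it lies over the conic, because the two lines are swapped;
\item it dominates the conic, because the Klein group acts on it without fixed points;
\item it is a component of the part of $E\cap\tilde\chi(E)$ over the conic.
\end{itemize}
That part consists of two sections: the directions of the lines in $Q$ joining a point of the conic to the first line of $C$, respectively the second. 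The involution above interchanges these two sections, so no such $Z$ exists. This is the paper's observation that for the four reducible curves there are no $G$-invariant irreducible subvarieties of $\mathrm{Ex}(X)$ at all. For $\mathrm{Bl}_{C_4}Q^{\mathrm{iso}}$, the centres in $\mathrm{Ex}(X)$ are handled by running the proof of \cite[Lemma 15]{K-STAB} verbatim. Without these inputs your proposal does not establish $A_X(D)>S(-K_X;D)$ for $G$-invariant $D$ centred in $\mathrm{Ex}(X)$, so it does not prove the lemma.
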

\begin{proof}
    Observe that the threefold $X$ is of one of the following five types:
    $$\mathrm{Bl}_{C_4}(Q^\mathrm{iso}),\quad\mathrm{Bl}_{C_{2,2}}(Q^\mathrm{iso}), \quad \mathrm{Bl}_{C_{1,2,1}}(Q^\mathrm{sm}), \quad \mathrm{Bl}_{C_{1,2,1}}(Q^\mathrm{iso}), \quad \mathrm{Bl}_{C_{1,1,1,1}}(Q^\mathrm{iso}).$$
    In each case, as described previously, there is a subgroup $G\subset \mathrm{Aut}(X)$ isomorphic to
    $$\big(\Z_2\times\Z_2\big)\rtimes\langle\tilde{\chi}\rangle.$$
    We will show that for every $G$-invariant prime divisor $D$ over $X$ we have that
    $$A_X(D)>S(-K_X;D),$$
    which implies that $X$ is K-polystable by \cite{Fujita2019,ZZ}. So fix a $G$-invariant prime divisor $D$ over $X$, and let $Z$ be its centre on $X$. 
    
    Suppose first that $X\cong \mathrm{Bl}_{C_4}(Q^\mathrm{iso})$. After a linear change of coordinates we may assume that the quadric $Q^\mathrm{iso}$ is given by an equation of the form:
    $$\mu(x_2x_4-x_3^2+x_0x_2-x_1^2)+\lambda(x_0x_4-x_2^2)-x_1x_3+x_2^2=0,$$
    the curve $C_4$ is given by the parametrisation $t\mapsto [1:t:t^2:t^3:t^4]$, and the group $G\cap \mathrm{Aut}(Q^\mathrm{iso},C_4)\cong \Z_2\times\Z_2$ acts on the quadric by the transformations:
    \begin{align*}
        [x_0:x_1:x_2:x_3:x_4]&\mapsto[x_4:x_3:x_2:x_1:x_0], \\ 
        [x_0:x_1:x_2:x_3:x_4]&\mapsto[x_0:-x_1:x_2:-x_3:x_4].
    \end{align*}
    In this case, let $f\colon \wt{X}\to X$ be the blow-up at $\mathrm{Sing}(X)=\{x\}$ with exceptional divisor $E$. One checks that the group $G$ acts on $E$ without fixed-points, in such a way that the two rulings $E\to \P^1$ are preserved. It follows that any $G$-invariant irreducible subvariety of $E$ is a curve which is not a fibre of either ruling (since $G$ acts without fixed points on the bases of these rulings) or is $E$ itself. 
    
    Suppose first that $Z=x$; let $\wt{Z}\subseteq E$ be its centre on $\wt{X}$. Then:
    $$\frac{A_X(D)}{S(-K_X;D)}\geq \delta_Z(-K_X)=\delta_{\wt{Z}}(f^*(-K_X))>1$$
    by Lemma \ref{lemma:1ODP}. Suppose instead that $Z$ lies inside $\mathrm{Ex}(X)$. Then following the proof of \cite[Lemma 15]{K-STAB} verbatim gives the desired inequality $A_X(D)>S(-K_X;D)$. 

    We may therefore assume that there exists a point $x\in Z\setminus(Z\cap\mathrm{Ex}(X))$ such that $X$ is smooth at $x$. Then:
    $$\frac{A_X(D)}{S(-K_X;D)}\geq\delta_x(-K_X)>1$$
    by Corollary \ref{cor:delta}. It follows that $X$ is K-polystable.\\ 

    Now suppose that $X$ is one of the remaining four isomorphism types. Then one checks that in each case there are no $G$-invariant irreducible subvarieties contained in $\mathrm{Ex}(X)$. Thus there exists a point $x\in Z\setminus (Z\cap\mathrm{Ex}(X))$. If $x$ is a singular point of $X$, then since $x\notin\mathrm{Ex}(X)$ we must have that $X\cong\mathrm{Bl}_{C_{1,2,1}}(Q^\mathrm{iso}) $ and $\pi(x)$ is the unique singular point of $Q^\mathrm{iso}$; then after a linear change of coordinates we may assume that the quadric $Q^\mathrm{iso}$ is given by the equation 
    $$x_0x_2+x_2x_4 + x_0x_4 -x_1x_3 +x_2^2 = 0,$$
    and $G\cap \mathrm{Aut}(Q^\mathrm{iso}, C_{1,2,1})$ acts on $Q^\mathrm{iso}$ as previously described. Then by arguing as in the case $\mathrm{Bl}_{C_4}(Q^\mathrm{iso})$, we obtain
    $$A_X(D)>S(-K_X;D).$$
    Thus, we may assume that there exists a point $x\in Z\setminus (Z\cap\mathrm{Ex}(X))$ which is a smooth point of $X$, so that:
    $$\frac{A_X(D)}{S(-K_X;D)}\geq \delta_x(-K_X)>1$$
    by Corollary \ref{cor:delta}. Thus
$$
A_X(D)>S(-K_X;D)
$$
for every $G$-invariant prime divisor $D$ over $X$. The equivariant
valuative criterion therefore implies that $X$ is K-polystable.
\end{proof}

It remains to prove K-polystability of the fibre with non-isolated singularities:
\begin{lemma}
\label{lemma:non-isolated}
    Let $X$ be a singular fibre of one of the families $\{\mathscr{X}_{ij}\to U_{ij}\}$, and suppose that $X$ has non-isolated singularities. Then $X$ is K-polystable.
\end{lemma}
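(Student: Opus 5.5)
The unique fibre in question is, up to isomorphism, $X=\mathrm{Bl}_{C_{2,2}^\mathrm{nred}}(Q^\mathrm{niso})$. I would prove K-polystability with the equivariant valuative criterion (\cite{Fujita2019,ZZ}), exactly as in Lemma \ref{lemma:isolated}, but using a larger symmetry group.

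\textbf{Step 1: the group.} This fibre is the one singled out in Section~\ref{section:families}. It sits over $\Delta_3\cap\{\text{the torus-invariant line}\}$, and is invariant under a one-dimensional torus $\mathbb{G}_m\subset\mathrm{PGL}_3(\C)$ of the form $\mathrm{diag}(1,\lambda,\lambda^2)$ (up to conjugation). I would take
$$G=\big(\mathbb{G}_m\rtimes\Z_2\big)\rtimes\langle\tilde{\chi}\rangle\subset\mathrm{Aut}(X),$$
where $\Z_2$ is the coordinate swap $x_i\leftrightarrow x_{4-i}$ inverting the torus. Since $\mathrm{Aut}^0(X)\supseteq\mathbb{G}_m$ is reductive, K-polystability reduces to showing
$$A_X(D)>S(-K_X;D)$$
for every $G$-invariant prime divisor $D$ over $X$. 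Here we should work with the $\mathbb{G}_m$-equivariant criterion of \cite{ZZ}, which also requires $\mathrm{Fut}$ to vanish on the torus. The Futaki vanishing follows because the $\Z_2$ factor acts on the torus by inversion.

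\textbf{Step 2: locating the centre $Z$.} I would classify the $G$-invariant irreducible subvarieties of $X$. The candidates are:
\begin{itemize}
\item the two singular curves $Z_1$ and $Z_2$ from Lemma \ref{lemma:Fanoness}; note that $\tilde{\chi}$ should swap them, since $\tilde{\chi}$ swaps $E$ and $E'$, so their union is the natural invariant locus;
\item finitely many torus-fixed curves and surfaces, such as the strict transforms of $T$-invariant hyperplane sections.
\end{itemize}
If $Z$ contains a smooth point of $X$ lying outside $\mathrm{Ex}(X)$, I would adapt Corollary \ref{cor:delta}: take a general hyperplane section through $x$. It is still a smooth quartic del Pezzo, because the general hyperplane section misses the vertex line. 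Also $C_{2,2}^\mathrm{nred}$ meets a general hyperplane in a non-reduced length-4 scheme, so I would instead choose the section to be general among those through $x$ and check the $\delta_x$ estimate directly. Alternatively, I would use the $\mathbb{G}_m$-action to move $x$ into a torus-invariant surface and apply Abban--Zhuang with a flag $X\supset S\supset L$, where $S$ is the torus-invariant surface $\pi^{-1}(\text{hyperplane through the vertex line})$.

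\textbf{Step 3: the main obstacle, $Z\subseteq Z_1\cup Z_2$ or $Z\subset\mathrm{Ex}(X)$.} Using $\tilde{\chi}$, it suffices to treat centres inside $\pi^{-1}(\mathrm{Sing}\,Q^\mathrm{niso})=Z_1$. I would blow up $Z_1$. Transversally this is an $A_1$ singularity, so the blow-up $f\colon\wt X\to X$ has exceptional divisor $F$ with $A_X(F)=1$, and $F$ is a conic bundle over $Z_1\cong\P^1$.

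First I would compute $S(-K_X;F)$ via a Zariski decomposition. The natural negative part is the strict transform of the cone over $C_{2,2}^\mathrm{red}$, the analogue of the surface $\wt V$ in Lemma \ref{lemma:1ODP}. I expect $S(-K_X;F)<1$.

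Then I would apply \cite[Theorem 3.3]{AZ22} with the refinement on $F\cong\F_n$ or $\P^1\times\P^1$. The possible $G$-invariant curves in $F$ are torus-orbit closures: sections, and fibres over the two torus-fixed points of $Z_1$. The $\Z_2$ swaps these two fixed points, so no single fibre is invariant. That leaves only sections or all of $F$, and for these I would bound $S(W_{\bullet,\bullet};\wt Z)<1$ by intersection-number integrals as in Lemma \ref{lemma:1ODP}.

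The hardest part will be getting the Zariski decomposition on $\wt X$ right when $u$ is large. The non-isolated, non-terminal geometry makes nefness harder to verify, and the cubic scroll $S\cong\F_1$ containing $C_{2,2}^\mathrm{nred}$ will likely enter the negative part as well.
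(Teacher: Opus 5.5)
You have a genuine gap, and it starts with the choice of group. This fibre lies over a vertex of the triangle $\{abc=0\}$, say $H=\{x_0=0\}$. There the quartic is the image of the double line $\{x^2=0\}\subset\P^2$. By the identity $\mathrm{adj}(MAM^{\mathrm{T}})=\det(M)^2(M^{-1})^{\mathrm{T}}\mathrm{adj}(A)M^{-1}$, every $M\in\mathrm{PGL}_3(\C)$ with $Me_1,M^{\mathrm{T}}e_1\in\C e_1$ preserves both $H$ and $\tilde\chi(H)$. So $X$ carries a faithful $\mathrm{GL}_2(\C)$-action. On $\P^4$ this is the representation $\mathrm{Sym}^2\oplus\mathrm{Std}$, which preserves $Q^{\mathrm{niso}}$, the conic $C=C_{2,2}^{\mathrm{red}}$ with its double structure, the vertex line $L$ and the cubic scroll $S$. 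Your $\mathbb{G}_m$ is only a subtorus of its $2$-dimensional maximal torus.

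This larger group is what makes the paper's proof short. $Q^{\mathrm{niso}}$ has only four $\mathrm{GL}_2(\C)$-orbits (Lemma~\ref{lemma:Q invariants}), so once $\tilde\chi$ is added the only $G$-invariant centre is the curve $Y=E^{\mathrm{red}}\cap\wt{S}$.

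With your group $(\mathbb{G}_m\rtimes\Z_2)\rtimes\langle\tilde\chi\rangle$ there are infinitely many invariant curves and surfaces through general points, and Step~2 does not handle them:
\begin{itemize}
\item A hyperplane section cannot miss the vertex line, since every hyperplane of $\P^4$ meets a line. So every hyperplane section of $Q^{\mathrm{niso}}$ is singular, and it meets $C_{2,2}^{\mathrm{nred}}$ in a non-reduced scheme.
\item Hence Lemma~\ref{lemma:delpezzo} and Corollary~\ref{cor:delta} do not transfer.
\item Your alternative flag $X\supset S\supset L$ is never set up or estimated.
\end{itemize}

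Step~3 aims at the wrong centre. As you observe, $\tilde\chi$ swaps the disjoint curves $Z_1$ and $Z_2$. Hence no $\tilde\chi$-invariant irreducible centre lies in $\mathrm{Sing}(X)$ at all, and blowing up $Z_1$ is unnecessary.

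Conversely, invariant centres in $\mathrm{Ex}(X)$ do not reduce to centres in $\pi^{-1}(L)=Z_1$. The support of $\mathrm{Ex}(X)$ is $E^{\mathrm{red}}\cup\wt{S}$, and $\tilde\chi$ exchanges these two components, so any such centre lies in $Y$. Moreover $Y$ is invariant under your group, because every automorphism of $X$ permutes the two extremal contractions and hence the pair $\{E^{\mathrm{red}},\wt{S}\}$. So $Z=Y$ is a case you must treat, and your proposal contains no estimate for it.

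The paper handles it as follows.
\begin{itemize}
\item $Y\cong C$ avoids $\mathrm{Sing}(X)$ and $(X,E^{\mathrm{red}})$ is plt, so \cite[Theorem 3.3]{AZ22} applies to the flag $E^{\mathrm{red}}\supset Y$.
\item The pseudo-effective threshold of $-K_X$ with respect to $E^{\mathrm{red}}$ is $2$, with negative part $2(u-1)\wt{S}$ on $[1,2]$. This gives $S(-K_X;E^{\mathrm{red}})=19/28$.
\item On $E^{\mathrm{red}}\cong\F_1$ one has $Y\sim\mathbf{s}+\mathbf{f}$, and the refined computation gives $S(W_{\bullet,\bullet};Y)=19/28$.
\item Therefore $\delta_Y(-K_X)\ge 28/19>1$.
\end{itemize}
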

Before proving this, let us first describe some aspects of the geometry of this threefold $X$. By construction $X$ is isomorphic to a variety of the form $\mathrm{Bl}_{C_{2,2}^\mathrm{nred}}(Q^\mathrm{niso})$. Up to a change of coordinates, the quadric $Q^\mathrm{niso}$ may be given in $\P^4$ by the equation
$$x_0x_4-x_2^2 = 0 ,$$
and the curve $C_{2,2}^\mathrm{nred}$ is a non-reduced curve with the double structure described in Theorem \ref{theorem:main} supported on the conic $C_{2,2}^\mathrm{red}$ given by:
$$x_1=x_0x_4-x_2^2=x_3=0.$$
The group $\mathrm{GL}_2(\C)$ acts on $\P^4$ via the five-dimensional representation:
\[
\begin{pmatrix}
a & b\\
c & d
\end{pmatrix}
\mapsto
\begin{pmatrix}
a^2 & 0 & 2ab & 0 & b^2\\
0 & a & 0 & b & 0\\
ac & 0 & ad+bc & 0 & bd\\
0 & c & 0 & d & 0\\
c^2 & 0 & 2cd & 0 & d^2
\end{pmatrix}.
\]
Under this $\mathrm{GL}_2(\C)$-action, the subschemes $C_{2,2}^\mathrm{red},C_{2,2}^\mathrm{nred}$ and $Q^\mathrm{niso}$ are preserved. Therefore the $\mathrm{GL}_2(\C)$-action lifts to a faithful action on $X$. Furthermore the cubic ruled surface $S$ introduced in Theorem \ref{theorem:main}, given in this coordinate system by:
$$x_0x_3-x_1x_2=x_1x_4-x_2x_3=x_0x_4-x_2^2 =0,$$
is $\mathrm{GL}_2(\C)$-invariant. Let $L=\mathrm{Sing}(Q^\mathrm{niso})$, which is the line given by $x_0=x_2=x_4=0$. It is also $\mathrm{GL}_2(\C)$-invariant. In fact, we have the following:
\begin{lemma}
\label{lemma:Q invariants}
    For the $\mathrm{GL}_2(\C)$-action on $Q^\mathrm{niso}$ described above, the only invariant proper irreducible subvarieties are:
    $$S, \quad C:=C_{2,2}^\mathrm{red},\quad  L.$$
\end{lemma}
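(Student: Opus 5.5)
The plan is to classify the $\mathrm{GL}_2(\C)$-orbits on $Q^\mathrm{niso}$. Since $\mathrm{GL}_2(\C)$ is connected, every invariant irreducible closed subvariety is the closure of a single orbit, namely the orbit of any point of its open orbit. So it suffices to show there are exactly four orbits, $C$, $L$, $S\setminus(C\cup L)$ and $Q^\mathrm{niso}\setminus S$, with closures $C$, $L$, $S$ and $Q^\mathrm{niso}$ respectively.

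First I identify the representation. Let $V=\C^2$ be the standard representation, with basis $e_1,e_2$. Then $\P^4=\P(\mathrm{Sym}^2V\oplus V)$. The coordinates $(x_0,x_2,x_4)$ are the coordinates on $\mathrm{Sym}^2V$, up to the harmless factor $2$ in the middle entry. The coordinates $(x_1,x_3)$ are the coordinates on $V$. The scalar matrices $\lambda I$ act with weight $2$ on the first summand and weight $1$ on the second. In these terms:
\begin{itemize}
\item $Q^\mathrm{niso}=\{x_0x_4=x_2^2\}$ is the cone with vertex $L=\P(V)$ over the conic $C=\{[u^2:0]\}\subset\P(\mathrm{Sym}^2V)$ of squares;
\item $C$ is the conic of squares $u^2$, $u\in V\setminus\{0\}$;
\item $S$ is the locus $\{[u^2:tu]\,:\,u\in V\setminus\{0\},\ t\in\C\}\cup L$.
\end{itemize}
For the last point, I will check that the three quadrics defining $S$ are exactly the $2\times2$ minors expressing that $(x_0,x_2,x_4)$ is a square $u^2$ with $(x_1,x_3)$ proportional to $u$, with $L$ as the limit when $u^2\to0$ relative to $tu$. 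This is a direct substitution $u=(a,b)$, giving $x=(a^2,ta,ab,tb,b^2)$.

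Next I compute the orbits on $Q^\mathrm{niso}$ using the projection from $L$, $p\colon Q^\mathrm{niso}\setminus L\to C$. This projection is $\mathrm{GL}_2(\C)$-equivariant, and $C\cong\P^1=\P(V)$ is a single orbit. The line $L\cong\P(V)$ is also a single orbit, and so is $C$ itself, since $C$ is the locus where $x_1=x_3=0$. For points off $L\cup C$, equivariance reduces the problem to the fibre over $[e_1^2]$. This fibre consists of the points $[e_1^2:v]$ with $v\ne0$. It is acted on by the Borel subgroup $B$ stabilising the line $\C e_1$.

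An element $\begin{pmatrix}a&b\\0&d\end{pmatrix}$ sends $e_1^2\mapsto a^2e_1^2$ and $v\mapsto Mv$. After rescaling the projective coordinate by $a^{-2}$, the action on $v$ is $v\mapsto a^{-2}Mv$. This action preserves the line $\C e_1$ and is transitive on $\C e_1\setminus\{0\}$, via the scalar $a^{-1}$. It is also transitive on $V\setminus\C e_1$, using $b$ and $d$ freely. Hence the fibre splits into two $B$-orbits, $\{v\in\C e_1\setminus 0\}$ and $\{v\notin\C e_1\}$. Sweeping these out by $\mathrm{GL}_2(\C)$, the first gives $S\setminus(C\cup L)$ and the second gives $Q^\mathrm{niso}\setminus S$.

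Finally I take closures. $S\setminus(C\cup L)$ is two-dimensional with closure $S$, which contains both $C$ (at $t=0$) and $L$. The open orbit is dense in $Q^\mathrm{niso}$. So the proper invariant irreducible subvarieties are exactly $S$, $C$ and $L$.

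The main obstacle I expect is bookkeeping rather than a conceptual difficulty. I need to confirm that the displayed $5\times5$ matrix really is $\mathrm{Sym}^2V\oplus V$ in these coordinates, including the factor $2$ and the placement of $ad+bc$. I also need to verify that the three given quadrics cut out precisely the closure of $\{[u^2:tu]\}$ and introduce no extra component. I would do the latter by parametrising and comparing degrees: this locus is a cubic scroll, ruled by the lines joining $[u^2:0]$ to $[0:u]$.
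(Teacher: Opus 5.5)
Your proposal is correct and is essentially the paper's argument: both prove the four-orbit decomposition $Q^\mathrm{niso}=(Q^\mathrm{niso}\setminus S)\sqcup(S\setminus(C\cup L))\sqcup C\sqcup L$ and read off the invariant irreducible subvarieties as orbit closures. The only difference is how transitivity is checked: the paper writes down explicit matrices sending points to $[1:1:0:0:0]$ and $[1:0:0:1:0]$, while you use the equivariant projection from $L$ and count Borel orbits on a single fibre. One small point: an invariant irreducible closed subvariety is an orbit closure because there are finitely many orbits (plus irreducibility), not because $\mathrm{GL}_2(\C)$ is connected as you state, but this does not affect the argument.
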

\begin{proof}
    We prove that $Q^\mathrm{niso}$ admits the finite $\mathrm{GL}_2(\C)$-orbit decomposition:
    $$Q^\mathrm{niso}=\big(Q^\mathrm{niso}\setminus S\big)\sqcup \big(S\setminus(C\cup L)\big)\sqcup C\sqcup L.$$
    Then the lemma follows from the fact that invariant subvarieties of $Q^\mathrm{niso}$ are unions of these orbits.

    By transitivity of the $\mathrm{GL}_2(\C)$-action on $L$ and $C$, this proves that $L$ and $C$ are both orbits. Now fix $p\in Q^\mathrm{niso}$, and write $p=[s^2:x_1:st:x_3:t^2]$ for some $s,t,x_1,x_3\in\mathbb C$. If $s=t=0$, then $p\in L$, while if $x_1=x_3=0$, then $p\in C$. We may therefore assume that
$$
(s,t)\neq(0,0)
\quad\text{and}\quad
(x_1,x_3)\neq(0,0).
$$
Substituting the point $p$ into the first two equations defining $S$, we get
$$
s^2x_3-stx_1=s(sx_3-tx_1)
$$
and
$$
x_1t^2-stx_3=-t(sx_3-tx_1),
$$
respectively. Then since $(s,t)\neq(0,0)$, it follows that
$$
p\in S
\quad\Longleftrightarrow\quad
sx_3-tx_1=0.
$$
Suppose first that $p\in S\setminus(C\cup L)$; then there exists $\lambda\in\C^*$ such that $x_1=\lambda s $ and $x_3=\lambda t.$ Choose
$$g=\begin{pmatrix}a & b \\ c & d\end{pmatrix}\in\mathrm{GL}_2(\C)$$
such that $as+bt=\lambda$ and $cs+dt =0.$ Then under the above $\mathrm{GL}_2(\C)$-action, we have that:
$$g\cdot p =[1:1:0:0:0].$$
Thus every point of $S\setminus(C\cup L)$ belongs to the orbit of $[1:1:0:0:0],$ which implies that $S\setminus(C\cup L)$ is a single orbit.

Suppose now that $p\in Q^\mathrm{niso}\setminus S$. Then $sx_3-tx_1\neq0,$
and hence the matrix
$$
\begin{pmatrix}
s&x_1\\
t&x_3\\
\end{pmatrix}
$$
is invertible. Therefore by choosing
$$
g=
\begin{pmatrix}
s&x_1\\
t&x_3\\
\end{pmatrix}^{-1}
\in\mathrm{GL}_2(\mathbb C)
$$
it follows that
$$
g\cdot p=[1:0:0:1:0].
$$
Thus every point of $Q^\mathrm{niso}\setminus S$ belongs to the orbit of $
[1:0:0:1:0],$ which implies that $Q^\mathrm{niso}\setminus S$ is a single orbit.
\end{proof}
 Let $G$ be the subgroup of $\mathrm{Aut}(X)$ given by $\mathrm{GL}_2(\C)\rtimes\langle\tilde{\chi}\rangle$, where $\mathrm{GL}_2(\C)$ acts on $X$ as above and $\tilde{\chi}$ is the involution of $X$ discussed previously. Let $E$ be the exceptional divisor of the blow-up $\pi\colon X\to Q^\mathrm{niso}$, let $E^\mathrm{red}$ be its reduced subscheme, and let $\wt{S}$ denote the strict transform of $S$ in $X$.
 
 We now describe the $G$-invariant proper subvarieties of $X$.
\begin{lemma}
\label{lemma:X invariants}
    Then the only $G$-invariant irreducible proper subvariety of $X$ is:
    $$Y:=E^\mathrm{red}\cap \wt{S}.$$
\end{lemma}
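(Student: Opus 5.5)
The plan is to use the $\mathrm{GL}_2(\C)$-equivariance of $\pi\colon X\to Q^\mathrm{niso}$ together with Lemma \ref{lemma:Q invariants}, and then to use $\tilde{\chi}$ to cut the list of $\mathrm{GL}_2(\C)$-invariant subvarieties down to one.

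Let $Z\subset X$ be a $G$-invariant irreducible proper subvariety. Then $\pi(Z)$ is an irreducible $\mathrm{GL}_2(\C)$-invariant subvariety of $Q^\mathrm{niso}$. By Lemma \ref{lemma:Q invariants}, either $\pi(Z)=Q^\mathrm{niso}$ or $\pi(Z)\in\{S,C,L\}$.

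If $\pi(Z)=Q^\mathrm{niso}$, then $Z=X$, because $\pi$ is birational and $Z$ is irreducible. This is excluded since $Z$ is proper.

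If $\pi(Z)=L$, then $Z$ is the strict transform of $L$, since $\pi$ is an isomorphism near $L\subset Q^\mathrm{niso}\setminus C$. This curve is the singular curve $Z_1$ from Lemma \ref{lemma:Fanoness}. The involution $\tilde{\chi}$ exchanges the two contractions of $X$, and I expect it to swap $Z_1$ with the curve $Z_2\subset E$. This should follow from the local description in the proof of Lemma \ref{lemma:Fanoness}, or from an explicit computation with $\chi_t$ at the relevant parameter. Since $Z_1\neq Z_2$, the curve $Z$ is not $G$-invariant.

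If $\pi(Z)=S$, then $Z=\wt S$. One checks that $\tilde{\chi}(\wt S)\neq\wt S$. The most direct way is to compute $\chi(S)$ explicitly; alternatively, one can compare classes in $\mathrm{Pic}(X)=\Z H\oplus\Z E$. The class of $\wt S$ is $2H-E$ or $2H-\tfrac12E$, depending on multiplicities. The involution $\tilde\chi$ acts on $\mathrm{Pic}(X)$ by $H\leftrightarrow 2H-E$. If $\wt S$ were invariant, its class would have to be fixed by this action, and I will check that it is not.

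If $\pi(Z)=C$, then $Z\subseteq E^\mathrm{red}$. The surface $E^\mathrm{red}$ itself is not $\tilde\chi$-invariant, because $\tilde\chi$ maps it into the other exceptional divisor. So $Z$ must be a $\mathrm{GL}_2(\C)$-invariant curve in $E^\mathrm{red}$ that dominates $C$. Over a point $p\in C$, the fibre of $E^\mathrm{red}$ is a line $\P(N_p)$. The stabiliser of $p$ in $\mathrm{GL}_2(\C)$, a Borel subgroup, acts on this line. I will compute this action and show that it has exactly one fixed point, namely the direction tangent to $S$, equivalently the direction determined by the double structure of $C_{2,2}^\mathrm{nred}$. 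If so, the only invariant curve over $C$ is $Y=E^\mathrm{red}\cap\wt S$. This curve coincides with $Z_2$, so it is $\tilde\chi$-invariant, being the second singular curve; this is consistent with the $L$ case. Checking that the intersection is reduced and irreducible also uses this fixed-point computation.

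The main obstacle is controlling $\tilde\chi$ explicitly. This means verifying both that $\tilde\chi(Z_1)=Z_2$ and that $\tilde\chi(Z_2)=Z_2$, that is, $\tilde\chi(Y)=Y$. I would first try to do this by writing $X$ as $H\cap\tilde\chi(H)$ in the relevant chart $\mathscr X_{ij}$ at the special parameter. The second singular curve of $\tilde\chi(X)=X$ must be the image of the first under $\tilde\chi$, since $\tilde\chi$ preserves the singular locus. This reduces the problem to showing that $\tilde\chi$ does not fix $Z_1$. That in turn holds because $Z_1$ is disjoint from $E$, whereas its image lies in $\tilde\chi(E)$.
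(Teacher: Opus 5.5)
\textbf{There is a genuine gap in the case $\pi(Z)=C$.} You claim that the stabiliser of $p\in C$ has exactly one fixed point on the fibre of $E^\mathrm{red}\to C$. In fact it has two.

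Locally $C_{2,2}^\mathrm{nred}=V(x^2,y)$ with $S=\{y=0\}$; at $p=[1:0:0:0:0]$ you can take $x=x_1$ and $y=x_3-x_1x_2$. The blow-up is then $\{\alpha y=\beta x^2\}$. Its fibre over a point of $C$ is $\P^1_{[\alpha:\beta]}$, which is $\P$ of $I/\mathfrak{m}_pI=\langle x^2,y\rangle$ and not $\P(N_{C/Q,p})$.
\begin{itemize}
\item The strict transform $\wt{S}$ lies in the smooth chart $\alpha\neq0$, where it is $\{\beta=0\}$. So $Y$ is the section $[1:0]$.
\item The chart $\beta\neq0$ is $\{\alpha y=x^2\}$, which is singular exactly along $\alpha=x=y=0$. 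So $Z_2$ is the section $[0:1]$.
\item An element $\begin{pmatrix}a&b\\0&d\end{pmatrix}$ of the stabiliser acts on $\langle x^2,y\rangle$ with weights $a^{-2}$ and $a^{-2}d$.
\end{itemize}
Hence $Y$ and $Z_2$ are two disjoint $\mathrm{GL}_2(\C)$-invariant curves over $C$. The paper itself uses $Y\cap\mathrm{Sing}(X)=\varnothing$ in the proof of Lemma~\ref{lemma:non-isolated}.

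So your identification $Y=Z_2$ is false. It also contradicts your own $L$-case: if $\tilde{\chi}(Z_1)=Z_2$, then, $\tilde{\chi}$ being an involution, $\tilde{\chi}(Z_2)=Z_1\neq Z_2$. As written, the proposal never excludes $Z_2$. It also gives no valid reason why $Y$ is $\tilde{\chi}$-invariant, which is part of the statement.

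\textbf{How to repair it along your lines.} The $\mathrm{GL}_2(\C)$-invariant irreducible curves of $X$ are exactly $Z_1$, $Y$ and $Z_2$.
\begin{itemize}
\item To see $\tilde{\chi}(Z_1)\neq Z_1$, use intersection numbers. Since $L\cap C=\varnothing$, we have $H\cdot Z_1=1$ but $\tilde{\chi}^*H\cdot Z_1=(2H-E)\cdot Z_1=2$.
\item Since $\tilde{\chi}$ preserves $\mathrm{Sing}(X)=Z_1\sqcup Z_2$, it swaps $Z_1$ and $Z_2$.
\item Since $\tilde{\chi}$ normalises $\mathrm{GL}_2(\C)$, it permutes $\{Z_1,Y,Z_2\}$, so it must fix $Y$.
\end{itemize}

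Two smaller points. Your closing sentence (``its image lies in $\tilde{\chi}(E)$'') is not right as stated. What is true is $Z_1\subset\wt{S}=\tilde{\chi}(E^\mathrm{red})$, and that already requires knowing $\tilde{\chi}(\wt{S})=E^\mathrm{red}$. Your candidate classes for $\wt{S}$ are also wrong: from $3H-E=-K_X=2E^\mathrm{red}+2\wt{S}$ and $E=2E^\mathrm{red}$ one gets $\wt{S}\sim_\Q\frac32H-E$. The conclusion $\tilde{\chi}(\wt{S})\neq\wt{S}$ still holds, because only multiples of $3H-E$ are $\tilde{\chi}$-fixed.

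\textbf{Comparison with the paper.} The paper avoids the fibrewise analysis entirely. It uses $\tilde{\chi}(\wt{S})=E^\mathrm{red}$ to force any $G$-invariant irreducible $Z\subseteq E^\mathrm{red}\cup\wt{S}$ into $E^\mathrm{red}\cap\wt{S}=Y$. It then shows that $\pi|_Y\colon Y\to C$ is an equivariant isomorphism, so $Y$ is a single orbit.
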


\begin{proof}
    Let $Z\subset X $ be a $G$-invariant irreducible proper subvariety; then $\pi(Z)$ is a $\mathrm{GL}_2(\C)$-invariant subvariety of $Q^\mathrm{niso}$ so that $\pi(Z)\subseteq S$ by Lemma \ref{lemma:Q invariants}. This implies that $Z \subseteq \big(\pi^{-1}(S)\big)^\mathrm{red}= E^\mathrm{red}\cup \wt{S}$. Since $Z$ is irreducible and $\langle\tilde{\chi}\rangle$-invariant, we have that $Z\subseteq Y$. It remains to show that $Y$ is integral and forms a single $G$-orbit. For this, note that since $S$ is smooth and the centre of the blow-up satisfies $C_{2,2}^\mathrm{nred}=2C$ as divisors on $S$, and is therefore Cartier, the restriction of $\pi$ to $\wt{S}$ is an isomorphism. It follows that the restriction of $\pi$ to $Y$ is a $\mathrm{GL}_2(\C)$-equivariant isomorphism onto $C$. Since $C$ is a smooth conic on which $\mathrm{GL}_2(\C)$ acts transitively, the lemma follows.
\end{proof}
    We can now complete the proof of K-polystability of $X=\mathrm{Bl}_{C_{2,2}^\mathrm{nred}}(Q^\mathrm{niso})$.

    \begin{proof}[Proof of Lemma \ref{lemma:non-isolated}]
    Let $G\subseteq\mathrm{Aut}(X)$ be the subgroup given above. Let $D$ be a $G$-invariant prime divisor over $X$ and let $Z$ be its centre. By Lemma \ref{lemma:X invariants}, $Z$ is equal to the scheme-theoretic intersection $Y={E^\mathrm{red}}\cap \wt{S}$. Let us estimate $\delta_{Z}(-K_X)$. For this we apply \cite[Theorem 3.3]{AZ22} to the flag ${E^\mathrm{red}}\supset Z$ to get
    $$\delta_Z(-K_X)\geq\mathrm{min}\bigg\{\frac{1}{S(-K_X;{E^\mathrm{red}})},\frac{1}{S(W_{\bullet,\bullet};Z)}\bigg\},$$
    where $W_{\bullet,\bullet}$ is the refinement by ${E^\mathrm{red}}$ of the complete graded linear series associated to $-K_X$. Here we use that fact that $(X,E^\mathrm{red})$ is a plt pair, and also that $Z$ is disjoint from $\mathrm{Sing}(X)$. The latter fact implies that the different $\mathrm{Diff}_{E^\mathrm{red}}(0)$ does not contain $Z$ in its support, and hence that the log discrepancy $A_{E^\mathrm{red},\mathrm{Diff}_{E^\mathrm{red}}(0)} (Z)$ is equal to 1.
    
    First let us compute $S(-K_X;{E^\mathrm{red}})$. Note that  $-K_X-u{E^\mathrm{red}}=(2-u){E^\mathrm{red}}+2\wt{S}$, so that the pseudo-effective threshold of $-K_X$ with respect to $E^\mathrm{red}$ is $u=2$. Then we have the following Zariski decomposition: $-K_X-u{E^\mathrm{red}}=P(u)+N(u)$, where

    \begin{align*}
    P(u)&\sim_\mathbb{R}
    \begin{cases}
 3H-\dfrac{(u+2)}{2}E, & u\in[0,1] \\ 
    \dfrac{3(2-u)}{2}H', & u\in [1,2]
    \end{cases} \\ 
	N(u) &= \begin{cases} 0, & u\in[0,1] \\ 
	2(u-1)\wt{S}, & u\in[1,2]
	\end{cases}
	\end{align*}

    Using the property $S(L; mD)=\frac{1}{m}S(L;D)$ for a $\Q$-Cartier divisor $D$, and the fact that $X$ is a flat degeneration of smooth Fano threefolds in family \textnumero2.21, then we may apply \cite[Lemma 15]{K-STAB} to get:

    $$S(-K_X; E^\mathrm{red})=\dfrac{19}{28}. $$

    Now let us compute $S(W_{\bullet,\bullet};Z)$. The surface $E^\mathrm{red}=\tilde{\chi}(\wt{S})$ is isomorphic to $\F_1$, hence its Picard group is generated by $\mathbf{s}$ and $\mathbf{f}$, where $\mathbf{s}$ denotes the class of the unique $(-1)$-curve on ${E^\mathrm{red}}$ and $\mathbf{f}$ denotes the class of a ruling. Then the class of the intersection $Z=E^\mathrm{red}\cap \wt{S}$ on ${E^\mathrm{red}}$ is $\mathbf{f}+\mathbf{s}$. We have the following Zariski decomposition for the restriction of $-K_X-u{E^\mathrm{red}}$ to ${E^\mathrm{red}}$:

    \begin{align*}
    P(u)|_{E^\mathrm{red}}&\sim_\mathbb{R}
    \begin{cases}
 (4-u)\mathbf{f}+\dfrac{u+2}{2}\mathbf{s}, & u\in[0,1], \\[10pt]
    3(2-u)(\mathbf{f}+\dfrac{\mathbf{s}}{2}), & u\in [1,2],
    \end{cases} \\
	N(u)|_{E^\mathrm{red}} &= \begin{cases} 0, & u\in[0,1], \\ 
	2(u-1)Z, & u\in[1,2],
	\end{cases}
	\end{align*}
and $P(u)|_{E^\mathrm{red}}-vZ$ is given by:

$$
    P(u)|_{E^\mathrm{red}}-vZ\sim_\mathbb{R}
    \begin{cases}
 (4-u-v)\mathbf{f}+(1+\frac{u}{2}-v)\mathbf{s}, & u\in[0,1], \\
    (6-3u-v)\mathbf{f}+(3-\frac{3}{2}u-v)\mathbf{s}, & u\in [1,2]
    \end{cases},$$\\
    which is nef if and only if it is pseudo-effective if and only if $v$ belongs to the interval:
    $$
    \begin{cases}
        \bigg[0,\dfrac{u+2}{2}\bigg], & u\in[0,1], \\[10pt]
        \bigg[0,\dfrac{6-3u}{2}\bigg], & u\in[1,2].
    \end{cases}
    $$
    Thus we compute 

    \begin{align*}
    S(W_{\bullet,\bullet};Z) &= \dfrac{3}{(-K_X)^3}\int_0^2\bigg( (P(u)|_{{E^\mathrm{red}}})^2\mathrm{ord}_ZN(u)|_{{E^\mathrm{red}}}+\int_0^\infty\mathrm{vol}(P(u)|_{{E^\mathrm{red}}}-vZ)dv\bigg)du \\ 
    &= \dfrac{3}{28}\Bigg(\frac{27}{2}\int_1^2(u-1)(u-2)^2du  +\frac{1}{4}\int_0^1\int_0^{\frac{u+2}{2}}(2+u-2v)(14-5u-2v)dvdu \\ &+\frac{1}{4}\int_1^2\int_0^{\frac{6-3u}{2}}(3u+2v-6)(9u+2v-18)dvdu\Bigg)  \\ 
    &= \frac{19}{28}.
    \end{align*}
    Thus, $\delta_Z(-K_X)\geq\dfrac{28}{19}>1$ so that $X$ is K-polystable.
\end{proof}

\section{Deformation theory of complete-intersection threefolds in $\wt{\P}^5$}
\label{section:deformation}
In this section, we complete the proof of Theorem \ref{theorem:main}. It remains only to prove that the space $M^\text{\textnumero 2.21}$ is normal, for which it is enough to prove that every threefold parametrised by $M^\text{\textnumero 2.21}$ has unobstructed deformations. Since every such threefold, other than $\mathrm{Bl}_{C_{2,2}^\mathrm{nred}}(Q^\mathrm{niso})$, has terminal singularities, and hence has unobstructed deformations, it remains to prove this for $\mathrm{Bl}_{C_{2,2}^\mathrm{nred}}(Q^\mathrm{niso})$. We prove the following slightly more general result:

\begin{lemma}
\label{lemma:unobstructed}
    Let $\pi\colon \wt{\P}^5\to \P^5$ denote the blow-up of $\P^5$ along a Veronese surface, with exceptional divisor $E$, and let $X\subset\wt{\P}^5$ be a reduced subscheme of the form $X=H\cap H'$, where  $H\in|\pi^*\O_{\P^5}(1)|$ and $H'\in|\pi^*\O_{\P^5}(2)-E|$. Then deformations of $X$ are unobstructed.
\end{lemma}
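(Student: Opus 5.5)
The plan is to show that the obstruction space $T^2_X$ vanishes, using the fact that $X$ is a complete intersection of two Cartier divisors in the smooth fivefold $P:=\wt{\P}^5$. Since $X$ is reduced and lci in $P$, the cotangent complex of $X$ is perfect in amplitude $[-1,0]$, represented by
$$\mathcal{N}^\vee_{X/P}\cong \O_X(-H)\oplus\O_X(-H')\longrightarrow \Omega_P|_X.$$
Hence $T^i_X=\mathrm{Ext}^i(L_X,\O_X)$ sits in the long exact sequence
$$\cdots\to H^1(X,T_P|_X)\to H^1(X,\O_X(H)\oplus\O_X(H'))\to T^2_X\to H^2(X,T_P|_X)\to\cdots.$$
It therefore suffices to prove two vanishings: $H^1(X,\O_X(H))=H^1(X,\O_X(H'))=0$, and $H^2(X,T_P|_X)=0$.

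\textbf{First vanishing.} I would use the Koszul resolution
$$0\to\O_P(-H-H')\to\O_P(-H)\oplus\O_P(-H')\to\O_P\to\O_X\to0,$$
twisted by $H$ or $H'$. The involution $\tilde{\chi}$ swaps the two classes, so it is enough to treat $H$. The resulting line bundles on $P$ are $\O_P$, $\O_P(H-H')$, $\O_P(-H')$ and $\O_P(H)$. Their cohomology is computed via $\pi_*$: one has $R\pi_*\O_P(aH-bE)=\O(a)\otimes\mathcal{I}_S^b$ for $0\le b\le 2$ (and a symmetric statement for negative $b$ with $R^1$ terms from the $\P^2$-bundle $E\to S$), together with Kodaira vanishing, since $-K_P=2H+2H'$ is ample. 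For instance $-H'=K_P+(2H+H')$ has no intermediate cohomology, and $H=K_P+(3H+2H')$, $H-H'=K_P+(3H+H')$ both involve ample twists, because $aH+bH'$ is ample for $a,b>0$. Chasing the two short exact sequences then gives the vanishing of $H^1$ and $H^2$ for the relevant pieces.

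\textbf{Second vanishing.} I would restrict the Euler-type description of $T_P$ to $X$. The blow-up gives $0\to T_P\to\pi^*T_{\P^5}\to \iota_*(\text{rank-}2\text{ bundle on }E)\to0$. Alternatively, one can note that $T_P(-K_P)$ is ample-ish and use the Koszul resolution again: $H^2(X,T_P|_X)$ is controlled by $H^2(T_P)$, $H^3(T_P(-H))$, $H^3(T_P(-H'))$ and $H^4(T_P(-H-H'))$. By Serre duality these become $H^{3}(\Omega_P(K_P+H))$ and similar groups, i.e.\ $H^{3}(\Omega_P(-H-2H'))$. I would attack these with Bott-type vanishing for $\wt{\P}^5$, which is a smooth projective spherical (indeed wonderful) $\mathrm{PGL}_3$-variety, the space of complete conics. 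For wonderful varieties one has $H^i(P,\Omega^j_P\otimes L)=0$ for $i>j$ when $L$ is ample, and more general Bott-type vanishing results are available. Failing that, I would compute directly with the sequence above, pushing forward to $\P^5$ and $S$.

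\textbf{Main obstacle.} The hard step is the second vanishing, $H^2(X,T_P|_X)=0$: it requires vanishing of twisted tangent cohomology on the complete-conics space in degrees where Kodaira vanishing alone does not suffice. I would first try to invoke the known result that $T_P$ is globally generated and $H^i(P,T_P\otimes\O(-D))$ vanishes in the required range, using the $\mathrm{PGL}_3$-homogeneous structure of the boundary divisors. Should this fail, an alternative route is to note that $X$ is a Gorenstein Fano with $H^i(\O_X)=0$ and local $T^1$ supported on curves, and to use the local-to-global sequence. However, the ambient lci approach avoids local-obstruction issues entirely, since $\mathcal{T}^2_X=0$ for lci $X$, so I would pursue it first.
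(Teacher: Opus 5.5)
Your framework is sound and is the Serre dual of the paper's argument. The exact sequence you derive from $L_X=[\mathcal{N}^\vee_{X/P}\to\Omega_P|_X]$ is correct. Your two target groups $H^1(X,\mathcal{N}_{X/P})$ and $H^2(X,T_P|_X)$ are dual to the paper's $H^2(X,\mathcal{N}^\vee_{X/P}\otimes\omega_X)$ and $H^1(X,\Omega^1_P|_X\otimes\omega_X)$. Your Koszul-plus-Kodaira argument for $H^1(X,\O_X(H))=H^1(X,\O_X(H'))=0$ is complete.

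The gap is the second vanishing, $H^2(X,T_P|_X)=0$. You leave it open and misdiagnose it as needing more than Kodaira-type vanishing. The Bott-type statement you quote, $H^i(\Omega^j_P\otimes L)=0$ for $i>j$, covers none of the four groups you need. For example, $H^2(P,T_P)=H^2(P,\Omega^4_P\otimes\O_P(2H+2H'))$ has $i=2<j=4$, and the other three have $(i,j)=(3,4),(3,4),(4,4)$. Your other fallbacks (global generation of $T_P$, pushing forward along $\pi$, the local-to-global sequence) are not carried out. So as written the proposal does not prove the second vanishing.

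The missing observation is that Akizuki--Nakano vanishing settles it at once. Since $T_P\cong\Omega^4_P\otimes\omega_P^{-1}=\Omega^4_P(2H+2H')$, your four groups are
$$H^2(P,\Omega^4_P(2H+2H')),\quad H^3(P,\Omega^4_P(H+2H')),\quad H^3(P,\Omega^4_P(2H+H')),\quad H^4(P,\Omega^4_P(H+H')).$$
Each has the form $H^q(P,\Omega^p_P\otimes L)$ with $p+q\geq 6>5=\dim P$. Each $L$ is ample, because $H,H'$ are nef and $H+H'$ is ample. Hence all four vanish.

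Equivalently, their Serre duals are $H^3(\Omega^1_P(-2H-2H'))$, $H^2(\Omega^1_P(-H-2H'))$, $H^2(\Omega^1_P(-2H-H'))$ and $H^1(\Omega^1_P(-H-H'))$. This is exactly the list the paper kills with Akizuki--Nakano. Note that the dual of $H^3(T_P(-H))$ sits in degree $2$, not $3$ as you wrote, though both vanish anyway.

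With this step inserted, your proof is complete and agrees with the paper's up to duality. Working directly with $L_X$ even spares you the paper's use of reducedness to identify $L_X$ with $\Omega_X$.
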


\begin{proof}
Since $X$ is a reduced local complete intersection (lci), the statement follows from \cite[Proposition 2.4.8]{DefTheory} provided that the group $\mathrm{Ext}_X^2(\Omega_X^1,\O_X)$ vanishes. By Serre duality, this is equivalent to showing that $H^1(X,\Omega_X^1\otimes\omega_X)=0$ (where we use the fact that $H$ and $H'$ are irreducible and never coincide, which implies that $X$ is lci and hence that the dualising sheaf $\omega_X$ of $X$ is invertible). For this, consider the conormal exact sequence for $X\subset\wt{\P}^5$:
$$0\longrightarrow\mathcal{N}_{X/\wt{\P}^5}^\vee\longrightarrow\Omega^1_{\wt{\P}^5}|_X\longrightarrow\Omega_X^1\longrightarrow 0,$$
where we again use the fact that $X$ is reduced and lci, along with smoothness of $\wt{\P}^5$. By tensoring with $\omega_X$ and taking the long exact sequence of cohomology, then to show that $H^1(X,\Omega_X^1\otimes\omega_X)=0$ it suffices to show that 
$$H^1(X, \Omega^1_{\wt{\P}^5}|_X\otimes\omega_X)=0$$
and
$$H^2(X, \mathcal{N}_{X/\wt{\P}^5}^\vee\otimes\omega_X)=0.$$
For this, consider the Koszul resolution of $\O_X$:
$$0\longrightarrow\O_{\wt{\P}^5}(-H-H')\longrightarrow\O_{\wt{\P}^5}(-H)\oplus\O_{\wt{\P}^5}(-H')\longrightarrow\O_{\wt{\P}^5}\longrightarrow\O_X\longrightarrow0.$$
Notice that $\omega_{\wt{\P}^5}\cong \O_{\wt{\P}^5}(-2H-2H')$ and $\mathcal{N}_{X/\wt{\P}^5}^\vee\cong \O_X(-H|_X)\oplus\O_X(-H'|_X)$, so that by adjunction $\omega_X\cong \O_X(-H|_X-H'|_X)$. To prove that $H^1(X, \Omega^1_{\wt{\P}^5}|_X\otimes\omega_X)$ (resp. $H^2(X, \mathcal{N}_{X/\wt{\P}^5}^\vee\otimes\omega_X)$) vanishes, then by tensoring the Koszul resolution by the locally free sheaf $\Omega^1_{\wt{\P}^5}(-H-H')$ (resp. $\O_{\wt{\P}^5}(-2H-H')\oplus\O_{\wt{\P}^5}(-H-2H') $) and considering the resulting two long exact sequences of cohomology, it is enough to show that
\begin{align*}H^1(\wt{\P}^5, \Omega^1_{\wt{\P}^5}(-H-H'))&=H^2(\wt{\P}^5, \Omega^1_{\wt{\P}^5}(-2H-H'))=H^2(\wt{\P}^5, \Omega^1_{\wt{\P}^5}(-H-2H'))\\
&=H^3(\wt{\P}^5,\Omega^1_{\wt{\P}^5}(-2H-2H'))=0\end{align*}
\begin{align*}
    \Bigg(\text{resp.  }&H^2(\wt{\P}^5, \O_{\wt{\P}^5}(-2H-H'))=H^2(\wt{\P}^5, \O_{\wt{\P}^5}(-H-2H'))=\\
    =&H^3(\wt{\P}^5, \O_{\wt{\P}^5}(-3H-H'))=H^3(\wt{\P}^5, \O_{\wt{\P}^5}(-2H-2H'))=
    H^3(\wt{\P}^5, \O_{\wt{\P}^5}(-H-3H'))=\\
    =&H^4(\wt{\P}^5,\O_{\wt{\P}^5}(-3H-2H'))=H^4(\wt{\P}^5,\O_{\wt{\P}^5}(-2H-3H'))=0\Bigg).
\end{align*}
But both sets of statements are true by Kodaira--Akizuki--Nakano vanishing since $\wt{\P}^5$ is smooth, $H$ and $H'$ are nef, and $H+H'$ is ample. 
\end{proof}

\begin{proof}[Proof of Theorem \ref{theorem:main}]
Let $T\to \P^2$ be the blow-up along the set $\Sigma$. Then by construction of the families $\{\mathscr{X}_{ij}\to U_{ij}\}$ and Lemmas \ref{lemma:smooth}, \ref{lemma:isolated} and \ref{lemma:non-isolated}, we obtain a moduli map
$$T\to M^{\text{\textnumero} 2.21}.$$
By Lemma \ref{lemma:T-orbit-relation}, the fibres of this map are given by $\Gamma$-orbits, where $\Gamma$ is the group described in Definition \ref{def:H}. This implies that the induced morphism
$$T/\Gamma \to M^{\text{\textnumero} 2.21}$$
is bijective, and moreover proper since $T/\Gamma$. Since $M^{\text{\textnumero} 2.21}$ is normal by \cite[Proposition 3]{Namikawa} and Lemma \ref{lemma:unobstructed}, then by Zariski's Main Theorem this map is an isomorphism. 

Under this isomorphism, the locus of negative curves $\Delta_1\cup \Delta_2\cup\Delta_3\subset T$  maps onto the divisors $\Delta_1,\Delta_2,\Delta_3\subset M^{\text{\textnumero} 2.21}$ which are described in the statement of Theorem \ref{theorem:main}.
\end{proof}

\printbibliography
\end{document}